\theoremstyle{plain} 
\newtheorem{theorem}             {Theorem}  [section]
\newtheorem{lemma}      [theorem]{Lemma}
\newtheorem{proposition}[theorem]{Proposition}
\theoremstyle{definition}
\newtheorem{definition} [theorem]{Definition}
\newtheorem{example}    [theorem]{Example}
\newtheorem{problem}    [theorem]{Problem}
\theoremstyle{remark}
\newtheorem{remark} [theorem]             {Remark}
\def\Art{\operatorname{Art}}
\def\CM{\operatorname{CM}}
\def\AL{\operatorname{AL}}
\def\Gal{\operatorname{Gal}}
\def\Aut{\operatorname{Aut}}
\def\GO{\operatorname{GO}}
\def\tr{\operatorname{tr}}
\def\Hom{\operatorname{Hom}}
\def\vol{\operatorname{vol}}
\def\O{\operatorname{O}}
\def\SO{\operatorname{SO}}
\def\PGL{\operatorname{PGL}}
\def\SL{\operatorname{SL}}
\def\res{\operatorname{res}}
\def\Pic{\operatorname{Pic}}
\def\GL{\operatorname{GL}}
\def\Jac{\operatorname{Jac}}
\def\mathbold{\mathbb}
\renewcommand{\Re}{\mathrm{Re}}
\renewcommand{\Im}{\mathrm{Im}}
\def\eps{\varepsilon}
\def\sec{\S}
\def\secs{\S\S}
\begin{document}

\title{Evaluating modular forms on Shimura curves}

\thanks{The author was supported
  by NSF grant OISE-1064866
  and partially supported by grant SNF-137488
  during the completion of this paper.}

\author{Paul D. Nelson}
\address{EPFL, Station 8, CH-1015 Lausanne, Switzerland}
\email{paul.nelson@epfl.ch}
\begin{abstract}
  Let $f$
  be a newform,
  as specified
  by its Hecke eigenvalues, on a Shimura curve $X$.
  We describe a method for evaluating
  $f$.
  The most interesting case is when $X$ arises as a compact
  quotient
  of the hyperbolic plane, so that classical $q$-expansions are
  not available.
  The method takes the form
  of an explicit, rapidly-convergent formula
  that is
  well-suited
  for numerical computation.
  We apply it to the problem of
  computing
  modular parametrizations of elliptic curves,
  and illustrate with some numerical examples.
\end{abstract}
\maketitle


\section{Introduction}

\subsection{Computing modular parametrizations}
\label{sec:motiv-numer-comp}
Let $E$ be an elliptic curve
over a totally real number field $F$.
By a \emph{modular parametrization} of $E$,
we mean a surjective morphism
\begin{equation}\label{eq:3}
  \Jac X_B \rightarrow E
\end{equation}
of abelian varieties over $F$,
where $X_B$
is the Shimura curve
attached to an order in a quaternion $F$-algebra
$B$ (see \secs\ref{sec:quaternion-algebras2} and
\ref{sec:newf-shim-curv}).
The images under \eqref{eq:3} of certain special divisors
on $X_B$ are called \emph{Heegner points}
(see \sec\ref{sec:setting}),
and provide some of the
most versatile examples of
analytic constructions of solutions
to algebraic equations
(see e.g. \cite{MR803364,GZ86,MR1110395,MR2083209,MR1322717,MR1826411}).




It is of interest to compute (numerically)
the curves $X_B$, the maps \eqref{eq:3},
and the images thereunder of special divisors.
Methods for doing so have been given in many, but not all, cases of interest.
When $X_B$ has very low genus (e.g., genus $0$),
one can work with an explicit algebraic description of
$X_B$ (see \cite{MR1726059,MR2282939});
in general, such a model becomes unmanageable and one must
resort to
analytic means (see \cite[\S 3.2]{darmon-rotger-aws11-notes}).
Methods for computing
Heegner points
using $p$-adic analysis
(see \cite{MR2254648, MR2282936} and references
therein)
apply when $B$ ramifies at \emph{some} finite place,
but there are cases of interest 
in which $B$ ramifies
at \emph{no} finite place,
such as when
$F \neq \mathbold{Q}$ has odd degree
and $E$ is everywhere unramified;
in that setting,
Darmon--Rotger \cite[\S3.2]{darmon-rotger-aws11-notes}
describe
as  ``completely open''
the problem of computing
Heegner points.
Even when $p$-adic methods apply,
it is of independent interest,
and described as an open problem by Greenberg
\cite[p.5]{MR2710023},
to obtain a uniform archimedean-analytic
method for computing Heegner points.

We are motivated by the latter problem of computing
Heegner points,
and more generally modular parametrizations (\ref{eq:3}), using archimedean
analysis.

\subsection{Evaluating modular forms}
\label{sec:relat-probl-comp}
It is classical that the map \eqref{eq:3} is given by
a certain holomorphic differential $f_B(z) \, d z$ on $X_B$,
in the sense that
the composition
\[
\Pic^0 X_B(\mathbold{C}) \cong  \Jac X_B(\mathbold{C}) \rightarrow E(\mathbold{C}) \cong 
\mathbold{C}/\Lambda
\]
of the Abel--Jacobi map
for $X_B$
with the inverse Weierstrass parametrization of $E$
should agree, up to isogeny,
with the linear extension of
\begin{equation*}
  [\tau_1] - [\tau_0]
  \mapsto \int_{\tau_0}^{\tau_1} f_B(z) \, d z.
\end{equation*}
A fundamental
result of Eichler--Shimura (see
\cite[\S7.4]{MR0314766},
\cite[\S1.4.9]{MR1826411},
\cite[\S10.3]{MR860139})
characterizes the weight two newform
$f_B$ (up to a normalizing scalar) by expressing
its Hecke eigenvalues
in terms of the arithmetic
of $E$ over finite fields.

Thus the
problem of computing modular parametrizations
reduces to that of evaluating 
a newform
of known Hecke eigenvalues
on a Shimura curve.
This problem is of independent interest.
For example, near the end of his paper
\emph{Shimura Curve Computations}, Elkies \cite[\S
5.5]{MR1726059} writes:
\begin{quote}
  The reader will note that so far we
  have said nothing about computing with \emph{modular forms} on
  Shimura curves.  Not only is this an intriguing question in its
  own right, but solving it may also allow more efficient
  computation of Shimura curves and the natural maps between them,
  as happens in the classical modular setting.
\end{quote}

We
distinguish three
senses in which one can ``compute''
the space of holomorphic modular forms
of given positive even integral weight on a Shimura curve $X_B$:
\begin{enumerate}
\item[(I)] as an abstract vector space.
\item[(II)] as an abstract Hecke module.
\item[(III)] as a concrete Hecke module realized as a space of differentials on $X_B$.
\end{enumerate}
Solutions to problem (I)
are given by classical dimension
formulas \cite[\S2.6] {MR0314766}
and to problem (II)
by some classical algorithms
involving
Brandt modules \cite{MR579066},
modular symbols \cite{MR2289048},
and their generalizations
(see \cite{MR2291849,MR2467859,MR2721432,MR2772112,
  MR2467860} and references therein).
In this paper,
we address the passage from a known solution of
problem (II)
to one of problem (III).

\subsection{The role of Fourier expansions}
\label{sec:role-four-expans}
If $F = \mathbold{Q}$ is the rational number field,
$B = M_2(\mathbold{Q})$ is the
matrix algebra,
and (for simplicity) $X_{M_2(\mathbold{Q})} = X_0(N)$
is the usual compactification of
$\Gamma_0(N) \backslash \mathbold{H}$
for some natural number $N$,
then $X_{M_2(\mathbold{Q})}$ has the cusp $\infty$ at which $f = f_{M_2(\mathbold{Q})}$ admits
a Fourier expansion
\begin{equation}\label{eq:5}
  f(z) = \sum_{n \in \mathbold{N} } a_f(n) q^n \quad
  \text{ with }q := e(z) := e^{2 \pi i z}, i = \sqrt{-1}
\end{equation}
for some complex coefficients $a_f(n)$,
where $a_f(1)$ is nonzero and explicitly normalized.
The ratios $a_f(n)/a_f(1)$,
which depend multiplicatively on $n$,
are determined explicitly by the Hecke eigenvalues of $f$,
and hence, via the Eichler--Shimura relations,
by computable invariants of $E$.
For example,
if $p$ is a prime not dividing $N$,
then $a_f(p)/a_f(1) = p+1 - \# E(\mathbold{F}_p)$.
Thus one may compute the map $\Jac X_{M_2(\mathbold{Q})}
\rightarrow E$
by summing the series
\[
\int_{\tau_0}^{\tau_1} f(z) \, d z
=
\frac{1}{2 \pi i}\sum_{n \in \mathbold{N} } \frac{a_f(n)}{n} \left( e( n \tau_1) - e( n \tau_0) \right)
\]
obtained
by integrating \eqref{eq:5} term-by-term
(see e.g. \cite{MR1322717, MR2473878}).\footnote{We suppress here discussion
  of certain technicalities, such as the
  choice
  of cusp at which to expand $f$.}

If
$B \not \cong M_2(\mathbold{Q})$,
then $X_B$ has no cusps
(equivalently,
the group $\Gamma$ has no parabolic elements),
so $f_B$ does not admit an expansion
of the type \eqref{eq:5}.
The absence of such an expansion
makes it
more difficult to evaluate
$f_B$;
in fact, we are not aware
of any successful attempts to do so
prior to the work described in this paper.

\subsection{Summary of results}
\label{sec:summary-results}
We will describe
\begin{enumerate}
\item[(A)] a general method for computing the values and line
  integrals
  of modular forms (and their derivatives) on Shimura curves $X_B$ using only
  archimedean analysis (see \sec\ref{sec:method-proofs}), and
\item[(B)] an explicit, ``ready-to-use'' formula,
  obtained by applying the method (A),
  in the simple but already interesting case that $B$ is an
  indefinite 
  quaternion algebra over $\mathbold{Q}$ and $X_B$ is attached
  to an Eichler order of squarefree level (see \sec\ref{sec:explicit-formula}).
\end{enumerate}
We apply the formula (B)
to several numerical examples
in \sec\ref{sec:numerical-examples},
so that \secs\ref{sec:explicit-formula},
\ref{sec:numerical-examples}
and \ref{sec:method-proofs} follow roughly
a ``theorem--application--proof'' paradigm.
Applications of the method (A) to other settings,
such as those arising from Shimura curves
attached to quaternion algebras
over totally real fields,
will be taken up in a future paper.
The method itself is general, and extends
to arbitrary automorphic
quotients attached to unit groups of quaternion algebras over number fields.

\begin{figure}
  \includegraphics{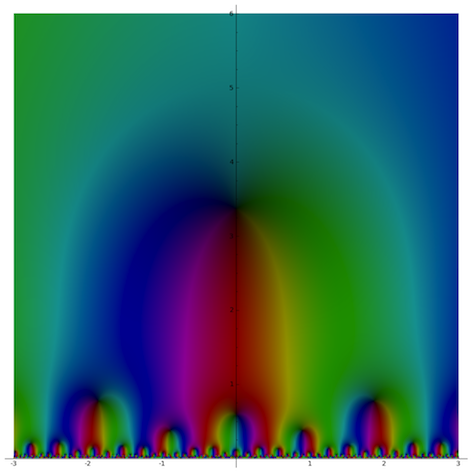}
  \caption{
    The ``first'' modular form on a compact
    Shimura curve:
    $\Im(z)^{4/2} f(z)$
    for $f : \mathbold{H} \rightarrow \mathbold{C}$ in the one-dimensional space
    $S_4(\Gamma_0^6(1))$,
    as depicted by SAGE's \emph{complex\_plot} command
    and evaluated via the method
    of this paper.
    Here $\Gamma_0^6(1) = (\sum_{i=1}^4 \mathbold{Z} e_i) \cap
    \SL_2(\mathbold{R})$,
    where
    $e_1 = [1,0;0,1]$,
    $e_2 = [1/\sqrt{2},1/\sqrt{2};3/\sqrt{2},-1/\sqrt{2}]$,
    $e_3 = [1/{2},1/{2};-3/{2},1/{2}]$,
    and
    $e_4 = [0,\sqrt{2};3\sqrt{2},0]$.
  }
\end{figure}

\subsubsection*{Explicit formula}
To sidestep the notational preliminaries
needed to state formula (B)
precisely,
we illustrate it here in the simplest
nontrivial case.
Let
$\Delta(z) := q
\prod_{n \in \mathbold{N} } (1-q^n)^{24}
= \sum_{n \in \mathbold{N} } a_\Delta(n) q^n
$
be the Ramanujan/discriminant function
on the upper half-plane
$\mathbold{H} := \{ z = x + i y: y > 0 \}$.
It satisfies the functional equation
$\Delta(-1/z) = z^{12} \Delta(z)$
and spans the space of cusp forms
of weight $12$ on $\SL_2(\mathbold{Z})$.
A famous theorem of Deligne
implies that
$|a_\Delta(n)| \leq n^{11/2} \tau(n)$
with $\tau(n)$
the number
of positive divisors of $n$.

One can evaluate $\Delta$ on the positive
imaginary
axis $z = i y$
by summing the series
\begin{equation}\label{eq:21}
\Delta(i y)
= \sum_{n \in \mathbold{Z}} a_\Delta(n) e^{- 2 \pi n y},
\end{equation}
which converges rapidly provided that $y$ is not too small.\footnote{
The latter condition
is not serious,
as  the functional equation
$\Delta(i/ y) = y^{1 2} \Delta(i y)$
allows one to assume that $y \geq 1$.
}
A very special case of Theorem \ref{thm:rough-main-2} is
a new
formula for $\Delta(i
y)$,
which, while visibly ``worse''
than \eqref{eq:21},
has the virtue of extending to modular
forms on compact Shimura curves
for which
analogues of  (\ref{eq:21})
are not available:
\begin{theorem}\label{thm:delta-crazy-sum}
  Let $y_1$ and $y_2$ be positive reals.
  Then
\begin{equation}\label{eq:18}
  (y_1 y_2)^6
  \Delta(i y_1)
  \Delta(i y_2)
  = \sum_{
    \substack{
      a,b,c,d \in \mathbold{Z} \\
      a d - b c > 0
    }
  }
  a_\Delta(a d - b c)
  e^{12 i \theta }
  W_{12} \left( 2 \pi r \right),
\end{equation}
where $r \in \mathbold{R}_+^\times$ and
$\theta \in \mathbold{R} / 2 \pi \mathbold{Z}$
are the polar coordinates defined by\footnote{
  For notational simplicity, we suppress the dependence
  of $r$ and $\theta$ on 
$y_1,y_2,a,b,c,d$.}
\[
r e^{i \theta} =
a \sqrt{y_2/y_1} + d\sqrt{y_1/y_2} 
+
i (b/\sqrt{y_1 y_2} - c \sqrt{y_1 y_2})
\in \mathbold{C}^\times 
\]
and $W_{12}$ is the rapidly-convergent infinite sum of Bessel functions
\[
W_{12}(x)
=
2^{-11} \sum_{n \in \mathbold{N}}
n^{12} ( n x K_{11}(n x) - K_{12}(n x)).
\]
\end{theorem}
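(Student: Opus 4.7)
The plan is to realize (\ref{eq:18}) as the geometric-equals-spectral expansion of an automorphic kernel function on $\GL_2(\mathbold{A}_{\mathbold{Q}})$, with a test function tuned so that the spectral side collapses to $\Delta$.

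\textbf{Setup and spectral side.} Let $\pi_\Delta$ be the cuspidal automorphic representation attached to $\Delta$, so that $\pi_{\Delta,\infty}$ is the weight-$12$ holomorphic discrete series $\sigma_{12}$, and let $\varphi_\Delta$ denote the adelic newform, normalized so that $\varphi_\Delta(a(y)) = y^{6}\Delta(iy)$ on $a(y) = \mathrm{diag}(y,1) \in \GL_2(\mathbold{R})$ (trivial at finite places). I would choose a test function $\Phi = h_\infty \otimes \mathbf{1}_{M_2(\widehat{\mathbold{Z}})}$, where $h_\infty$ is a specific scalar multiple of the matrix coefficient $g \mapsto \langle \sigma_{12}(g)v,v\rangle$ of a minimal-$K$-type vector $v \in \sigma_{12}$ for which convolution by $h_\infty$ projects onto $\sigma_{12}$. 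Form the kernel
\[
K(g_1,g_2) \;=\; \sum_{\gamma \in \GL_2(\mathbold{Q})} \Phi(g_1^{-1}\gamma g_2).
\]
By Schur orthogonality for discrete series, the factor $h_\infty$ annihilates every representation of $\GL_2(\mathbold{R})$ whose minimal $K$-type is not $\pm 12$; combined with the unramified nature of $\mathbf{1}_{M_2(\widehat{\mathbold{Z}})}$ and the equality $\dim S_{12}(\SL_2(\mathbold{Z})) = 1$, this forces the spectral expansion of $K$ to reduce to a scalar multiple of $\varphi_\Delta(g_1)\overline{\varphi_\Delta(g_2)}$. Evaluating at $g_j = a(y_j)$ produces the left-hand side $(y_1y_2)^6\Delta(iy_1)\Delta(iy_2)$.

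\textbf{Geometric side.} The support of $\mathbf{1}_{M_2(\widehat{\mathbold{Z}})}$ restricts $\gamma$ to $M_2(\mathbold{Z}) \cap \GL_2$, and the holomorphy of $\sigma_{12}$ restricts further to $\det\gamma > 0$, giving the sum indexed by $(a,b,c,d) \in \mathbold{Z}^4$ with $ad - bc > 0$. Writing the transported matrix $a(y_1)^{-1}\gamma\,a(y_2) = \bigl(\begin{smallmatrix}a' & b' \\ c' & d'\end{smallmatrix}\bigr)$ and using the embedding $\mathbold{C} \hookrightarrow M_2(\mathbold{R})$, $x+iy \mapsto xI + yJ$ with $J = \bigl(\begin{smallmatrix}0 & 1 \\ -1 & 0\end{smallmatrix}\bigr)$, one sees that $(r,\theta)$ are the polar coordinates of the $\mathbold{C}$-part $re^{i\theta} = (a'+d') + i(b'-c')$ of this matrix. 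Because $h_\infty$ transforms by the character $e^{12i\alpha}$ under the maximal compact $\mathrm{SO}(2)$ (on both sides, up to sign), its value on the transported matrix takes the form $e^{12i\theta}\Psi(r)$ for a radial function $\Psi$. The weighting $a_\Delta(ad - bc)$ appears because summing $\mathbf{1}_{M_2(\widehat{\mathbold{Z}})}$ over matrices of determinant $n$ realizes the Hecke operator $T_n$, whose eigenvalue on $\varphi_\Delta$ is $a_\Delta(n)$.

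\textbf{Identification of $W_{12}$ and the main obstacle.} The crux is the identity $\Psi(r) = W_{12}(2\pi r)$. By Mellin inversion, using $\int_0^\infty K_\nu(x)x^{s-1}\,dx = 2^{s-2}\Gamma(\tfrac{s+\nu}{2})\Gamma(\tfrac{s-\nu}{2})$ together with the recursion $\Gamma(\tfrac{s-10}{2}) = \tfrac{s-12}{2}\Gamma(\tfrac{s-12}{2})$, the Mellin transform of $nxK_{11}(nx) - K_{12}(nx)$ collapses to $2^{s-2}(s-13)n^{-s}\Gamma(\tfrac{s+12}{2})\Gamma(\tfrac{s-12}{2})$, so that
\[
\int_0^\infty W_{12}(x)\,x^{s-1}\,dx \;=\; 2^{s-13}(s-13)\zeta(s-12)\Gamma\bigl(\tfrac{s+12}{2}\bigr)\Gamma\bigl(\tfrac{s-12}{2}\bigr).
\]
This is precisely the archimedean Rankin--Selberg factor for $\Delta\times\Delta$ (with $(s-13)\zeta(s-12)$ regularizing the pole at $s=13$), which is what one expects to obtain by integrating $h_\infty$ against the Whittaker expansion of $\varphi_\Delta$. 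The main obstacle is carrying out this archimedean integral in the $(r,\theta)$ coordinates unambiguously, and tracking all normalization constants (formal degree of $\sigma_{12}$, Petersson norm of $\varphi_\Delta$, Haar measure conventions at each place) so that the identity emerges with exactly the stated constants $2^{-11}$ and $(y_1y_2)^{6}$ and no residual scalar.
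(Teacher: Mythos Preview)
Your setup does not prove the stated identity; there is a structural mismatch between what a pretrace kernel produces and what \eqref{eq:18} actually says.

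On the geometric side of $K(g_1,g_2)=\sum_{\gamma}\Phi(g_1^{-1}\gamma g_2)$ with $\Phi=h_\infty\otimes\mathbf{1}_{M_2(\widehat{\mathbold Z})}$, each term is literally $h_\infty(\sigma_{z_1}^{-1}\gamma\sigma_{z_2})$; there is no mechanism for the arithmetic weight $a_\Delta(\det\gamma)$ to appear there. Your sentence ``summing $\mathbf{1}_{M_2(\widehat{\mathbold Z})}$ over matrices of determinant $n$ realizes $T_n$, whose eigenvalue is $a_\Delta(n)$'' is a spectral-side statement, and you have already used the spectral side to produce $\varphi_\Delta(g_1)\overline{\varphi_\Delta(g_2)}$. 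You cannot extract $a_\Delta(n)$ on both sides. Relatedly, with a center-invariant $h_\infty$ the geometric sum does not even converge (the scalar matrices $nI$ all contribute $h_\infty(I)$), and on the spectral side $\pi_{\Delta,f}(\mathbf{1}_{M_2(\widehat{\mathbold Z})})$ is a divergent sum of Hecke operators.

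The function $W_{12}$ is also not the discrete-series matrix coefficient. Your own Mellin computation shows that $\widehat{W_{12}}(s)$ contains the factor $\zeta(s-12)$; a matrix coefficient of $\sigma_{12}$ has a Mellin transform built from $\Gamma$-factors alone. That $\zeta$ is not ``the archimedean Rankin--Selberg factor''---it is a finite-place object. In the paper's argument it arises from the completed Eisenstein series $E_s^*=2\xi(2s)E_s$ used to regularize a Petersson integral, not from any archimedean test function. What your kernel approach (with $\mathbf{1}_{\GL_2(\widehat{\mathbold Z})}$ in place of $\mathbf{1}_{M_2(\widehat{\mathbold Z})}$) would correctly yield is the Bergman/reproducing-kernel identity $(y_1y_2)^6\overline{\Delta(iy_1)}\Delta(iy_2)=c\sum_{\gamma\in\SL_2(\mathbold Z)}m_{12}(\sigma_{z_1}^{-1}\gamma\sigma_{z_2})$ with $m_{12}$ the actual matrix coefficient---a true but different formula, with no $a_\Delta$ weights and no inner $\sum_n$.

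The paper's route is genuinely different: Theorem \ref{thm:delta-crazy-sum} is the specialization $B=M_2(\mathbold Q)$, $d_B=N=1$, $k=12$, $z_j=iy_j$ of Theorem \ref{thm:rough-main-2}, whose proof writes the left side as the Shimizu inner product $\int_{\Gamma\backslash\mathbold H} y^{12}\overline{\Delta(z)}\,\theta_{z_1,z_2}(z)\,\tfrac{dx\,dy}{y^2}$ and then evaluates that integral via the identity $1=\int_{(1+\delta)}(H(s)-H(1-s))E_s^*\,\tfrac{ds}{2\pi i}$ followed by unfolding. The coefficients $a_\Delta(n)$ enter by pairing the Fourier expansion of $\overline\Delta$ against that of $\theta_{z_1,z_2}$ in the constant term, and the inner sum $\sum_{m}$ inside $W_{12}$ comes from $\zeta(2s)$ in $E_s^*$, with the specific choice $H(s)=s$ producing exactly the combination $mxK_{11}(mx)-K_{12}(mx)$ (Lemma \ref{lem:evaluate-integral-bessel-stuff}).
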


It is true, although not completely obvious,
that the RHS of \eqref{eq:18}
converges absolutely, and in fact rapidly;
see Remark \ref{rmk:conv-rapid-sum-alpha}.



The general formula given by
Theorem \ref{thm:rough-main-2}
is no less explicit in principle than
that of
Theorem \ref{thm:delta-crazy-sum}.
It follows that one can
compute modular
parametrizations
by compact Shimura curves
in terms of integrals of expressions essentially of the shape
\eqref{eq:18}
(see \sec\ref{sec:numerical-examples} for numerical examples).

In Theorem \ref{thm:ext-maass-deriv},
we give formulas
for the
values of Shimura--Maass derivatives
of newforms,
which may be used to compute Taylor expansions with respect
to suitable local parameters.
Such formulas should provide an effective tool
for computing equations of Shimura curves,
but we do not pursue such applications here.

The formula \eqref{eq:18} is similar to those
arising from standard methods
for
computing values of $L$-functions,
so one interpretation of the
results
of this paper is that it is no more difficult
to evaluate automorphic forms on
quaternion algebras than to evaluate
$L$-functions of logarithmically-comparable conductor.



\subsubsection*{General method}
We turn to sketching the general method (A),
of which
a more detailed account will be found
in \sec\ref{sec:method-proofs}.
Recall,
from \sec\ref{sec:relat-probl-comp},
that the problem under consideration
is to recover the values
of a newform of given Hecke eigenvalues
on a Shimura curve
(see \sec\ref{sec:background} for definitions).
The abstract possibility of such recovery is the content of the
multiplicity one theorem
(henceforth abbreviated M1)
of Jacquet--Langlands (see \sec\ref{sec:newf-shim-curv}),
so one may view the problem under consideration
as that of giving a computationally effective
realization of M1.
From this perspective, our main tool
suggests itself naturally, as we now explain.


Jacquet--Langlands proved
M1
by establishing
a correspondence
between automorphic forms on $B^\times$ and on $\GL_2$,
identifying
scaling classes of newforms on compact Shimura
curves over $\mathbold{Q}$
with those on congruence covers
of $\SL_2(\mathbold{Z}) \backslash \mathbold{H}$.
Their proof
involved a comparison of instances of the Selberg trace
formula,
and was non-constructive.

Shimizu \cite{MR0333081}, generalizing
earlier work of Eichler, realized the Jacquet--Langlands
correspondence
analytically
as a theta correspondence
between $\GO(B)$ and $\GL_2$.
Since
$\GL_2$ forms admit Fourier expansions,
Shimizu's result reduces our task
to making that theta correspondence computationally
effective.

A key new ingredient, which may
be of independent computational and theoretical interest,
is a technique for computing Petersson inner products
on finite index subgroups of
$\SL_2(\mathbold{Z})$
that does not require knowledge of an explicit fundamental
domain (see \sec\ref{sec:comp-integr-autom}).
We have in mind
the natural generalization
to the setting of
automorphic forms on higher-dimensional
quotients attached to $\GL_2$ over a number field,
where explicit fundamental domains
(and multidimensional oscillating integrals thereon)
seem prohibitively
complicated.



\subsection{Other approaches}
A precursor to the method of this paper
was implemented in Jan 2011, described in the notes
\cite{nelson-appendix-C-prasanna-notes},
and presented at the Arizona Winter School in Mar 2011.
A preprint of Voight and Willis \cite{2012arXiv1205.0045V}
describes another technique, quite different from our own,
for performing computations on
compact Shimura curves.
Inspired by methods of Stark and Hejhal
for computing Fourier expansions of Maass forms on
$\SL_2(\mathbold{Z})$,
the authors compute the Taylor expansion of a modular form
on a compact Shimura curve in a suitable
local parameter by solving for the linear conditions
imposed by the automorphy and Hecke
relations.
Both techniques seem worth developing:
for example,
while the approach described by Voight and Willis
is sufficiently
flexible to extend even to noncongruence or nonarithmetic
Fuchsian groups,
the method described in this paper
seems strong for certain applications, such as
computing modular
parametrizations of elliptic curves,
in which there
are already well-developed methods
for obtaining the Hecke eigenvalues.
We thank John Voight for several lively discussions
of such matters.

\subsection{Acknowledgments}
The problem considered in this paper, as well as our general
strategy, was conceived over the course of several
fruitful conversations with Kartik Prasanna; it is a pleasure to
thank him for numerous helpful discussions that have contributed
substantially to our own understanding.  We thank Andrew Snowden
for helpful discussions during a visit
to the University of Michigan in Jan 2011, at which the first
computations of the sort discussed in this paper were carried
out.
We thank Henri Darmon, Daniel Disegni, Jon Hanke,
Eren Kiral Mehmet,
Victor Rotger,
William Stein, Carlos de Vera, and John Voight
for helpful discussions
and encouragement during the Arizona Winter School 2011,
of which we thank the organizers for their support.
We thank Nahid Walji for his helpful feedback on an earlier draft
of this paper.

\section{Background and notation}
\label{sec:background}

\subsection{Summary}
\label{sec:summary-common-notation}
The remaining subsections (\S\ref{sec:quaternion-algebras2}---\S\ref{sec:jacq-langl-corr-2}) of \S\ref{sec:background}
aim to make precise the summary below,
and
may be skimmed or referred to as necessary.

\subsubsection*{Groups}
For each squarefree integer $D$ possessing an even number
  of prime factors,
  and for each positive integer $N$ coprime to $D$,
  we fix a Fuchsian group
  \[
  \Gamma_0^D(N) = R \cap \SL_2(\mathbold{R}) < \SL_2(\mathbold{R})
  \]
  arising from a choice of Eichler order $R$ of level $N$ in a
  quaternion
  algebra
  $B_{/\mathbold{Q}}$
  of discriminant $D$ (see \S\ref{sec:quaternion-algebras2})
  together with a sequence of inclusions
  \[
  R \subset B \hookrightarrow M_2(\mathbold{R})
  \]
  (see Definition \ref{defn:definition-of-groups-gamma-0-D-N}).
  We take $\Gamma_0^1(N) = \Gamma_0(N)$ for convenience.
\subsubsection*{Newforms}
  For each $\Gamma < \SL_2(\mathbold{R})$ as above
  and each positive even integer $k$, we define
  the set $\mathcal{F}_k(\Gamma)$ of (holomorphic cuspidal) \emph{newforms}
  on $\Gamma$ (see \S\ref{sec:newf-shim-curv}).
\subsubsection*{Jacquet--Langlands correspondents}
Let $\Gamma ' = \Gamma_0^D(N)$ and $\Gamma = \Gamma_0(D N)$.  We
recall the Jacquet-Langlands correspondence as a natural
bijection $\mathcal{F}_k(\Gamma') / \mathbold{C}^\times =
\mathcal{F}_k(\Gamma) / \mathbold{C}^\times$ between scaling
classes of newforms (see \S\ref{sec:jacq-langl-corr-2}).  We say
that a pair of newforms $f' \in \mathcal{F}_k(\Gamma ')$, $f \in
\mathcal{F}_k(\Gamma)$ corresponding thereunder are
\emph{compatibly normalized} if
$f(z)
= \sum_{n \in \mathbold{N}} a_f(n) e(n z)$ with $a_f(1) = 1$ and the Petersson
norms of $f$ and $f'$ with respect to standard hyperbolic measures coincide (see
Definition \ref{defn:compatibly-normalized}).


\subsection{Quaternion algebras
  and Eichler orders}\label{sec:quaternion-algebras2}
We collect here some background on quaternion algebras
and Eichler orders
that will be relevant for what follows.
We refer to \cite{MR580949} for details and proofs.

\subsubsection*{Quaternion algebras over general fields}
Let $F$ be a field.
A \emph{quaternion algebra} $B$ over
$F$
is a simple $F$-algebra with center $F$ and dimension $2^2=4$
over $F$.
A basic example is the algebra $M_2(F)$ of $2 \times 2$
matrices.
A quaternion algebra is called \emph{split},
or said  \emph{to split},
if it is isomorphic to $M_2(F)$
(as an $F$-algebra);
it is called \emph{non-split}, or said
\emph{not to split}, otherwise.
For $a, b \in F^*$,
let $B = (a,b|F)$ denote the
$F$-algebra
$F \oplus F i \oplus F j \oplus F i j$,
where $i, j$ satisfy
the relations
$i^2 = a, j^2 = b$, and $i j = - j i$.
Then $(a,b|F)$ is a quaternion algebra over $F$,
and every quaternion algebra over $F$ is isomorphic
to one of this form
provided that $F$ is not of characteristic $2$.

Let $E$ be an extension field of $F$.
The most relevant examples are quadratic field extensions
of $F$, and completions of $F$ if $F$ is a number field.
Say that
$E$ \emph{splits} $B$, or that $B$ \emph{splits at/over} $E$,
etc., 
if $B \otimes_F E \cong M_2(E)$ splits.
It is known
that $E$ splits $B$ if and only if
there exists an embedding $B \hookrightarrow M_2(E)$
(of $F$-algebras).
Every quaternion algebra over $F$ is split
by some quadratic field extension of $F$.
For example, if $a$ is not a square in $F$, then
$F(\sqrt{a})$ splits
$(a,b|F)$,
and we have an embedding
\[
(a,b|F) \ni
x_0 + x_1 i + x_2 j + x_3 i j \mapsto
\begin{bmatrix}
  x_0 + x_1 \sqrt{a} &  x _2 + x _3 \sqrt{a} \\
  b (x _2 - x _3 \sqrt{a}) & x _0 - x _1 \sqrt{a}
\end{bmatrix}
\in M_2(F(\sqrt{a})).
\]

A quaternion algebra $B$ comes equipped with a
\emph{main involution}
$\iota$ that induces the nontrivial automorphism
of every quadratic field extension of $F$ contained in $B$.
If $B \hookrightarrow M_2(E)$ as above,
then $\iota$
may be obtained by restricting the adjoint map $\left(
  \begin{smallmatrix}
    a &b\\
    c&d
  \end{smallmatrix}
\right)
\mapsto \left(
  \begin{smallmatrix}
    d &-b\\
    -c&a
  \end{smallmatrix}
\right)$.
We denote by
\begin{align*}
  \det(\alpha) &= \alpha \alpha ^{\iota} \in F \\
  \tr(\alpha) &= \alpha + \alpha^{\iota} \in F
\end{align*}
the
(reduced)
\emph{norm} and
(reduced) \emph{trace} on $B$.\footnote{
  For each embedding $B \hookrightarrow M_2(E)$,
  the reduced norm on $B$ is the pullback of the determinant on $M_2(E)$.
  Thus the notation
  ``$\det$'',
  while mildly nonstandard, should
  introduce no confusion.
}
The trace induces an $F$-valued bilinear form
\[\langle b_1, b_2 \rangle = \tr(b_1 b_2^{\iota}) = b_1
b_2^{\iota} + b _1 ^\iota b _2
\]
on $B$, called the \emph{trace pairing}.

\subsubsection*{Quaternion algebras over number fields}
Suppose henceforth
that $F$ is a number field,
and let $|F|$ be its set of places.
If $v \in |F|$,
say that $v$ splits $B$, or that
$B$ splits at/over $v$, if $F_v$ splits $B$.
For example,
if $a,b \in F^\times$, then $(a,b|F)$ splits at $v$
if and only if the Hilbert symbol $(a,b)_v$,
which is defined to be $1$ if $a$ is a norm from
$F_v(\sqrt{b})$ and $-1$ otherwise,
takes the value $1$.

Let $\Sigma_B$ denote the set all $v \in |F|$
that do \emph{not} split $B$.
Class field theory implies the map
$B \mapsto \Sigma_B$
induces a bijection
between
the set of isomorphism classes of quaternion algebras
over $F$ and the set of finite even subsets of $|F|$;
in the case $B = (a,b|F)$, this amounts to the product formula
$\prod_v (a,b)_v = 1$.

The (reduced) \emph{discriminant} $\mathfrak{d}_B$
of $B$ is the
squarefree integral ideal
composed of the primes
at which $B$ does not split.
When $B$ is a quaternion algebra
over the rational number field $F = \mathbold{Q}$,
the discriminant $\mathfrak{d}_B$
is an integral ideal in $\mathbold{Z}$ generated
by a unique positive integer $d_B$, which we shall
also call the discriminant of $B$.

\subsubsection*{Orders}
An \emph{order} $R$ in $B$ is a unital subring of
$B$ that is a lattice, i.e., for which $\dim_{\mathbold{Z}} R = \dim_{\mathbold{Q}} B$.
A \emph{maximal order} is an order that is maximal in $B$ with respect
to inclusion.
For example, if $\mathfrak{o}$ is an order in $F(\sqrt{a})
= F \oplus F i$,
$i^2 = a$,
and $b \in F \cap \mathfrak{o}$,
then $\mathfrak{o} \oplus \mathfrak{o} j$
is an order in $(a,b|F)$;
if $\mathfrak{o}$ is an order in $F$,
then
$M_2(\mathfrak{o})$ is an order in $M_2(F)$;
if $\mathfrak{o}$ is maximal in $F$, then $M_2(\mathfrak{o})$ is
maximal in $M_2(F)$.

An \emph{Eichler order} $R \subset B$
is an intersection of two maximal
orders
in $B$.
Let $\mathfrak{o}$ be the maximal
order in the number field $F$,
and $\mathfrak{o}_v$ its closure in $F_v$
for each finite place $v$ of $F$.
The \emph{level} of an Eichler order $R \subset B$
is the integral ideal $\mathcal{N}$ of $F$,
coprime to $\mathfrak{d}_B$,
such that for each finite place $v$
of $F$ at which $B$ splits, there exists an isomorphism
$B \otimes_F F_v \rightarrow M_2(F_v)$
taking $R \otimes_{\mathfrak{o}} \mathfrak{o}_v$
to $\left(
  \begin{smallmatrix}
    \mathfrak{o}_v &\mathfrak{o}_v \\
    \mathfrak{N} \mathfrak{o}_v & \mathfrak{o}_v
  \end{smallmatrix}
\right)$.
When $F = \mathbold{Q}$
and
$\mathcal{N}$ 
is generated by the positive integer $N$,
we shall refer to $N$ as the level of $R$.

\subsubsection*{Indefinite rational quaternion algebras}
An \emph{indefinite rational quaternion algebra}
is a quaternion algebra over $\mathbold{Q}$ that splits
at the unique real place $\infty$ of $\mathbold{Q}$,
or equivalently, for which there exists an embedding
$B \hookrightarrow M_2(\mathbold{R})$ of $\mathbold{Q}$-algebras.
Class field theory implies that the map $B \mapsto d_B$ induces
a bijection between the set of isomorphism classes of indefinite
rational quaternion algebras
and the set
of positive squarefree integers having an even number of prime
factors.

Let $B$ be an indefinite rational quaternion algebra.
The map sending an Eichler order $R \subset B$ to its level $N
\in \mathbold{N}$
induces a bijection between the $B^\times$-conjugacy classes
of Eichler orders in $B$ and the positive integers
coprime to $d_B$.
For example, if $B = M_2(\mathbold{Q})$,
then $B$ is an indefinite rational
quaternion algebra
of discriminant $d_B = 1$,
and every Eichler order of (positive integral) level $N$ in $B$ is conjugate
to $\left(
  \begin{smallmatrix}
    \mathbold{Z} &\mathbold{Z} \\
    N \mathbold{Z} & \mathbold{Z} 
  \end{smallmatrix}
\right)$.

\begin{definition}
\label{defn:definition-of-groups-gamma-0-D-N}
For each
squarefree positive integer $D$ having an even number
of prime factors
and each positive integer $N$ coprime to $D$,
we choose
a subgroup
\begin{equation}\label{eq:66}
\Gamma_0^{D}(N) < \SL_2(\mathbold{R}),
\end{equation}
as follows.
\begin{itemize}
\item If $D = 1$, we take
$\Gamma_0^{D}(N) = \Gamma_0(N) = \left(
  \begin{smallmatrix}
    \mathbold{Z} &\mathbold{Z} \\
    N \mathbold{Z} &\mathbold{Z} 
  \end{smallmatrix}
\right) \cap \SL_2(\mathbold{Z})
= \left(
  \begin{smallmatrix}
    \mathbold{Z} &\mathbold{Z} \\
    N \mathbold{Z} &\mathbold{Z} 
  \end{smallmatrix}
\right) \cap \SL_2(\mathbold{R})$.
\item If $D \neq 1$, we choose an indefinite rational quaternion
algebra
$B$ of discriminant $D$, a real embedding
$B \hookrightarrow M_2(\mathbold{R})$,
and an Eichler order
$R \subset B$ of level $N$.
We then let $\Gamma_0^{D}(N) = R \cap \SL_2(\mathbold{R})$
be the image of
the group of norm one units in $R$.
\end{itemize}

The group
$\Gamma_0^{D}(N)$ is well-defined
in the sense that its $\SL_2(\mathbold{R})$-conjugacy class
is uniquely determined by 
$D$ and $N$.
The quotient
$\Gamma_0^D(N) \backslash \SL_2(\mathbold{R})$
is compact
if and only if
$D \neq 1$.
\end{definition}

\subsection{Newforms on Shimura curves}\label{sec:newf-shim-curv}
We recall the notion of a (holomorphic) newform on a Shimura curve.

\subsubsection*{Shimura curves}
Let $\mathbold{H} = \{x + i y : y > 0\}$ be the upper half-plane.
The group
$\GL_2(\mathbold{R})^+$
of real $2 \times 2$ matrices with positive determinant
acts on $\mathbold{H}$ by fractional linear
transformations:
$\left(
  \begin{smallmatrix}
    a&b\\
    c&d
  \end{smallmatrix}
\right) z := (a z + b)/(cz + d)$.

Let $\Gamma < \SL_2(\mathbold{R})$ be a lattice,
that is to say a discrete subgroup of finite Haar-covolume.
For example, $\Gamma_0^D(N)$ is a lattice.
More generally, one attaches a lattice $\Gamma$
to each sequence of inclusions
\begin{equation}\label{eq:77}
  R \subset B \hookrightarrow M_2(\mathbold{R}),
\end{equation}
where $B$ is a quaternion algebra with one split real place
over a totally real number field, $B \hookrightarrow M_2(\mathbold{R})$
is a fixed real embedding (unique up to conjugation), and
$R \subset B$ is an order,
by taking $\Gamma = R \cap \SL_2(\mathbold{R})$ the be the image of the group of norm one
units
in $R$.

For each lattice $\Gamma$,
one defines a compact Riemann surface
$X_\Gamma$
by suitably compactifying the quotient
$\Gamma \backslash \mathbold{H}$ (see \cite[\S1.5]{MR0314766}).
By a \emph{Shimura curve},\footnote{This definition is not intended to be exhaustive.}
we mean an $X_\Gamma$
for some
$\Gamma$ arising from a sequence \eqref{eq:77}.
Such curves have canonical models over number fields
(see \sec\ref{sec:setting} and \cite[\S9]{MR0314766}).
We
call
$X_\Gamma$ a \emph{compact Shimura curve}\footnote{This
  terminology
  is a mild misnomer,
  because $X_\Gamma$
  is in all cases a compact Riemann surface.}
if $\Gamma \backslash \mathbold{H}$ is compact.


\subsubsection*{Newforms}
For each $k \in 2 \mathbold{N}$, we recall
the weight $k$ slash operator :
for each function $f:\mathbold{H} \rightarrow \mathbold{C}$
and each $g \in \GL_2(\mathbold{R})^+$,
the function
$f|_k g$ is given by
\[
f|_k g(z) :=
\frac{
  \det(g)^{k/2}
}
{
  (c z + d)^{k}
}
f(g z)
\quad 
 \text{ if }
g = \begin{bmatrix}
  * & * \\
  c & d
\end{bmatrix}.
\]
By an \emph{automorphic function}
of weight $k$ on $\Gamma$,
we mean a smooth function
$f : \mathbold{H} \rightarrow \mathbold{C}$
that satisfies $f|_k \gamma = f$ for all $\gamma \in \Gamma$.
The space $S_k(\Gamma)$ of \emph{cusp forms} of weight $k$ on $\Gamma$
consists of those automorphic functions of weight $k$
that are holomorphic and vanish at the cusps of $\Gamma$, if any.
One knows that $S_k(\Gamma)$ is finite-dimensional,
and that $S_2(\Gamma)$ is isomorphic
to the space of holomorphic $1$-forms on $X_\Gamma$.

Let $\widetilde{\Gamma}$
denote the set of all $\alpha \in \GL_2(\mathbold{R})^+$
for which $\Gamma \alpha \Gamma$ is a finite union of
either left or right $\Gamma$-cosets.
The \emph{Hecke algebra}
$\mathcal{H}(\Gamma) = \mathbold{C} [ \Gamma \backslash
\widetilde{\Gamma } / \Gamma]$,
which consists of formal finite $\mathbold{C}$-linear combinations
of double cosets $\Gamma \alpha \Gamma$ ($\alpha \in
\widetilde{\Gamma}$),
has a natural right action on $S_k(\Gamma)$
that linearly extends $f| \Gamma \alpha \Gamma
= \sum f|_k \alpha_j$ if $\Gamma \alpha \Gamma = \bigsqcup \Gamma
\alpha_j$.
By a \emph{newform} in $S_k(\Gamma)$, we mean an eigenfunction
of $\mathcal{H}(\Gamma)$.\footnote{
  The
  concise definition given here,
  which is more closely aligned with the local theory
  of newforms after Casselman,
  agrees
  with the classical one when $\Gamma = \Gamma_0(N)$.}
Let $\mathcal{F}_k(\Gamma)$ denote the set
of
newforms in $S_k(\Gamma)$.
The set $\mathcal{F}_k(\Gamma)$ is preserved under scaling
by $\mathbold{C}^\times$.

\subsubsection*{Multiplicity one}
For each newform $f \in \mathcal{F}_k(\Gamma)$,
there is a character $\lambda_f : \mathcal{H}(\Gamma) \rightarrow
\mathbold{C}^\times$ of the Hecke algebra
with the property
$f | \varphi = \lambda_f(\varphi) f$
for all $\varphi \in
\mathcal{H}(\Gamma)$.
For general lattices $\Gamma$,
a newform $f$ need not be determined by $\lambda_f$;
in other words,
the one-dimensional irreducible constituents
of the $\mathcal{H}(\Gamma)$-module $S_k(\Gamma)$
need not occur with multiplicity one.
One perspective
is that $\widetilde{\Gamma}$ is generally too
small.

For \emph{arithmetic} lattices $\Gamma$ arising from
Eichler orders in
quaternion algebras,
one knows that $\widetilde{\Gamma} = B^\times \cap
\GL_2(\mathbold{R})^+$ is large,
and (by a theorem of Jacquet--Langlands)
that $\mathcal{H}(\Gamma)$ acts on $S_k(\Gamma)$
with multiplicity one.

In particular,
each newform $f$ on $\Gamma = \Gamma_0^D(N)$
is determined by
its character $\lambda_f$,
the set $\mathcal{F}_k(\Gamma) / \mathbold{C}^\times$
of scaling classes of newforms of given weight is finite,
and the problem of recovering
the values of a newform $f$ from its Hecke data $\lambda_f$
is meaningful.


\subsection{Jacquet--Langlands correspondence}\label{sec:jacq-langl-corr-2}
Let $B$ be an indefinite rational quaternion algebra,
and let $N$ be a positive integer coprime to $d_B$.
Set $\Gamma '  = \Gamma_0^{d_B}(N)$
and $\Gamma = \Gamma_0(d_B N)$.
The Jacquet--Langlands correspondence asserts
(among other things) that there is
a natural bijection of finite sets
\[
\mathcal{F}_k(\Gamma ') / \mathbold{C}^\times
= \mathcal{F}_k(\Gamma) /
\mathbold{C}^\times
\]
between the scaling classes of newforms
on $\Gamma '$ and those on $\Gamma$,
characterized by a certain compatibility between
the actions of $\mathcal{H}(\Gamma ')$
and $\mathcal{H}(\Gamma)$.
This map lifts
non-canonically
to  a bijection
$\mathcal{F}_k(\Gamma ')
\leftrightarrow  \mathcal{F}_k(\Gamma)$.

Briefly, $f_B \in \mathcal{F}_k(\Gamma ')$
corresponds to $f \in \mathcal{F}_k(\Gamma)$
if and only if
for each prime $p$ not dividing $d_B N$,
one has $\lambda_{f_B}(\Gamma' \alpha ' \Gamma ')
= \lambda_{f}(\Gamma \alpha \Gamma)$
for some $\alpha '$ (resp. $\alpha$) of
integral trace and
determinant $p$ in $\widetilde{\Gamma '}$
(resp. $\widetilde{\Gamma}$).


\begin{definition}[Compatibly normalized
  Jacquet--Langlands correspondents]
\label{defn:compatibly-normalized}
Let $f_B \in \mathcal{F}_k(\Gamma ')$ 
and $f \in \mathcal{F}_k(\Gamma)$ be Jacquet--Langlands
correspondents.
Write
\begin{equation}\label{eq:7}
  f(z) = \sum_{n \in \mathbold{N} } a_f(n) e(n z).
\end{equation}
Call $f$ and $f_B$ are \emph{compatibly normalized}
if $a_f(1) = 1$
and the Petersson norms
of $f$ and $f_B$ with respect to
the standard hyperbolic
measures on $\Gamma \backslash \mathbold{H}$
and $\Gamma ' \backslash \mathbold{H}$ coincide,
i.e.,
$  \int _{\Gamma \backslash \mathbold{H} }
  y^k |f(z)|^2 \, \frac{d x
    \, d y}{y^2}
=
  \int _{\Gamma' \backslash \mathbold{H} }
  y^k |f_B(z)|^2 \, \frac{d x
    \, d y}{y^2}.$
\end{definition}

\begin{remark}
\label{rmk:volumes-known}
It is known
(see \cite[eq. 22]{MR0171771} or \cite{MR580949})
that
$\vol(\Gamma \backslash \mathbold{H})
= \frac{\pi }{3}
d_B N
\prod_{p \mid d_B N}
(1 + p^{-1})
$
and
$\vol(\Gamma ' \backslash \mathbold{H})
= \frac{\pi }{3}
d_B N
\prod_{p \mid d_B}
(1 - p^{-1})
\prod_{p \mid N}
(1 + p^{-1}).$
\end{remark}

\begin{remark}
  Let $f$ be as above, with $a_f(1) = 1$.
  The coefficients $a_f(n)$ are determined,
  in an explicit and straightforward manner,
  by $\lambda_f$, hence by $\lambda_{f_B}$.
 They are algebraic integers that may be computed in a variety of software packages (such as
  SAGE \cite{sage2009}).
\end{remark}

\begin{remark}\label{rmk:well-def}
  By compatibly normalizing $f$ and $f_B$,
  we have determined
  the individual values $f_B(z)$ only up to a scalar of magnitude one.
  However, for each pair of points $z_1, z_2 \in \mathbold{H}$,
  the quantity
  \begin{equation}\label{eq:6}
    \overline{f_B(z_1)} f_B(z_2)
  \end{equation}
  \emph{is} well-defined
  in the sense that it is determined by
  $f$ (hence by $\lambda_f$, hence by $\lambda_{f_B}$) and
  by the data $R \subset B \hookrightarrow M_2(\mathbold{R})$
  giving rise to
  the definition of $\Gamma '$.
  Conversely, knowledge of the quantities \eqref{eq:6}
  for all $z_1, z_2 \in \mathbold{H}$
  determines the form $f_B$ up to a scalar of magnitude one.
\end{remark}

\section{Explicit formula}\label{sec:explicit-formula}
In this section we state formulas for the values
(see \sec\ref{sec:main-formula})
and derivatives (see \sec\ref{sec:extens-shim-maass})
of newforms on Shimura curves attached to squarefree
level Eichler orders in indefinite rational quaternion algebras.
\secs\ref{sec:weighted-sums-over}
and \ref{sec:part-dual-latt} introduce notation
that may be referred to as necessary.

\subsection{Main formula
  for the values}\label{sec:main-formula}
Let $B$ be an indefinite rational quaternion algebra,
let $N$ be a positive integer prime to $d_B$,
and
let $R \subset B \hookrightarrow M_2(\mathbold{R})$
be a sequence of
inclusions giving rise to
a lattice $\Gamma ' = \Gamma_0^{d_B}(N)$
as in \secs\ref{sec:quaternion-algebras2}
and \ref{sec:newf-shim-curv}.
Let $\Gamma = \Gamma_0(d_B N)$.
Let $k$ be a positive even integer,
and let $f_B \in \mathcal{F}_k(\Gamma ')$
and $f \in \mathcal{F}_k(\Gamma)$
be compatibly normalized
Jacquet--Langlands correspondents, as in Definition
\ref{defn:compatibly-normalized}
of \sec\ref{sec:jacq-langl-corr-2}.
Let $a_f(n)$ be the $n$th Fourier coefficient of $f$, as in \eqref{eq:7}.
Our immediate goal is to give an explicit formula for
the well-defined quantities $\overline{f_B(z_1)} f_B(z_2)$
(see Remark \ref{rmk:well-def})
in the special case that $N$ is squarefree.

Thus, suppose also that $N$ is squarefree.
To state our formula, we must introduce some notation.
Let $d$ be a divisor of $d_B N$.
A basic consequence
of Atkin--Lehner theory (see \cite{MR0268123}) is
that the Fourier coefficient $a_f(d)$
does not vanish (in fact, that $a_f(d) = \pm d^{k/2-1}$),
hence it makes sense to set
\begin{equation}\label{eq:28}
  c_f(d) := \frac{\mu((d,N))}{d \cdot a_f(d)}
  \quad \left(
  = d  \frac{\mu((d,d_B))}{d}
  \frac{\mu(d)}{d \cdot a_f(d)}
  \right).
\end{equation}
For each point $z=x + i y \in \mathbold{H}$ in the upper half-plane,
define the matrix
\[
\sigma_z := \begin{bmatrix}
  y^{1/2} & x y^{-1/2} \\
  0 & y^{-1/2}
\end{bmatrix}
=
\begin{bmatrix}
  y^{-1/2} &  \\
  & y^{-1/2}
\end{bmatrix}
\begin{bmatrix}
  1 & x \\
  & 1
\end{bmatrix}
\begin{bmatrix}
  y &  \\
  & 1
\end{bmatrix}
\in \SL_2(\mathbold{R}),
\]
which has the property that it sends $i$ to $z$
under the usual action by fractional linear
transformations.

We postpone the introduction of some additional, more involved
notation
$V_k$ and $R^{(d)}$
until \secs\ref{sec:weighted-sums-over}
and
\ref{sec:part-dual-latt}.
Roughly, $V_k(L,t)$ is a rapidly-convergent
weighted sum over a lattice $L <
M_2(\mathbold{R})$,
while $R^{(d)}$ is a lattice in $B
\hookrightarrow  M_2(\mathbold{R})$ that is locally dual to $R$
at primes dividing $d$.
Taking that notation
for granted, our main formula reads:
\begin{theorem}
  \label{thm:rough-main-2}
  For each
  $z_1 = x_1 + i y_1$ and $z_2 = x_2 + i y_2$ in $\mathbold{H}$,
  we have
  \[
  (y_1 y_2)^{k/2}
  \overline{f_B(z_1)}
  f_B(z_2)
  =
  \sum_{n \in \mathbold{N}}
  a_f(n)
  \sum_{d|d_B N}
  c_f(d)
  V_k
  \left(
    \sigma_{z_1}^{-1} R^{(d)} \sigma_{z_2}
    ;
    n/d
  \right).
  \]
\end{theorem}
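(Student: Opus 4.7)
The plan is to realize the LHS $\overline{f_B(z_1)} f_B(z_2)$ as a Petersson inner product on the $\GL_2$ side via Shimizu's theta correspondence (as advertised in the introduction), then unfold against the Fourier expansion of $f$. Concretely, I would choose a Schwartz function $\phi = \phi_\infty \otimes \phi_{\fin}$ on $B(\mathbold{A})$, where $\phi_\infty$ is the standard weight-$k$ Gaussian attached to the data $(z_1, z_2)$ (this is what produces the factor $(y_1 y_2)^{k/2}$ and the ``polar'' exponential factor seen in Theorem \ref{thm:delta-crazy-sum}) and $\phi_{\fin}$ is a linear combination of characteristic functions of the partial dual lattices $R^{(d)}$. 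Shimizu's formula then gives
\begin{equation*}
(y_1 y_2)^{k/2} \overline{f_B(z_1)} f_B(z_2) = \langle f,\, \Theta_\phi(\cdot\,; z_1, z_2) \rangle_{\Gamma \backslash \mathbold{H}},
\end{equation*}
with the Petersson normalizations in Definition \ref{defn:compatibly-normalized} pinning down the constant.

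Second, I would unfold the Petersson inner product against $f(z) = \sum_n a_f(n) e(nz)$. For any lattice $L < M_2(\mathbold{R})$, the theta kernel built from the indicator of $L$ against the chosen archimedean Gaussian has, as its $n$-th Fourier coefficient in $z$, the sum over $\alpha \in L$ of reduced norm $n$ of a universal archimedean integrand. This is precisely how $V_k(\sigma_{z_1}^{-1} R^{(d)} \sigma_{z_2}; n/d)$ is defined in \S\ref{sec:weighted-sums-over}: the matrices $\sigma_{z_i}$ implement the Iwasawa twist by $(z_1,z_2)$, and the parameter $n/d$ records the norm after the local test function at primes dividing $d$ is rescaled. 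The unfolding itself is the Rankin--Selberg style device from \S\ref{sec:comp-integr-autom} which avoids an explicit fundamental domain: since $f$ is cuspidal of level $\Gamma_0(d_B N)$ while the theta series is modular for a subgroup defined by $\phi_{\fin}$, the inner product reduces to a sum indexed by $\Gamma_\infty$-cosets and hence, after using the Fourier expansion, to an integral over $\{y>0\} \times [0,1]$ that picks out a single lattice contribution.

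Third, and this is where the sum $\sum_{d \mid d_B N} c_f(d)$ enters, I would handle the local structure at primes dividing $d_B N$. Because $N$ is squarefree and the newform $f_B$ is a Jacquet--Langlands correspondent, at each bad prime $p$ the local representation is Steinberg or a ramified twist thereof, and Atkin--Lehner gives $a_f(p) = \pm p^{k/2 - 1}$. The ``correct'' local test function at $p$ matching $f_B$ under the theta correspondence is not the indicator of $R \otimes \mathbold{Z}_p$ but a precise linear combination of indicators of the partial duals $R^{(d)} \otimes \mathbold{Z}_p$; the local Whittaker computation, done prime-by-prime using the explicit Steinberg new-vector and the Atkin--Lehner sign, yields exactly the coefficient $c_f(d) = \mu((d,N))/(d \cdot a_f(d))$. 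Summing over $d \mid d_B N$ and substituting back into the unfolded expression gives the stated identity.

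The main obstacle is the bad-prime matching: one must verify that the particular linear combination $\sum_d c_f(d)\, \mathbf{1}_{R^{(d)}}$ at each prime $p \mid d_B N$ projects the local theta lift onto the new-vector in the local component of $f$ (respectively $f_B$). This is a local representation-theoretic computation, carried out separately at split primes dividing $N$ (where Steinberg new-vector theory on $\GL_2(\mathbold{Q}_p)$ applies) and at ramified primes dividing $d_B$ (where one uses the order-one-dimensional representation on $B_p^\times$); the shape $c_f(d) = (d \cdot a_f(d))^{-1} \mu((d,N))$ is the clean output of both cases simultaneously. Once this local matching is established the archimedean rapid convergence of $V_k$ (as in Remark \ref{rmk:conv-rapid-sum-alpha}) guarantees absolute convergence of the global double sum.
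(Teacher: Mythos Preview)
Your starting point—the Shimizu/Watson identity expressing $(y_1 y_2)^{k/2}\,\overline{f_B(z_1)} f_B(z_2)$ as a Petersson inner product $\langle f, \theta_{z_1,z_2}\rangle_{\Gamma \backslash \mathbold{H}}$—is correct and matches the paper. But the remainder of your plan departs from the paper in ways that leave genuine gaps.

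First, you cannot ``unfold the Petersson inner product against $f$.'' A cusp form is not a sum over $\Gamma_\infty \backslash \Gamma$, so there is nothing to unfold; the integral does not reduce to $\int_0^\infty \int_0^1$. The paper's device (Theorem~\ref{thm:compute-integral-of-automorphic-form}) is quite different: it inserts the identity
\[
1 \;=\; \int_{(1+\delta)} \bigl(H(s)-H(1-s)\bigr)\,E_s^*(z)\,\frac{ds}{2\pi i}
\]
into the integrand and unfolds the \emph{Eisenstein series}. This produces
\[
\int F \;=\; \int_{(1+\delta)} \bigl(H(s)-H(1-s)\bigr)\,2\xi(2s)\sum_{\tau \in \Gamma \backslash \SL_2(\mathbold{Z})} a_F(0,\cdot;\tau)^\wedge(1-s)\,\frac{ds}{2\pi i},
\]
which requires the constant term of $F = y^k\,\overline{f}\,\theta_{z_1,z_2}$ at \emph{every} cusp of $\Gamma_0(d_B N)$, not just at $\infty$. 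The factor $\xi(2s)$ is also what converts the Gaussian Fourier coefficients $U_k$ of the theta series into the Bessel sums $W_k$ defining $V_k$ (Lemma~\ref{lem:integral-Uk-stuff}); your sketch conflates $U_k$ with $V_k$.

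Second, the sum over $d \mid d_B N$ and the lattices $R^{(d)}$ do not arise from choosing $\phi_{\fin}$ as a linear combination of partial-dual indicators, nor does $c_f(d)$ come from a local Whittaker matching. In the paper the finite test function is simply $\mathbf{1}_{\hat{R}}$ (up to volume). The $R^{(d)}$ appear because computing $\theta_{z_1,z_2}|_k \tau_d$ via the Weil representation applies the local Fourier transform at primes $p \mid d$, and $\mathcal{F}(\mathbf{1}_{R_p})$ is supported on $R_p^*$. Correspondingly, $c_f(d)$ is just the product of the Atkin--Lehner factor $\mu(d)/(d\cdot a_f(d))$ governing $f|_k \tau_d$ with the Weil-representation factor $\mu((d,d_B))/d$ governing $\theta_{z_1,z_2}|_k \tau_d$, summed over the $d$ cosets above that cusp. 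What you attribute to local representation theory is in the paper a direct Fourier-expansion computation at the cusps (Theorem~\ref{thm:zeroth-fourier-coefficient-of-integrand}).
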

\begin{proof}  See Appendix \ref{sec:comp-shim-lift}. \end{proof}

\begin{remark}\label{rmk:known-petersson-norm}
  Theorem \ref{thm:rough-main-2} normalizes
  $f_B$ to have known Petersson norm (i.e., equal to that of
  $f$).
  This feature will be convenient in some expected
  applications.
  The Petersson norm of $f$ may be computed in a variety of ways
  (see e.g. Example \ref{example:computing-petersson-norms}).
\end{remark}

\subsection{Weighted sums over lattices}\label{sec:weighted-sums-over}
In this subsection we define $V_k(L,t)$,
which appeared
in  the statement of
Theorem \ref{thm:rough-main-2}.
Let $k$ be a positive even integer,
and let $L$ be a lattice in $M_2(\mathbold{R})$.
For example, $M_2(\mathbold{Z})$ is a lattice
in $M_2(\mathbold{R})$.
For each positive real $x$,
define the infinite sum of Bessel functions
\[
W_k(x)
:=
2^{1-k} \sum_{n \in \mathbold{N}}
n^k ( n x K_{k-1}(n x) - K_k(n x)).
\]
We refer the reader interested in how
$W_k$ arises in nature
to Appendix \ref{sec:comp-shim-lift},
and especially (\ref{eq:35}).

There is a unique $\mathbold{R}$-linear map
$\iota : \mathbold{C} \rightarrow M_2(\mathbold{R})$
sending $i$ to $\left(
  \begin{smallmatrix}
    0&1\\
    -1&0
  \end{smallmatrix}
\right)$.
Let $\eps = \left(
  \begin{smallmatrix}
    1&\\
    &-1
  \end{smallmatrix}
\right)$.  Then $\eps^2 = 1$ and $\eps \iota (z) \eps = \iota
(\bar{z})$ for all $z \in \mathbold{C}$.  We have an orthogonal
(with respect to the trace pairing) decomposition $M_2(\mathbold{R}) =
\iota(\mathbold{C}) \oplus \eps \iota(\mathbold{C})$,
which gives rise to
maps $X, Y : M_2(\mathbold{R})
\rightarrow \mathbold{C}$
by requiring that
$\alpha = \iota (X(\alpha)) + \eps \iota (Y(\alpha))$
for all $\alpha \in M_2(\mathbold{R})$.
As functions on $M_2(\mathbold{R})$,
we have $\det = |X|^2 - |Y|^2$.
Set $P := |X|^2 + |Y|^2$.
Explicitly,
\begin{equation*}
  X(\alpha)
  = \frac{
    a+d + i(b-c)
  }{2},
  \,\,
  Y
(\alpha)
= \frac{a - d + i (b + c )}{2},
\,\,
P(\alpha) =
\frac{a ^2 + b ^2 + c
    ^2 +d ^2}{2}
\end{equation*}
for $\alpha = \left(
  \begin{smallmatrix}
    a &b\\
    c&d
  \end{smallmatrix}
\right) \in M_2(\mathbold{R})$.

Finally, for each positive real $t$, set
\begin{equation}\label{eq:17}
  V_k(L,t)
  :=
  \sum _{
    \substack{
      \alpha \in L \\
      \det(\alpha) = t
    }
  }
  e^{i k \arg X(\alpha)}
  W_k(4 \pi |X(\alpha)|).
\end{equation}
Here
$e^{i k \arg X(\alpha)}
= (X(\alpha)/|X(\alpha)|)^k$.
\begin{remark}
  \label{rmk:conv-rapid-sum-alpha}
  To see that the sum
  defining $V_k(L,t)$ converges rapidly,
  note that $W_k$ is bounded
  and decays exponentially,
  and that for each $x \geq 1$,
  there are at most
  $O_{L}(x^4)$ elements
  $\alpha \in L$ for which
  $\det(\alpha) > 0$ and
  $|X(\alpha)| \leq x$.
  The latter claim follows from
  the inequality
  $2|X(\alpha)|^2 \geq 2|X(\alpha)|^2 - \det(\alpha)
  = P(\alpha)$.

  When computing \eqref{eq:17} numerically,
  one restricts the sum over $\alpha$ to those
  elements of $L$ for which the positive
  definite quadratic form $P$
  takes values smaller than
  some fixed large cutoff.
\end{remark}
\begin{example}
  \label{ex:m2z-weighted-sum-spelled-out}
  If $L = M_2(\mathbold{Z})$,
  then for each positive integer $n$ we have
  \[
  V_k(L,n)
  = \sum_{
    \substack{
      a,b,c,d \in \mathbold{Z} \\
      a d - b c = n
    }
  }
  e^{ i k \theta }
  W_{k} \left( 2 \pi r \right),
  \quad \text{ where }
r e^{i \theta} = a + d
  + i (b - c)
  \in \mathbold{C}^\times.
  \]
\end{example}
\begin{remark}
  Unlike in the case of classical (parabolic)
  Fourier expansions, the special functions $W_k$ that
  arise in the definition \eqref{eq:17},
  and hence in
  Theorem \ref{thm:rough-main-2},
  are not representation-theoretically significant.
  Indeed, they are somewhat arbitrary; other choices are
  possible,
  although
  ours is convenient owing to its
  direct expression in terms of the classical $K$-Bessel
  function (see
  \sec\ref{sec:comp-integr-autom} and
  Appendix \ref{sec:comp-shim-lift}).
  They are instead analogous
  to the test functions that arise
  in the approximate functional equation method
  for computing values of $L$-functions outside their domain of
  absolute
  convergence (see \cite[\S 5.2]{MR2061214}).
  In fact, these is nonempty overlap between
  that method and our own (see
  Example \ref{example:computing-petersson-norms}), although
  neither is a special case of the other.
\end{remark}

\subsubsection*{A generalization relevant for computing Shimura--Maass derivatives}
The remainder
of this subsection
will be needed
only in the statement of Theorem \ref{thm:ext-maass-deriv}.
Define derivations
$\delta_1,\delta_2$ on the formal power series
ring
$\mathbold{Z}[[x,\bar{x},y,
\bar{y}]]$
via the table
\begin{center}
  \begin{tabular}{|l|l|l|l|l|}
    \hline
    $j$  & $\delta_j x$ & $\delta_j \bar{x}$ & $\delta_j y$ & $\delta_j \bar{y}$
    \\ \hline \hline
    $1$ & $0$ & $- \bar{y}$ & $- x$ & $0$
    \\ \hline
    $2$ & $0$ & $y$ & $0$ & $x$
    \\ \hline
  \end{tabular}
\end{center}
For nonnegative integers
$j_1$ and $j_2$,
define a polynomial $p_{j_1,j_2}$
via
\begin{equation*}
  p_{j_1,j_2} =
  x^{-j_1 + j_2} 
  e^{(x \bar{x} + y \bar{y})/2}
  \delta_1^{j_1}
  \delta_2^{j_2}
  e^{-(x \bar{x} + y \bar{y})/2}
\end{equation*}
The definition should be regarded as analogous
to that of the
Hermite polynomials
$H_n(x)
= (- 1) ^n e ^{x ^2 }
\frac{d ^n }{d x ^n } e ^{- x ^2 }.$
An induction shows that $p_{j_1,j_2}$
has integral coefficients and
is independent of $x$ and $\bar{x}$,
so that we may write
\[
p_{j_1,j_2}=
\sum_{e_1,e_2 \in \mathbold{Z}_{\geq 0}}
C_{e_1,e_2}^{j_1,j_2} y^{e_1} \bar{y}^{e_2}
\]
for some integers
$C_{e_1,e_2}^{j_1,j_2}$.
Now set
\begin{equation}\label{eq:19}
  V_k^{j_1,j_2}(L,t)
  := \sum_{e_1,e_2 \in \mathbold{Z}_{\geq 0}}
  (4 \pi )^{
    (j _1 + j _2 + e _1 + e _2 )/2
  }
  C_{e_1,e_2}^{j_1,j_2} V_{k;e_1,e_2}^{j_1,j_2}(L,t),
\end{equation}
where
\[
V_{k;e_1,e_2}^{j_1,j_2}(L,t)
:=
\sum_{\substack{
    \alpha \in L \\
    \det(\alpha) =t
  }
}
e^{i (k + j_1 + j_2) \arg X(\alpha)}
Y(\alpha)^{e_1}
\overline{Y}(\alpha)^{e_2}
W_{k + \frac{j_1 + j_2 + e_1 + e_2}{2}} \left( 4 \pi \lvert
  X(\alpha) \rvert \right).
\]

\begin{remark}
  One can express $p_{j_1,j_2}$
  in terms of
  Laguerre
  polynomials
  \begin{equation}\label{eq:9}
    L_{\ell}^{(\alpha)}(x)
    = \sum _{i = 0}^{\ell}
    \frac{(-x)^i}{i!}
    \binom{\ell+\alpha }{\ell-i}
  \end{equation}
  as
  $p_{j_1,j_2}(y,\bar{y})
  = j ! (- y)^{j_2-j} \bar{y}^{j_1-j}
  L_{j}^{(J-j)} \left( y \bar{y} \right)$
  with
  $j = \min(j_1,j_2)$
  and $J = \max(j_1,j_2)$.
  This identity is not essential for our purposes,
  so we omit the proof.
\end{remark}
\begin{example}
  ~
 $p_{0,0} = 1$, hence
    $V_{k}^{0,0}(L,t) = V_k(L,t)$.
   $p_{0,j_2} = (- y)^{j_2}$. $p_{j_1,0} =  \bar{y}^{j_1}$.
  $p_{1,j_2} = (-1)^{j_2} y^{j_2-1} (y \bar{y} - j_2)$.
    $p_{j_1,1} = -\bar{y}^{j_1-1}(y \bar{y} - j_1)$.
   $p_{j,j}
    = j! L_j^{(0)}(y \bar{y})
    = \left. e^t \frac{d^j}{d t^j}
      (t^j e^{-t}) \right|_{t=y \bar{y}}$.
\end{example}

\subsection{Locally dual lattices}
\label{sec:part-dual-latt}
Let $B$ be an indefinite rational quaternion algebra,
and let $R \subset B$ be an Eichler order of level $N$.
We define here
for each divisor $d$
of $d_B N$
the auxiliary lattice $R^{(d)}$
attached to $R$ that appeared in the statement
of Theorem \ref{thm:rough-main-2} in \S\ref{sec:extens-shim-maass}.

Let $R^*$ be the lattice dual to $R$ with respect
to the trace pairing.
Thus, $R$ is the set of all $\alpha \in B$ for which $\langle
\alpha,r \rangle
\in \mathbold{Z}$ for all $r \in R$.
We have $R^* \supset R$.

It is a general fact that lattices in vector spaces over
$\mathbold{Q}$
may be characterized locally.
In our context, this means more precisely the following:
Let $p$ be a prime number,
and set
$B_p = B \otimes_\mathbold{Z}  \mathbold{Z}_p = B \otimes_\mathbold{Q}
\mathbold{Q}_p$.
For each lattice $L < B$
(such as $L = R$ or $L = R^*$),
set $L_p = L \otimes_\mathbold{Z}  \mathbold{Z}_p$.
Then $L_p$ is lattice in $B_p$.
If $L, L'$ are lattices in $B$
that satisfy $L_p = L'_p$ for all primes $p$,
then $L = L'$.

We may characterize the dual lattice $R^*$ as follows:
\begin{itemize}
\item If $p \nmid d_B N$,
  then $R_p = R_p^*$.
\item If $p \mid d_B$,
  then $B_p$ is a discrete valuation ring,
  $R_p$ is the unique maximal order in $B_p$,
  and $R_p^*$ is the inverse of its unique maximal ideal.
\item If $p \mid N$,
  there exists an isomorphism
  $\iota : B_p \rightarrow M_2(\mathbold{Q}_p)$
  taking $R_p$ to
  the Eichler order
  $\left(
    \begin{smallmatrix}
      \mathbold{Z}_p&\mathbold{Z}_p\\
      N \mathbold{Z}_p&\mathbold{Z}_p
    \end{smallmatrix}
  \right)$.
  We then have
  $\iota (R_p^*)
  = 
  \left(
    \begin{smallmatrix}
    \mathbold{Z}_p& N ^{-1} \mathbold{Z}_p\\
    \mathbold{Z}_p&\mathbold{Z}_p
  \end{smallmatrix}
  \right)$.
\end{itemize}

For each divisor $d$ of $d_B N$,
there is a unique intermediate
lattice $R^* \supset R^{(d)} \supset R$ for which
\[
R^{(d)}_p
= \begin{cases}
  R_p
  &
  \text{ if } p
  \nmid d,\\
  R^*_p &
  \text{ if }
  p \mid d
\end{cases}
\]
as lattices in $B_p$.
We have $R^{(1)} = R$ and $R^{(d_B N)} = R^*$,
while
$R^{(d)}$ is indeed locally dual to $R$ at the prime divisors of
$d$.

\subsection{Extension to Shimura--Maass derivatives}
\label{sec:extens-shim-maass}
A generalization of Theorem \ref{thm:rough-main-2} gives a formula for
arbitrary
Shimura--Maass 
derivatives of a newform $f_B$.
These determine the geodesic polar expansion
of $f_B$ about an arbitrary point,
which is useful for rapidly evaluating
$f_B$ at a large number of points.

Let $k$ be a positive even integer,
and $\Gamma < \SL_2(\mathbold{R})$ a lattice.
Let
\[
\delta_k :=
y ^{- k/2 - 1 } 
\left( 
  2 i y \frac{\partial }{\partial z}
  + \frac{k}{2}
\right)
y ^{k/ 2}
=
(-4 \pi ) \cdot 
\frac{1}{2 \pi i }
\left( \frac{\partial }{\partial z}
  + \frac{k }{2 i y}\right)
\]
be the Shimura--Maass raising operator sending (not
necesssarily holomorphic) automorphic functions
on $\Gamma$
of weight $k$
to
those of weight $k+2$.
This agrees with the action
by the element
$\frac{1}{2}\left(
  \begin{smallmatrix}
    1&i\\
    i&-1
  \end{smallmatrix}
\right)$
of the complexified Lie algebra of $\SL_2(\mathbold{R})$
under the usual identification of automorphic functions
$f$
of weight $k'$ ($k' = k$ or $k' = k+2$) on $\Gamma$
with functions
$F(g) = (f|_{k'} g)(i)$
on $\SL_2(\mathbold{R})$
satisfying
$F
\left( \gamma g
  \left(
    \begin{smallmatrix}
      \cos \theta & \sin \theta \\
      - \sin \theta & \cos \theta 
    \end{smallmatrix}
  \right)
\right)
= F(g) e^{i k' \theta}$
for all $\gamma \in \Gamma, \theta \in \mathbold{R} / 2 \pi \mathbold{Z}$.
For notational simplicity,
we henceforth omit the subscript $k$ from $\delta_k$
and write $\delta^{\ell}$ for the $\ell$-fold
composition $\delta_{k + 2l - 2} \circ \dotsb
\circ \delta_{k+2} \circ \delta_k$.

\begin{example}
  If $\Gamma$ contains $\left(
    \begin{smallmatrix}
      1&1\\
      &1
    \end{smallmatrix}
  \right)$
  and
  $f \in S_k(\Gamma)$
  has the Fourier expansion
  $f(z) = \sum_{n \in \mathbold{N}} a_n q^n$,
  then its first couple of Shimura--Maass derivatives are
  $\delta f(z)
    = \sum_{n \in \mathbold{N}} \left( \frac{k}{y} - 4 \pi n \right)
    a_n q^n$
    and
    $\delta^2 f(z)
    = \sum_{n \in \mathbold{N}} \left( \frac{k(k+1)}{y^2} -
      \frac{2(k+1)(4 \pi n)}{y}
      + (4 \pi n)^2
    \right)
    a_n q^n$.
  For $\ell \in \mathbold{Z}_{\geq 0}$,
  an induction confirms that
  $\delta^{\ell} f$
  is the weight $k + 2 \ell$
  automorphic function with Fourier expansion
  $\delta^{\ell} f(z)
  =
  y^{-\ell} \ell!
  \sum_{n \in \mathbold{N}}
  L_{\ell}^{(k-1)}(4 \pi n y)
  a_n q^n$,
  where $L_{\ell}^{(k-1)}$
  is a Laguerre polynomial as in \eqref{eq:9}.
\end{example}

\begin{theorem}\label{thm:ext-maass-deriv}
  With notation and assumptions
  as in Theorem \ref{thm:rough-main-2},
  and $V_k^{j_1,j_2}$ as in \eqref{eq:19},  
  we have
  \begin{equation*}
    y_1^{\frac{k}{2} + j_1}
    y_2^{\frac{k}{2} + j_2}
    \overline{\delta^{j_1} f_B(z_1)}
    \delta^{j_2} f_B(z_2)
    =
    \sum_{n \in \mathbold{N} } a_f(n)
    \sum_{d|d_B N}
    c_f(d)
    V_k^{j_1,j_2}
    \left(
      \sigma_{z_1}^{-1} R^{(d)} \sigma_{z_2}
      ;
      \frac{n}{d}
    \right).
  \end{equation*}
\end{theorem}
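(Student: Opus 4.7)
The plan is to derive Theorem \ref{thm:ext-maass-deriv} from Theorem \ref{thm:rough-main-2} by applying the Shimura--Maass raising operator $\delta$ iteratively in the variable $z_2$ and its complex conjugate in the variable $z_1$, then tracking the effect term by term. On the left, these operators produce $y_1^{k/2+j_1} y_2^{k/2+j_2} \overline{\delta^{j_1} f_B(z_1)}\,\delta^{j_2} f_B(z_2)$ by the definition of $\delta$ recalled in \S\ref{sec:extens-shim-maass}. The substance of the proof is then to compute how the iterated operators act on each summand
\[
V_k(\sigma_{z_1}^{-1} R^{(d)} \sigma_{z_2}; n/d) = \sum_{\substack{\beta \in R^{(d)} \\ \det(\beta) = n/d}} e^{ik\arg X(\alpha)} W_k(4\pi |X(\alpha)|), \qquad \alpha := \sigma_{z_1}^{-1} \beta \sigma_{z_2}.
\]

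The first step is a Lie-algebraic interpretation of $\delta$: under the identification of weight-$k$ automorphic functions with functions on $\SL_2(\mathbold{R})$ of the specified $K$-type, $\delta$ is right translation by $\mathbf{r} := \tfrac{1}{2}\bigl(\begin{smallmatrix} 1 & i \\ i & -1 \end{smallmatrix}\bigr) \in \mathfrak{sl}_2(\mathbold{C})$. Since the summand above depends on $z_2$ only through right multiplication by $\sigma_{z_2}$, applying $\delta$ in $z_2$ amounts to right multiplying $\alpha$ by $\mathbf{r}$. In the $X, Y$ coordinates of \S\ref{sec:weighted-sums-over}, a direct calculation expresses this as a first-order differential operator in $X(\alpha), \overline{X(\alpha)}, Y(\alpha), \overline{Y(\alpha)}$; my expectation is that, after the normalization $4\pi$, this operator is exactly the derivation $\delta_2$ tabulated in \S\ref{sec:weighted-sums-over}. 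The operator conjugate to $\delta$ at $z_1$ is identified analogously with $\delta_1$, with the asymmetry between the two tables reflecting the distinction between right multiplication by $\sigma_{z_2}$ and left multiplication by $\sigma_{z_1}^{-1}$, mediated by the involution $\eps \iota(z) \eps = \iota(\bar z)$.

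Given this dictionary, iterated application of $\delta$ in $z_2$ and of its conjugate in $z_1$ reduces to applying $\delta_1^{j_1}\delta_2^{j_2}$ formally in $X(\alpha), \overline{X(\alpha)}, Y(\alpha), \overline{Y(\alpha)}$. The product of the phase $e^{ik\arg X(\alpha)} = (X/|X|)^k$ with $W_k(4\pi|X|)$ plays the role of a Gaussian-times-Bessel factor $e^{-(x\bar x + y\bar y)/2}$; the polynomial $p_{j_1,j_2}$ is precisely engineered so that, after factoring out $X^{-j_1+j_2}$ (absorbed into the shifted phase $e^{i(k+j_1+j_2)\arg X}$), what remains is a polynomial in $Y, \bar Y$ whose monomial coefficients are the $C^{j_1,j_2}_{e_1,e_2}$. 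The shift in the Bessel index from $W_k$ to $W_{k+(j_1+j_2+e_1+e_2)/2}$ comes from iterated use of the classical identity $(d/dx)(x^k K_k(x)) = -x^k K_{k-1}(x)$, while the power $(4\pi)^{(j_1+j_2+e_1+e_2)/2}$ accounts for the chain rule applied to $x = 4\pi|X|$. Summing over $e_1, e_2$ yields $V_k^{j_1,j_2}$ as defined in (\ref{eq:19}), and the theorem follows.

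The main obstacle is the precise dictionary asserted in the second paragraph: matching right multiplication by $\mathbf{r}$ at $z_2$ and its conjugate at $z_1$ to the formal derivations $\delta_2, \delta_1$, including the exact placement of $4\pi$, the careful handling of the $\eps$-involution, and the bookkeeping of the Bessel-index shift. Once this dictionary is verified by direct computation in a small case and then propagated by induction on $j_1 + j_2$, the rest of the argument is a routine combination with the base case provided by Theorem \ref{thm:rough-main-2}.
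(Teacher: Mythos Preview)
Your proposal is correct and follows exactly the approach the paper takes: the paper's own proof consists of the single sentence ``This is obtained from Theorem \ref{thm:rough-main-2} by differentiating,'' with details omitted and only a remark that the formula has passed numerical tests. Your sketch of how the derivations $\delta_1, \delta_2$ arise from the Lie-algebraic action of the raising operator on the $X,Y$ coordinates, and how the polynomials $p_{j_1,j_2}$ and Bessel-index shifts emerge, is in fact considerably more detailed than anything the paper provides.
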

\begin{proof}  See Appendix \ref{sec:comp-shim-lift}. \end{proof}

\subsection{Complements}

\subsubsection*{Specialization to CM points}
The following brief remarks
are not necessary for general comprehension of the paper.
Let $K$ be an imaginary quadratic field.
Suppose that $z = z_1 = z_2 \in \mathbold{H}$
is the fixed point of some
embedding $K \hookrightarrow B$.
Then
the lattices
$\sigma_{z}^{-1} R^{(d)} \sigma_{z}$
arising in Theorems \ref{thm:rough-main-2} and \ref{thm:ext-maass-deriv}
decompose
as ``sums of
translates of
products''
of quadratic lattices tailored
to the orthogonal decomposition
$B = K \oplus
K^\perp$ afforded by the trace pairing
(see \cite{prasanna-appendix-B-prasanna-notes} for a 
more precise discussion of related issues),
leading to a pleasant formula for
$\overline{\delta^{j_1} f_B(z)} {\delta^{j_2} f_B(z)}$
involving sums
of the shape
\[
\mathop{\sum \sum}_{
  \substack{
    m_1, m_2 \geq 0 \\
    \ell_1 m_1 > \ell_2 m_2
  }
}
a_f(\ell_1 m_1 - \ell_2 m_2)
r_1(m_1)
r_2(m_2)
W_{k + \frac{j_1 + j_2 + e_1 + e_2}{2}}
(4 \pi \sqrt{m_1}).
\]
Here $\ell_1$ and $\ell_2$ are positive integers,
while $r_1(m_1)$ and $r_2(m_2)$ are the Fourier coefficients
of certain imaginary quadratic theta series given by
\[
r_1(m_1)
=
\sum_{\substack{\alpha \in \mathfrak{a}_1 \\
    |\alpha|^2 = m_1
  }
}
\left( \frac{\alpha }{|\alpha|} \right)^{k + j_1 + j_2},
\quad 
r_2(m_2)
=
\sum_{\substack{\alpha \in \mathfrak{a}_2 \\
    |\alpha|^2 = m_2
  }
}
\alpha^{e_1} \overline{\alpha }^{e_2}
\]
for some (translates of) lattices $\mathfrak{a}_1, \mathfrak{a}_2$ in $K$.
We do not state any precise forms of such identities here,
but remark that we have implemented their derivation
algorithmically.
It would be interesting to compare what
one gets in this way when
$j_1 = j_2$
with what one would get
via an explicit form of Waldspurger's formula
and an approximate functional equation.

In this way, one simplifies the problem of evaluating
$V_k(L;t)$ from one of enumerating
short vectors in a rank
four
quadratic lattice over $\mathbb{Q}$
to one of enumerating short vectors in a pair of
rank one
Hermitian lattices over $K$.

\subsubsection*{Remarks on assumptions}
We have restricted $N$ to be squarefree in our formulas
only because our method of choice requires knowledge of the
Fourier expansion of $f$ at each cusp of $\Gamma_0(d_B N)$
(see \sec\ref{sec:method-proofs}).
The problem of writing down such expansions for forms of non-squarefree level, while not altogether trivial,
seems orthogonal to the primary purposes of this paper
(see e.g. \cite[\S1.9]{PDN-AP-AS-que} for further discussion).
For weights $k > 2$, one can use the holomorphic projection
method described in \sec\ref{sec:some-cand-appr}
to obtain formulas valid for general levels
involving the Fourier expansion of $f$ only at the cusp $\infty$.
See Remark \ref{rmk:holom-proj-wt-2}
for some musings on whether
this method can be extended to the crucial case $k=2$.

\section{Numerical examples}\label{sec:numerical-examples}
In \cite{nelson-appendix-C-prasanna-notes}, we gave several examples of
computations of the values and derivatives of modular forms on
compact Shimura curves
using a precursor of the method presented thus far.
For example, we checked the rationality of their absolute values
at CM points up to powers
of known real periods.

Shortly after \cite{nelson-appendix-C-prasanna-notes}
was presented, we improved the method
to its present form and
worked out some numerical examples
along the lines of the motivation discussed
in \sec\ref{sec:motiv-numer-comp}.
In this section, we record several such examples.

In more detail, this section is organized as follows.
In \S\ref{sec:setting}, we recall
how given a
(modular) elliptic curve $E_{/\mathbb{Q}}$
and
an imaginary quadratic discriminant $D$
satisfying certain compatibility conditions,
together with an auxiliary Atkin-Lehner
operator $\tau$,
one can attach
a point $P(D, \tau) \in E(H_D)$
(with $H_D$ the ring class field attached to $D$)
which is well-defined
up to automorphisms of $E$ and Galois conjugacy.
We formulate
a precise sense in which it is meaningful to compute such
points (Problem \ref{prob:compute-stuff}).
In \S\ref{sec:numerical-examples-method},
we recall how such computation
reduces to the evaluation of modular forms on Shimura curves.
In \S\ref{sec:numerical-examples-results},
we record a small sample of numerical results
obtained using the formulas stated in \S\ref{sec:explicit-formula}.

\subsection{Setting}\label{sec:setting}
Let $B$ be an indefinite rational quaternion algebra,
let $N$ be a positive integer coprime to $d_B$,
and let $R \subset B \hookrightarrow M_2(\mathbold{R})$
be an Eichler order and fixed real embedding
giving rise to $\Gamma ' = \Gamma_0^{d_B}(N) = R \cap
\SL_2(\mathbold{R})$
as in \secs\ref{sec:quaternion-algebras2}
and \ref{sec:newf-shim-curv}.
Let $X_B := X_0^{d_B}(N)$ be the Shimura curve
attached to $\Gamma '$ as in \sec\ref{sec:newf-shim-curv}.

\subsubsection*{CM points}
Let $D$ be a negative quadratic discriminant,
so that $D = d_K c^2$
for some $c \in \mathbold{N}$
and some imaginary quadratic field  $K$ of discriminant $d_K < 0$.
There is a unique (up to isomorphism)
quadratic order $\mathcal{O} = \mathcal{O}_D$
of discriminant $D$,
constructed as $\mathcal{O} = \mathbold{Z} [c (d_K+\sqrt{d_K})/2]$.
Let $H_D/K$ be the
ring class
extension of $K$ attached to $\mathcal{O}$,
so that class field theory gives a
natural isomorphism $\Art : \Pic(\mathcal{O}) \rightarrow
\Gal(H_D/K)$.
Let $h(D) = \# \Pic(\mathcal{O})
= [H_D: K]$ be the class number of $D$.

Call an embedding $\mathcal{O} \hookrightarrow R$
\emph{optimal}
if its extension to $K \hookrightarrow
B$
satisfies $K \cap R = \mathcal{O}$.
Let $\CM_D(X_B)$ denote the set of images in $X_B$
of fixed points in $\mathbold{H}$
for optimal embeddings
$\mathcal{O}  \hookrightarrow R$
(recall that $R \hookrightarrow
M_2(\mathbold{R})$ is fixed).
There is a faithful action
of $\Pic(\mathcal{O})$ on
the set of optimal embeddings $\mathcal{O} \hookrightarrow R$,
which descends to a faithful action on
$\CM_D(X_B)$ that we denote by
$\zeta \cdot \mathfrak{a}$ for $\zeta \in \CM_D(X_B)$ and
$\mathfrak{a} \in \Pic(\mathcal{O})$:
if $\zeta$ is fixed by an optimal embedding $j : \mathcal{O} \hookrightarrow R$,
and $\gamma \in B \cap \GL_2(\mathbold{R})^+$
is chosen to satisfy $\gamma j(\mathfrak{a}) R = R$,
then $\zeta \cdot \mathfrak{a} = \gamma \zeta$.

\subsubsection*{Atkin--Lehner operators}
Let $R_+ = B \cap \GL_2(\mathbold{R})^+$
be the monoid of positive norm elements in $R$.
Call each $\tau \in R_+$
that normalizes $\Gamma '$
an \emph{Atkin--Lehner operator}.
The image in $\Aut(\Gamma' \backslash \mathbold{H})$
of the Atkin--Lehner operators
is a group that we denote by $\AL(\Gamma ')$.
We have
$\AL(\Gamma ') \cong (\mathbold{Z}/2)^{\omega(d_B N)}$, where $\omega (d_B N)$ is the number of prime factors
of $d_B N$,
with representatives $\tau_d  = \prod_{p | d} \tau_p$
indexed
by the positive squarefree divisors
$d$ of $d_B N$, where:
\begin{itemize}
\item For $p \mid d_B$,  $\tau_p \in R_+$ is an arbitrary
  element
  of norm $p$.
\item For $p^\alpha \mid \mid N$, $\tau_p \in R_+$
  is an element of norm $p^\alpha$
  that maps
  to $\left(
    \begin{smallmatrix}
      0&-1\\
      p^\alpha &0
    \end{smallmatrix}
  \right)$
  under a surjection
  $R \rightarrow M_2(\mathbold{Z}/p^{\beta})$
  taking
  $R$
  to $\left(
    \begin{smallmatrix}
      \mathbold{Z}/p^{\beta} & \mathbold{Z}/p^{\beta}\\
      p^\alpha  \mathbold{Z}/p^{\beta} & \mathbold{Z}/p^{\beta}
    \end{smallmatrix}
  \right)$
  for some $\beta > \alpha $.
\end{itemize}

\subsubsection*{Canonical models}
We regard $X_B$ as an algebraic curve over $\mathbold{Q}$ with
respect to its canonical model as in \cite[\S9]{MR0314766},
characterized as follows:
for each negative quadratic discriminant $D$
and each $\zeta \in \CM_D(X_B)$,
one has $\zeta \in X_B(H_D)$;
moreover, for each $\mathfrak{a} \in \Pic(\mathcal{O}_D)$,
one has $\zeta \cdot \mathfrak{a} = \zeta^{\Art(\mathfrak{a})}$.
The group
$\AL(\Gamma ')$ acts on $X_B$ via maps defined
over $\mathbold{Q}$,
hence  preserving $\CM_D(X_B)$
and commuting with $\Gal(H_D/K)$.
The group $\AL(\Gamma ') \times \Gal(H_D/K)$ acts
transitively on $\CM_D(X_B)$.

\subsubsection*{Heegner points}
Let $E$ be an elliptic curve over $\mathbold{Q}$
of conductor $d_B N$,
and let $\mathcal{O} = \mathcal{O}_D \subset K = \mathbold{Q}
(\sqrt{D})$
be an imaginary
quadratic order of discriminant $D$.
We assume that
$N$ is squarefree,
that $c$ is prime to $N$,
and that $\CM_D(X_B) \neq \emptyset$.
In view of the former assumptions,
the latter holds if and only if
\begin{enumerate}
\item the prime divisors of $d_B$ do not split $K$,
\item the prime divisors of $N$ split $K$, and
\item $c$ is prime to $d_B$.
\end{enumerate}
Note that these conditions determine $d_B$ in terms
of $E$ and $K$.

The modularity theorem,
the
Jacquet--Langlands correspondence
and Faltings's isogeny theorem
imply
that there is a surjective $\mathbold{Q}$-map
$\varphi : \Jac X_B \rightarrow E$
arising, as in \sec\ref{sec:relat-probl-comp},
from some newform $f_B \in \mathcal{F}_2(\Gamma ')$.
Such maps are not unique,
since post-composing one with a $\mathbold{Q}$-isogeny
$E \rightarrow E$
gives another,
but we may and shall pin down the $\Aut(E)$-orbit
of $\varphi$ 
by requiring that it not factor through a
$\mathbold{Q}$-isogeny $E \rightarrow E$ of degree $> 1$.

To a CM point $\zeta \in \CM_D(X_B)$
and an Atkin--Lehner operator $\tau \in \AL(\Gamma ')$,
we attach a point
$P(D,\tau) \in E(H_D)$
obtained as the image
of $[\tau \zeta] - [\zeta] \in \Pic^0(X_B)(H_D) \cong \Jac(X_B)(H_D)$
under
$\varphi$.
Although $P(D,\tau)$ depends upon $\zeta$,
its
orbit under $\Aut(E) \times
\Gal(H_D/K)$
depends only upon $D$ and $\tau$.
To classify such orbits conveniently, we assume that
$E$ is given by an affine equation $y^2 + a_1 x y + a_3 y = x^3
+ a_2 x^2 + a_4 x + a_6$ with coefficients in $\mathbold{Q}$
(so that the $\{\pm 1\}$-orbits on $E$ are classified
by $x$-coordinates),
and also that $E$ admits complex multiplication by neither
$\mathbold{Q}(i)$ nor $\mathbold{Q}(\sqrt{-3})$
(so that $\Aut(E) = \{\pm 1\}$).\footnote{The latter
condition is imposed strictly for convenience
(see \cite[\S6.1]{MR0314766}).}
The Galois orbit of an algebraic number is
classified by its minimal polynomial.
Letting $x(P(D,\tau))
\in H_D$ denote the $x$-coordinate of $P(D,\tau)$, the
following computational problem is then meaningful:
\begin{problem}\label{prob:compute-stuff}
  For $(E, D, \tau)$ as above,
  find the minimal polynomial
  of $x(P(D,\tau))$.
\end{problem}

The novelty here is that we are able
to address this problem via archimedean analysis even when
$d_B \neq 1$
(see \sec\ref{sec:motiv-numer-comp}).

\subsection{Method}\label{sec:numerical-examples-method}
Our method for addressing Problem \ref{prob:compute-stuff}
is the natural one suggested by
the content of \sec\ref{sec:relat-probl-comp}
and Theorem \ref{thm:rough-main-2}.
Let the newform
$f_B \in \mathcal{F}_k(\Gamma ')$
realize
$\varphi : \Jac X_B \rightarrow E$,
as in \sec\ref{sec:relat-probl-comp}.
Suppose that $f_B$
is compatibly normalized with its Jacquet--Langlands lift $f$ 
as in Definition \ref{defn:compatibly-normalized},
so that Theorem \ref{thm:rough-main-2} applies.
The Fourier coefficients $a_f(n)$ may be read off
from the arithmetic of $E$ over finite fields
in the usual way.
\begin{enumerate}
\item We find a basis for $H_1(X_B,\mathbold{Z})$,
  which amounts to finding generators for (the abelianization of)
  $\Gamma'$.  In the examples below, we either found such
  generators in the paper \cite{MR2019012} or worked them out
  ``by  hand''.  In more complicated examples, we
  could use the algorithm \cite{MR2541438} as implemented in
  MAGMA.
\item
  Using Theorem \ref{thm:rough-main-2},
  we compute to high precision
  some generators\footnote{In practice,
    it suffices to compute only one generator to high
    precision.}
  for the lattice $\Lambda_{f_B} :=
  \left\{ \int _\gamma f_B(z) \, d z : \gamma \in
    H_1(X_B,\mathbold{Z}) \right\}
  =
  \left\{ \int _{z_0}^{\gamma z_0}
    f_B(z) \, d z :
    \gamma \in \Gamma '
  \right\}$
  ($z_0 \in \mathbold{H}$ fixed).

\item Using the built-in functionality
  of SAGE/PARI,
  we choose a
  Weierstrass parametrization $\mathbold{C}/\Lambda_E \cong
  E(\mathbold{C})$ 
  for $E$.
\item We compute a minimal isogeny $\mathbold{C} / \Lambda_{f_B}
  \rightarrow \mathbold{C} / \Lambda_E$,
  or equivalently, a nonzero complex number $\mu$ for which
  $\mu \Lambda_{f_B}$ is contained in $\Lambda_E$ with minimal
  index.
  Our assumption $\Aut(E) = \{\pm 1\}$
  implies that $\mu$ is determined up to
  $\pm 1$.
\item
  For $\tau \in \AL(\Gamma ')$
  and $\gamma_1, \gamma_2 \in \Gamma '$,
  we compute
  $\int _{\gamma_1 \zeta } ^{\gamma_2 \tau \zeta }
  f_B(z) \, d z$
  via Theorem \ref{thm:rough-main-2}
  and its
   image
  $P(\zeta,\tau)$
  under
  $\mathbold{C} \rightarrow \mathbold{C} / \Lambda_{f_B}
  \xrightarrow{\mu}
  \mathbold{C}/\Lambda_E \cong E(\mathbold{C})$
  via the \emph{elliptic\_exponential} functionality of SAGE/PARI.
  Note that $P(\zeta,\tau)$ is independent of $\gamma_1$
  and $\gamma_2$,
  which we choose to minimize the length
  of the integration contour.
\item
  Let
  $\zeta_1, \dotsc, \zeta_{h(D)}$ be the
  $\Gal(H_D/K)$-orbit of some
  $\zeta \in \CM_D(X_B)$.
  We compute these
  as the fixed points for the $\Pic(\mathcal{O})$-orbit
  of some optimal embedding
  $\mathcal{O} \hookrightarrow R$.
  The minimal polynomial
  of $x(P(D,\tau))$
  divides
  $\prod (t - x(P(\zeta_i,\tau)))
  \in \mathbold{Q}[t]$,
  whose coefficients we compute precisely.
\end{enumerate}

\subsection{Results}\label{sec:numerical-examples-results}
We consider the following
(Cremona-labeled) elliptic
curves:
\begin{center}
  \begin{tabular}{|l|l|}
     \hline
    $E$ & model
    \\ \hline \hline
    $15A_1$
    &
    $y^2+x y + y = x^3 + x^2 - 10 x - 10$
    \\ \hline
    $21A_2$
    &
    $y^2 + x y = x^3 - 49 x - 136$
    \\ \hline
    $35 A_1$
    &
    $y^2 + y = x^3 + x^2 + 9 x + 1$
    \\ \hline
    $26 A_1$
    &
    $y^2 + x y + y = x^3 -5x -8$
    \\ \hline
    $26 B_2$
    &
    $y^2 + x y + y = x^3 - x^2 - 213 x -1257$
    \\ \hline
  \end{tabular}
\end{center}
The table below records some Heegner points that we found on
these curves, all coming from uniformizations by compact Shimura
curves ($d_B \neq 1$).
The points not labeled as torsion have infinite order.
\begin{center}
\begin{tabular}{|l|l|l|l|l|}
 \hline
\quad $E$
&
\quad $D$
&
$h$
&
\, $\tau$
&
\quad minimal polynomial of  $x(P(D,\tau))$
\\ \hline \hline
$15 A_1$
&
$-7$          &   $1$
&
$\tau_3$
&
$x = -4$
\\ \hline
&
& &
$\tau_5$
&
$x = 3$
\text{ ($2$-torsion)}
\\ \hline
&
& &
$\tau_{15}$
&
$x = 4/7$
\\ \hline
&
$-7 \cdot 2^2$
&   $1$
&
$\tau_3$
&
$x = -1108/25 = -5^{-2}  2^2 277^1$
\\ \hline
&
$-40$          &   $2$
&
$\tau_3$
&
$x = -29/32 $ 
\\ \hline
&

& &
$\tau_5$
&
$x = 3$ ($2$-torsion)
\\ \hline
&
$-43$          &   $1$
&
$\tau_3$
&
$x = -1339/256 = - 2^{-8}  13^1 103^1$
\\ \hline
&
& &
$\tau_5$
&
$x = 3$
\text{ ($2$-torsion)}
\\ \hline
&
& &
$\tau_{15}$
&
$x =-79/2107 = - 7^{-2}  43^{-1} 79^1$
\\ \hline
&
$-52$          &   $2$
&
$\tau_3$
&
$2^{10}  5^2 x^2 - 67072 x + 202177$
\\ \hline
&
$-55$          &   $4$
&
$\tau_3$
&
$11x^2 + 2509x - 751$
\\ \hline
&
$-67$          &   $1$
&
$\tau_3$
&
$x = -461899/30976 = -2^{-8} 11^{-2}  127^1  3637^1$
\\ \hline
$21A_2$
&
$-15$          &   $2$
&
$\tau_7$
&
$x = -23/5$
\\ \hline
&
$-39$          &   $4$
&
$\tau_7$
&
$ 13x^2 - 103x + 433$
\\ \hline
&
$-43$          &   $1$
&
$\tau_7$
&
$ x =  -1588/225 = - 3^{-2} 5^{-2} 2^2 397^1$
\\ \hline
&
$-51$          &   $2$
&
$\tau_7$
&
$ x= -116/17 = -  17^{-1} 2^2 29^1$
\\ \hline
&
$-15 \cdot 2^2$
&   $2$
&
$\tau_7$
&
$x = -527/5 = - 5^{-1} 17^1 31^1$
\\ \hline
&
$-67$          &   $1$
&
$\tau_7$
&
$ x= -17972/4225 =  - 5^{-2} 13^{-2} 2^2  4493$
\\ \hline
$35 A_1$
&
$-7$          &   $1$
&
$\tau_5$
&
$x = - 1/7$
\\ \hline
&
$-8$          &   $1$
&
$\tau_5$
&
$x = - 49/8$
\\ \hline
&
$-23$          &   $3$
&
$\tau_5$
&
$23 x ^3 + 5 0 6 2 x ^2 + 3 9 5 1 x + 4 1 2 9 1$
\\ \hline
&
$-7 \cdot 2^2$
&   $1$
&
$\tau_5$
&
$x= -7401/7 = - 7^{-1}  3^1  2467^1$
\\ \hline
&
$-8 \cdot 2^2$
&   $2$
&
$\tau_5$
&
$x = - 49/8$
\\ \hline
&
$-4 3$          &   $1$
&
$\tau_5$
&
$x =-489/688= -{2 ^{-4}  43^{-1}}  { 3^1 163^1}$
\\ \hline
&
$-7 \cdot 3^2$
&   $4$
&
$\tau_5$
&
$7 x^2 - 13 x + 73$
\\ \hline
&
$-67$          &   $1$
&
$\tau_5$
&
$x = -202489/4288 = -{2 ^{-6}  67^{-1}}  { 7^1 28927^1}$
\\ \hline
$26 A_1$
&
$-59 $          &   $3$
&
$\tau_2$
&
$11^4 59^1 x^3 + 2708827 x^2 + 2483001 x + 1553249x$
\\ \hline
&
$-67$ &
$1$ &
$\tau_2$ &
$-2020489/2815675 = - 5^{-2} 41^{-2} 67^{-1} 2020489^1$
\\ \hline
$26 B_2$
& $-24$
& $1$
& $\tau_{13}$
& $-359/12 = -2^{-3} 3^{-1} 359^1$
\\ \hline
&
$-52$
& $2$
& $\tau_{13}$
& point at infinity
\\ \hline
\end{tabular}
\end{center}

We emphasize that although we have listed only the
(minimal polynomial of the) $x$-coordinate,
the $y$-coordinate may be recovered by solving a
quadratic equation.

\begin{remark}
  These points were evaluated numerically
  using a mixture of SAGE/PARI
  and C++/GSL,
  and then recognized via continued
  fractions with the naked eye.
  We do not claim that our results
  are correct,
  but it is likely that they are,
  because in all cases we obtained a point with the expected
  field of definition.
  In principle, one could carry out such computations provably correctly
  by bounding the error terms in our explicit formulas
  and the heights of the points $P(D,\tau)$
  (via Gross--Zagier/Zhang formulas, see
  \cite[\S5]{MR2473878}).
\end{remark}

\begin{example}
  For $(E,D) = (35 A_1,-23)$
  (resp. $(26 A_1, -59)$),
  we have $h(D) = 3$,
  so there are three Atkin--Lehner orbits
  of CM points of conductor $D$
  on the Shimura curve $X_0^{d_B}(1)$
  with $d_B = 35$ (resp. $26$).
  Letting $x_1, x_2, x_3$ denote the
  $x$-coordinates of the resulting points on $E(\mathbold{C})$,
  we compute numerically that
  $23^1 \prod (t-x_i)$ (resp. $11^4 59^1 \prod (t-x_i)$)
  is approximately
  \[
  23  t^3 + 5061.9999982\ldots  t^2 + 3950.9999999\ldots  t + 41290.9999609 \ldots
  \]
  (resp.
  \[
  11^4 59^1 t^3 + 2708826.9996 \dotsc t^2 + 2483000.9994 \dotsc
  t + 1553248.99991 \dotsc.)
  \]
  In each case we obtain approximately the integral-coefficient polynomial
  given
  above,
  whose roots
  generate a cubic subfield of the Hilbert class field $H_{D}$ of $\mathbold{Q} (\sqrt{D})$.
\end{example}
\begin{example}
  We have listed only one example
  in which our computation returned a torsion point for ``non-obvious''
  reasons: $E = 26 B_2$, $D = -52$
  and $\tau = \tau_{13}$,
  for which the rank of
  the quadratic twist $E_D$ is $0$.
\end{example}
\begin{example}
  For $E = 35 A$, $D = -63 = -7 \cdot 3^2$,
  we got a point defined over the ring class field
  $H_{-63} = \mathbold{Q} (\sqrt{- 7}, \sqrt{- 3})$
  of the order $\mathbold{Z} [3 (1 +  \sqrt{-7})/2] \subset \mathbold{Q}(\sqrt{- 7})$ of conductor $3$.
\end{example}


\section{General method}\label{sec:method-proofs}
Throughout this section, we assume the notation and definitions
of \sec\ref{sec:background}.

\subsection{Overview}\label{sec:overview-1}
Let $B$ be an indefinite rational quaternion algebra,
let $N$ be a positive integer coprime to $d_B$,
let $\Gamma ' = \Gamma_0^{d_B}(N)$,
let $R \subset B \hookrightarrow M_2(\mathbold{R})$
be the Eichler order and real embedding for which
$\Gamma ' = R \cap \SL_2(\mathbold{R})$
arises as (the image of) the group of norm one units in $R$,
and let
$\Gamma = \Gamma_0(d_B N)$.
Let $k$ be a positive even integer,
and let $f_B \in \mathcal{F}_k(\Gamma ')$
and $f \in \mathcal{F}_k(\Gamma)$ be compatibly normalized
Jacquet--Langlands
correspondents.

One can compute the values of $f_B$ 
in the following way:
\begin{enumerate}
\item[(I)] Write down an explicit form of the Shimizu correspondence.
  This expresses $f_B$ in the form
  \begin{equation}\label{eq:27}
    (y_1 y_2)^{k/2}
    \overline{f_B(z_1)} f_B(z_2)
    =
    \int _{\Gamma \backslash \mathbold{H} }
    y^k \overline{f(z)} \theta_{z_1,z_2}(z)
    \, \frac{d x \, d y}{y^2},
  \end{equation}
  where $\theta_{z_1,z_2}$ is an explicit non-holomorphic
  theta series
  depending only upon $R \subset B \hookrightarrow
  M_2(\mathbold{R})$
  and $k$.
  Thanks to
  a result
  of Watson \cite[\S 2.1]{watson-2008},
  doing so amounts to an exercise in ``unadelization.'' 
  We define $\theta_{z_1,z_2}$ in \sec\ref{sec:expl-shim-corr-1}
  and show that it satisfies \eqref{eq:27} in
  Appendix \ref{sec:shim-corr-1}.
\item[(II)]
  Compute the Petersson inner product
  on the RHS of \eqref{eq:27}.
  We discuss several approaches
  to this problem in \sec\ref{sec:some-cand-appr}.
  Our preferred approach (Theorem \ref{thm:compute-integral-of-automorphic-form}),
  which leads to the explicit formulas of \sec\ref{sec:explicit-formula},
  requires also the following
  intermediate step:
  \begin{enumerate}
  \item[(I$\tfrac{1}{2}$)]
    Write down the constant term
    in the Fourier expansion of $y^k \overline{f(z)}
    \theta_{z_1,z_2}(z)$
    at each cusp of $\Gamma$.
    We carry this out 
    in Appendix \ref{sec:fourier-expansions}
    when $N$ is squarefree.
  \end{enumerate}
\end{enumerate}

After stating a precise form of the identity \eqref{eq:27} in
\sec\ref{sec:expl-shim-corr-1}, we turn to a discussion
of the general
problem of computing integrals of automorphic
functions (e.g., Petersson inner products).  This discussion
occupies the remainder of \sec\ref{sec:method-proofs} and
serves to motivate Theorem
\ref{thm:compute-integral-of-automorphic-form}, which
gives a formula for rather general such integrals
via roughly a ``hybrid Rankin-Selberg and
approximate functional equation'' approach.
In Appendix \ref{sec:comp-shim-lift},
we apply the
procedure outlined above to prove the formulas stated in
\sec\ref{sec:explicit-formula}.
\begin{remark}
  This procedure applies also to more general automorphic
  quotients
  (such as Shimura curves associated
  to quaternion algebras over totally real fields),
  but we reserve further comments in that direction for a future paper.
\end{remark}

\subsection{Definition of Shimizu theta function}\label{sec:expl-shim-corr-1}
Let notation be as in \sec\ref{sec:overview-1}.
Recall the functions $X$ and $P$
defined in \sec\ref{sec:weighted-sums-over}.
For each lattice $L < M_2(\mathbold{R})$
and positive reals $t$ and $y$,
set
\[
U_k(L;t,y)
:=
\frac{1}{2}
y
\sum_{\substack{
    \alpha \in L \\
    \det(\alpha) = t
  }
}
X(\alpha)^k
e^{- 2 \pi y P(\alpha)}.
\]
Fix $z_1, z_2 \in \mathbold{H}$,
recall the notation
$\sigma_{z_i}$ from \sec\ref{sec:main-formula},
and define\footnote{See Appendix \ref{sec:fourier-expansions}
for the Fourier expansion of $\theta_{z_1,z_2}$
at other cusps.}
\[
\theta_{z_1,z_2}(z)
:=
\sum_{n \in \mathbold{Z}}
U_k(\sigma_{z_1}^{-1} R \sigma_{z_2};n,y)
e(n x).
\]
\begin{theorem}[\cite{watson-2008}, Appendix \ref{sec:shim-corr-1}]
  \label{thm:explicit-shimizu-unadelized}
  We have $\theta_{z_1,z_2}|_k \gamma = \theta_{z_1,z_2}$
  for all $\gamma \in \Gamma$,
  and the identity \eqref{eq:27} holds.
\end{theorem}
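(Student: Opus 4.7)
The plan is to derive both claims --- the $\Gamma$-automorphy of $\theta_{z_1,z_2}$ and the integral identity \eqref{eq:27} --- by unadelizing the Shimizu theta correspondence, following the outline in \S\ref{sec:overview-1}. I would begin on the adelic side. Let $\psi$ be the standard additive character of $\mathbold{A}/\mathbold{Q}$ and let $\omega_\psi$ denote the Weil representation of $\GL_2(\mathbold{A})\times\GO(B)(\mathbold{A})$ on $\mathcal{S}(B(\mathbold{A}))$ associated with the quadratic form $\det$ on $B$. For a factorizable Schwartz function $\Phi=\bigotimes_v\Phi_v$, form the theta kernel
\[
\Theta(g,(h_1,h_2);\Phi)=\sum_{\beta\in B(\mathbold{Q})}(\omega_\psi(g)\Phi)(h_1^{-1}\beta h_2),
\]
which is automorphic for $\GL_2(\mathbold{Q})\times(B^\times\times B^\times)(\mathbold{Q})$. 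Pairing against the adelic newform $\Phi_f$ corresponding to $f$ produces the Shimizu lift on the orthogonal side, which by the theorem of Shimizu equals a scalar multiple of $\overline{\Phi_{f_B}}(h_1)\Phi_{f_B}(h_2)$, where $\Phi_{f_B}$ is the Jacquet--Langlands transfer of $\Phi_f$.

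Next I would plug in the local data of \cite{watson-2008}: at each finite place $v$, $\Phi_v$ is (a suitably normalized) characteristic function of $R\otimes\mathbold{Z}_v$; at the archimedean place, $\Phi_\infty$ is the holomorphic-discrete-series vector whose Weil action by $\sigma_z$ produces $X(\alpha)^k e^{-2\pi y P(\alpha)}$, accounting for the shape of $U_k$. With these choices, Watson's matrix-coefficient calculation pins down the proportionality constant in the Shimizu identity so that, once normalizations are tracked, no extraneous factor appears. To pass from the adelic identity to its classical form, I would invoke strong approximation to write $\GL_2(\mathbold{A})=\GL_2(\mathbold{Q})\cdot\GL_2(\mathbold{R})^+\cdot K_0(d_B N)$, reducing the Petersson integral over $\GL_2(\mathbold{Q})\backslash\GL_2(\mathbold{A})$ to the classical one over $\Gamma\backslash\mathbold{H}$. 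Parameterizing $(h_1,h_2)$ by $\sigma_{z_1},\sigma_{z_2}$ at infinity and by the maximal compact $\prod_v R_v^\times$ at finite places, the condition $h_1^{-1}\beta h_2\in\prod_v R_v$ restricts $\beta$ to the classical lattice $\sigma_{z_1}^{-1}R\sigma_{z_2}$, and isolating the $x$-dependence through the Weil action of the upper unipotent produces the Fourier series $\sum_n U_k(\sigma_{z_1}^{-1}R\sigma_{z_2};n,y)e(nx)$.

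The $\Gamma$-automorphy is then formal: for $\gamma\in\Gamma$, the slash action $\theta_{z_1,z_2}|_k\gamma$ corresponds to translating $g\mapsto g\gamma_\infty$ on the adelic side, whereupon $\GL_2(\mathbold{Q})$-invariance of the theta kernel absorbs $\gamma$ into the summation index $\beta$. The principal obstacle is bookkeeping: one must reconcile the Tamagawa normalization of measures on $\GL_2(\mathbold{A})$ and $B^\times(\mathbold{A})$ with the classical hyperbolic measures on $\Gamma\backslash\mathbold{H}$ and $\Gamma'\backslash\mathbold{H}$ (whose ratio is controlled by Remark \ref{rmk:volumes-known}), align the local Shimizu constants with the Petersson-norm convention of Definition \ref{defn:compatibly-normalized}, and verify that the archimedean Gaussian really does give back the stated $U_k$. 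These technicalities --- routine but error-prone --- are what Appendix \ref{sec:shim-corr-1} is destined to handle.
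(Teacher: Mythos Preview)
Your proposal is correct and follows essentially the same route as the paper: unadelize Watson's explicit Shimizu correspondence by choosing the Schwartz function $\varphi_p = \vol(R_p^\times)^{-1}\mathbf{1}_{R_p}$ at finite places and $\varphi_\infty = \pi^{-1} X^k e^{-2\pi P}$ at infinity, relate the adelic theta kernel to the classical $\theta_{z_1,z_2}$ via strong approximation (Lemma~\ref{lem:adelize} and Lemma~\ref{lem:theta-vs-theta}), and invoke Watson's Theorem~1 (Theorem~\ref{thm:watson-1}) for the integral identity, with the volume constants $c(\Gamma), c(\Gamma')$ and the compatible-normalization hypothesis conspiring to give $C=1$. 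The $\Gamma$-automorphy is indeed formal from the left-$\SL_2(\mathbold{Q})$-invariance of the adelic theta kernel, exactly as you say.
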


\begin{remark}
  \label{rmk:nonsplit-shimizu-theta-is-cuspidal}
  The quadratic form $(B,\det)$
  represents $0$ (nontrivially) if and only if $B$ is split,
  so that
  $U_k(L;0,y) \neq 0$
  for some lattice $L < B$
  if
  only if $B \cong M_2(\mathbold{Q})$.
\end{remark}

\subsection{Problem: computing integrals of automorphic functions}
\label{sec:comp-integr-autom}
Let $M$ be a positive integer and $\Gamma =
\Gamma_0(M)$.\footnote{
  The discussion that follows
  applies reasonably well
  to any lattice $\Gamma$ with cusps.}
There arose in \sec\ref{sec:overview-1}
the problem of computing the Petersson inner product
of a fixed newform against an explicit theta series
on $\Gamma \backslash \mathbold{H}$.
This problem is a special case of a more general one
to which we now turn.

Suppose that we are given
a $\Gamma$-invariant function
$F : \Gamma \backslash \mathbold{H} \rightarrow \mathbold{C}$.
By ``given,'' we mean that we know
the Fourier expansion
\begin{equation}\label{eq:22}
  F(\tau z)
  = \sum_{n \in \mathbold{Q}}
  a_F(n,y;\tau)
  e(n x)
\end{equation}
of $F$, for each $\tau \in \SL_2(\mathbold{Z})$,
to some large but finite precision.
The main example to keep in mind
is
when
$F(z) = y^k \overline{f_1(z)} f_2(z)$
for suitably regular automorphic functions $f_1$ and $f_2$
of the same weight $k$,
at least one of which is neither holomorphic
nor a Hecke eigenform.
Other approaches
are available
when, say,  both $f_1$ and $f_2$ are holomorphic.

We do not assume any regularity of $F$, but we do assume
that it is absolutely integrable on $\Gamma \backslash
\mathbold{H}$.
We assume also
that one can compute
$a_F(n,y;\tau)$, to a given precision,
in essentially bounded time for large $y$
and in time $(1/y)^{O(1)}$ for small $y$.

Given such an $F$, how can we compute
the integral
\[
\int F := \int_{\Gamma \backslash \mathbold{H}} F(z) \, \frac{d x \, d y}{
  y^2}?
\]
In applications
we might wish to compute many
such integrals (say a million).

\subsection{Some candidate approaches}\label{sec:some-cand-appr}
As motivation,
we describe here
some natural
approaches to the problem stated in \sec\ref{sec:comp-integr-autom}.

\subsubsection*{Candidate approach 1: brute force}
Suppose first that $M = 1$,
and that the Fourier expansion \eqref{eq:22}
converges rapidly on the standard
fundamental domain
\begin{equation}\label{eq:10}
  \mathcal{F}
  :=
  \left\{ z = x + i y \in \mathbold{H}
  : |x| \leq 1/2, |z| \geq 1
\right\}
\end{equation}
for $\SL_2(\mathbold{Z})$.
The latter condition
holds in all applications we have in mind,
so we henceforth refrain from stating it explicitly
in this informal discussion.
Then we can compute $\int F$
by truncating its Fourier expansion
and integrating each term over $\mathcal{F}$.

For general $M$,
we can take as a fundamental domain
for $\Gamma \backslash \mathbold{H}$
the essentially disjoint union of $\tau \mathcal{F}$
over $\tau \in \SL_2(\mathbold{Z}) / \Gamma$.\footnote{In implementations,
  the sum over $\tau \in \SL_2(\mathbold{Z}) / \Gamma$
  is replaced by a sum over
  the cusps of $\Gamma$.}
Then $\int F$ is the sum
over $\tau$ of the integral
of $F(\tau z)$ over $\mathcal{F}$,
which we compute as above.
\subsubsection*{Candidate approach 2: holomorphic projection}
Suppose that $k > 2$, and that $F$ is a product
$F(z) = y^k \overline{f_1(z)} f_2(z)$
where $f_1 \in S_k(\Gamma)$ is a holomorphic
cusp form.  Suppose also
there exists $\delta > 0$
such that that $f_2(\tau z) \ll y^{-\delta}$
for all $\tau \in \SL_2(\mathbold{Z})$, $y \geq 1$.\footnote{This assumption holds in the setting of \sec\ref{sec:overview-1}
provided
that $B \not \cong M_2(\mathbold{Q})$.}
Let $\langle , \rangle$ denote the Petersson ``inner product''
on weight $k$ automorphic functions on $\Gamma$,
given by
$\langle h_1,h_2 \rangle
= \int _{\Gamma \backslash \mathbold{H} }
y^k \overline{h_1(z)} h_2(z) \, \frac{d x \, d y}{y^2}$
whenever the integral converges absolutely.
The pairing $\langle , \rangle$ induces the structure of a finite-dimensional
Hilbert space on $S_k(\Gamma)$,
so there exists a unique form $h_2 \in S_k(\Gamma)$,
called the \emph{holomorphic projection}
of $f_2$, with the property that
$\langle h_1, h_2 \rangle = \langle h_1, f_2 \rangle$
for all $h_1 \in S_k(\Gamma)$.
Suppose that the Fourier expansion
of $f_2$ at $\infty$ reads
$f_2(z)
= \sum a_{f_2}(n,y) e(n x)$,
with the sum taken over $n \in \mathbold{N}$.
Then,
following Gross--Zagier \cite[\S IV.5]{GZ86},
its holomorphic projection $h_2$
has the expansion
$h_2(z) = \sum a_{h_2}(n) e(n z)$,
where
\begin{equation}\label{eq:23}
  a_{h_2}(n)
  =
  \frac{
    \int _{\mathbold{R}_+^\times }
    y^{k/2} a_{f_2}(n,y) \cdot y^{k/2} e^{- 2 \pi n y} \, \frac{d^\times y}{y}
  }{
    \int _{\mathbold{R}_+^\times }
    y^{k/2} e^{- 2 \pi n y} \cdot y^{k/2} e^{- 2 \pi n y} \, \frac{d^\times y}{y}
  }.
\end{equation}
By linear algebra on the finite-dimensional
space $S_k(\Gamma)$, we may write
$h_2 = c f_1 + h_2'$
for some complex number $c$
and some element $h_2$ of $S_k(\Gamma)$ that is
$\langle , \rangle$-orthogonal
to $f_1$.
The quantity $\int F = \langle f_1, f_2 \rangle
= \langle f_1, h_2 \rangle = c \langle f_1, f_1 \rangle$
may now be computed
in a number of ways
(see e.g. Example \ref{example:computing-petersson-norms}).

\subsubsection*{Candidate approach 3: equidistribution of
  thickened horocycles}
Our eventual
method of choice
(Theorem \ref{thm:compute-integral-of-automorphic-form} below)
is natural from several perspectives
in retrospect,
but was originally inspired by the following
technique used by Holowinsky \cite{MR2680498}
in his work on the quantum unique ergodicity conjecture.
Suppose, for simplicity, that $M = 1$.
Let $Y$ be a real parameter
tending to $\infty$.
The thickened horocycle
\[
\mathcal{R}_Y
:= \{x + i y \in \mathbold{H} : |x| \leq 1/2, y \in [Y^{-1}, 2
Y^{-1}]\}
\]
contains $\asymp Y$ copies of the fundamental domain
$\mathcal{F}$
for $\SL_2(\mathbold{Z}) \backslash \mathbold{H}$
(see \eqref{eq:10}),
so it is reasonable to expect that
\[
\int F
\approx 
\frac{1}{Y}
\int_{z \in \mathcal{R}_Y} F(z) \, \frac{d x \, d y}{y^2}
=
\frac{1}{Y}
\int _{y = Y^{-1}}^{2 Y^{-1}}
a_F(0,y;1) \, \frac{d^\times y}{y}.
\]
Intuitively, $\int F$
should be roughly an average of the constant term $a_F(0,y;1)$
taken over $y$ sufficiently close to $0$.
This is made rigorous in the following lemma
implicit in \cite{MR2680498}
(although the formulation here is our own).
\begin{lemma}
  \label{lem:holow-regularize}
  Fix a smooth compactly-supported
  function $h \in C_c^\infty(\mathbold{R}^\times_+)$
  with Mellin transform
  $h^\wedge(s)= \int _{(2)} h(y) y^{-s} \, \frac{d s}{2 \pi i}$
  normalized so that
  $h^\wedge(1) = \int_{\SL_2(\mathbold{Z}) \backslash \mathbold{H}}
  \, \frac{d x \, d y}{y^2}$.
  Define a norm $S$ on the space of functions $F :
  \SL_2(\mathbold{Z}) \backslash \mathbold{H} \rightarrow \mathbold{C}$
  by
  $S(F)
  = \int _{z \in \mathcal{F}}
  y^{1/2} |F(z)| \, \frac{d x \, d y}{y^2}.$
  Let $F : \SL_2(\mathbold{Z}) \backslash \mathbold{H} \rightarrow \mathbold{C}$
  be a function for which
  $S(F) < \infty$,
  and let $a_F(0,y;1)$ be its zeroth Fourier coefficient
  at $\infty$ as in \eqref{eq:22}.
  Then for each $Y \geq 1$, we have
  \begin{equation}\label{eq:24}
    \int_{\SL_2(\mathbold{Z}) \backslash \mathbold{H}} F(z) \, \frac{d x \, d y}{y^2}
    =
    \frac{1}{Y}
    \int_{y \in \mathbold{R}_+^*}
    h(Y y)
    a_F(0,y;1)
    \, \frac{d^\times y}{y}
    + O \left(
      \frac{S(F)}{Y^{1/2}}
    \right),
  \end{equation}
  where the implied constant depends only upon $h$.
\end{lemma}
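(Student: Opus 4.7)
The plan is to realize the left-hand side of \eqref{eq:24} as the Petersson pairing of $F$ against an incomplete Eisenstein series of level one, and then to separate out $\int F$ by shifting a contour past the pole of the genuine real-analytic Eisenstein series at $s=1$.

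Concretely, first I would introduce
\[
E_h^{Y}(z) := \sum_{\gamma \in \Gamma_\infty \backslash \SL_2(\mathbold{Z})} h(Y \Im(\gamma z)),
\]
which is a finite sum for each $z$ because $h$ is compactly supported. Standard Rankin--Selberg unfolding, valid by the hypothesis $S(F) < \infty$, gives
\[
\int_{\SL_2(\mathbold{Z}) \backslash \mathbold{H}} E_h^Y(z) F(z) \, \frac{dx\,dy}{y^2}
= \int_0^\infty h(Yy)\, a_F(0,y;1)\, \frac{d^\times y}{y}.
\]
Thus the claimed identity reduces to showing that $E_h^Y(z)$ is, on average against $F$, close to the constant function $Y$.

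To prove this, I would use Mellin inversion on $h$ (say with the convention $h(y) = \int_{(c)} h^\wedge(s)\, y^s \, ds/(2\pi i)$) together with the classical Eisenstein series $E(z,s) = \sum_{\gamma \in \Gamma_\infty \backslash \SL_2(\mathbold{Z})} \Im(\gamma z)^s$, absolutely convergent for $\Re(s) > 1$, to obtain
\[
E_h^Y(z) = \int_{(c)} h^\wedge(s)\, Y^s\, E(z,s)\, \frac{ds}{2\pi i}, \qquad c > 1.
\]
Now I would shift the contour from $\Re(s) = c$ to $\Re(s) = 1/2$. The only singularity crossed is the simple pole of $E(z,s)$ at $s=1$, whose residue equals $1/\vol(\SL_2(\mathbold{Z}) \backslash \mathbold{H})$. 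The normalization $h^\wedge(1) = \vol(\SL_2(\mathbold{Z}) \backslash \mathbold{H})$ was chosen precisely so that this residue contributes exactly $Y$, yielding
\[
E_h^Y(z) = Y + \int_{(1/2)} h^\wedge(s)\, Y^s\, E(z,s)\, \frac{ds}{2\pi i}.
\]

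Pairing against $F$ and dividing by $Y$, the lemma reduces to the bound
\[
\frac{1}{Y} \int_{\mathcal{F}} |F(z)| \cdot \left| \int_{(1/2)} h^\wedge(s)\, Y^s\, E(z,s)\, \frac{ds}{2\pi i} \right| \frac{dx\,dy}{y^2} \ll \frac{S(F)}{Y^{1/2}}.
\]
Since $|Y^s| = Y^{1/2}$ on the critical line, what I need is the pointwise estimate $|E(z, 1/2 + it)| \ll (1+|t|)^A\, y^{1/2}$ on $\mathcal{F}$, where the polynomial dependence on $t$ is absorbed by the Schwartz decay of $h^\wedge$. Granting this, the inner integral is $O_h(Y^{1/2} y^{1/2})$, and the resulting bound
\[
\ll \frac{Y^{1/2}}{Y} \int_{\mathcal{F}} y^{1/2} |F(z)|\, \frac{dx\,dy}{y^2} = \frac{S(F)}{Y^{1/2}}
\]
is exactly what is claimed. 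The main technical point is the aforementioned bound on $E(z, 1/2+it)$; this is classical and follows from the explicit Fourier expansion of $E(z,s)$ (the constant term $y^s + \phi(s) y^{1-s}$ has modulus $O(y^{1/2})$ on the critical line, while the Bessel-function terms decay exponentially in $y$), combined with standard polynomial-in-$t$ bounds for the scattering matrix coming from convexity estimates for $\zeta$ on the edge of the critical strip.
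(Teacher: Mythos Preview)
Your proposal is correct and follows essentially the same route as the paper's own proof: form the incomplete Eisenstein series, unfold to produce the main term, apply Mellin inversion, shift the contour to $\Re(s)=1/2$ picking up the residue at $s=1$, and use the bound $E(z,1/2+it)\ll (1+|t|)^{O(1)} y^{1/2}$ on $\mathcal{F}$ together with the rapid decay of $h^\wedge$. The only cosmetic difference is that the paper absorbs the factor $Y^{-1}$ into the definition of the incomplete Eisenstein series from the outset.
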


\subsection{Criticism of candidate approaches}\label{sec:crit-cand-appr}
The approaches described in \sec\ref{sec:some-cand-appr}
work,\footnote{We have experimented with
the first two (using SAGE) for $F$ arising as a rather general
product of squarefree level
newforms, their derivatives, and Shimizu theta series;
see \cite{nelson-appendix-C-prasanna-notes}.}
but suffer some drawbacks,
which we now describe.

The brute force approach, which involves integrating
over $\mathcal{F}$, is somewhat slow.
In applications, we are not aware of closed form
expressions
for the integrals
over $\mathcal{F}$ 
that arise, so we must compute each one afresh.
A typical such integral might look like a linear combination
of terms
\begin{equation*}
  \mathop{\int _{y = 0 } ^\infty  \int _{x = -1/2} ^{1/2} }
  _{x ^2 + y ^2 \geq 1}
  y^{k-1} e^{2 \pi i n b} e^{- 2 \pi c y}
  \, d x \, d y
\end{equation*}
over some integers $b$ and nonnegative reals $c$.  
Caching helps, but not enough
to carry out efficiently certain more advanced applications,
such as Heegner point
computations.

Moreover, we have in mind the natural generalization
of the problem under consideration
to the setting of automorphic forms on $\GL_2$ over a number
field,
in which the integral over $\Gamma_0(M) \backslash
\mathbold{H}$
with its explicit fundamental domain
becomes replaced by an integral over
a $(2 r_1 + 3 r_2)$-dimensional quotient
$\Delta \backslash (\mathbold{H}^{r_1} \times
\mathbold{H}_3^{r_2})$, or perhaps several copies
of such quotients;
here $\mathbold{H}_3$
is hyperbolic upper half-space.
An explicit description of a fundamental domain
for $\Delta \backslash (\mathbold{H}^{r_1} \times
\mathbold{H}_3^{r_2})$
seems prohibitively complicated,
even for the simplest nontrivial number fields.
Even granting such a description, the problem of integrating an
oscillating
multivariate function over such a fundamental domain
(a few million times)
still seems computationally infeasible.

The holomorphic projection approach
works
quite well when $k$ is  large enough,
even over number fields.
It is simpler than the other methods
in the presence of nontrivial level
because it requires knowledge of the Fourier
coefficients $a_F(n,y;\tau)$ only when $\tau = 1$
(i.e., at the cusp $\infty$).
When $k$ is
small,
say $k = 6$ or $k = 4$, the
computation of the numerator of \eqref{eq:23}
becomes prohibitively slow.
There is a holomorphic projection
formula for $k = 2$ that we have not stated here
(see \cite[\S IV.6]{GZ86}),
but it involves a delicate limiting procedure
that seems to render it computationally ineffective.
For us, the case $k = 2$ is important
owing to its relevance for computing
modular parametrizations of elliptic curves.

Lemma \ref{lem:holow-regularize}
is convenient for certain theoretical applications,
as in \cite{MR2680498}.
However, it is not well-suited for numerics:
One must take
$Y^{1/2} \gg 10^R$
in Lemma \ref{lem:holow-regularize}
to obtain 
$R$ digits of precision past the decimal point,
but it may require time (not less than) polynomial in $Y$
to compute the integral on the RHS of \eqref{eq:24}.
For applications in which high precision is needed,
an algorithm requiring time exponential in the number
of digits of desired precision is infeasible.


\subsection{Our method of choice}\label{sec:our-method-choice}
We begin by recalling some background on Eisenstein series.
Let $\xi(s) = \Gamma_\mathbold{R}(s) \zeta(s)$
be the completed Riemann zeta function,
where $\Gamma_\mathbold{R}(s) = \pi^{-s/2} \Gamma(s/2)$
and $\zeta(s) = \sum_{n \in \mathbold{N}} n^{-s}$ ($\Re(s) >
1$).
Let
$\Gamma_\infty
= \left\{ \pm
  \left(
    \begin{smallmatrix}
      1&n\\
      &1
    \end{smallmatrix}
  \right)
  : n \in \mathbold{Z}  \right\}$
be the stabilizer
in
$\SL_2(\mathbold{Z})$
of the cusp $\infty$,
let
\[
E_s(z) = \sum_{\gamma \in \Gamma_\infty \backslash
  \SL_2(\mathbold{Z})}
\Im(\gamma z)^s
\quad (\Re(s) > 1)
\]
be the standard
real-analytic Eisenstein series
for $\SL_2(\mathbold{Z})$,
and let
$E_s^*(z) = 2 \xi (2 s) E_s(z)$
be its completion.
The functions $E_s$ and $E_s^*$ descend
to $\SL_2(\mathbold{Z}) \backslash \mathbold{H}$.
We collect some of their standard properties
in the following proposition.
\begin{proposition}[see \cite{MR1942691}]
\label{prop:eis-standard-facts}
  Let $z \in \mathbold{H}$.
  The function $s \mapsto E_s(z)$, defined initially
  for $\Re(s) > 1$
  by an absolutely and locally uniformly convergent series, extends to a meromorphic
  function on the complex plane that is holomorphic
  in the half-plane $\Re(s) \geq 1/2$ away from a simple pole
  at $s=1$.
  The function $s \mapsto E_s^*(z)$
  extends to a meromorphic function on the complex plane,
  holomorphic away from simple poles at
  $s = 1$ and $s = 0$,
  that satisfies the functional equation
  $E^*_s(z) = E^*_{1-s}(z)$.
  One has $\res_{s=1} E_s(z) =
  \left(\int_{\SL_2(\mathbold{Z}) \backslash \mathbold{H}}
  \, \frac{d x \, d y}{y^2} \right)^{-1}$
  and $\res_{s=1} E_s^*(z) = 1$.
  The function $E_s(z)$ has moderate growth
  in both variables in the sense that
  for $\Re(s)$ in a fixed compact subset of $[1/2,\infty)$
  and $z = x + i y$ in the fundamental domain $\mathcal{F}$,
  it satisfies $E_s(z) \ll (1 + |s|)^{O(1)} y^{\Re(s)}$.
  The function $s \mapsto E_s^*(z)$ decays rapidly
  in vertical strips in the sense that
  if $A \in \mathbold{R}$,
  $z \in \mathbold{H}$
  and $\Re(s) \in \mathbold{R}$ are
  taken to lie in fixed compact sets,
  then
  $E_s^*(z) \ll (1 + |s|)^{-A} |s(s-1)|^{-1}$.
\end{proposition}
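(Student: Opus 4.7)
The plan is to derive all stated properties from the Fourier expansion of $E_s(z)$ at the cusp $\infty$. Using the parameterization of $\Gamma_\infty \backslash \SL_2(\mathbold{Z})$ by coprime pairs $(c,d)$ with $c \geq 0$, unfolding, and applying Poisson summation in the variable $d \pmod c$, one obtains the classical identity
\[
E_s^*(z) = \xi(2s) y^s + \xi(2s-1) y^{1-s} + 4 \sqrt{y} \sum_{n \in \mathbold{N}} n^{s-1/2} \sigma_{1-2s}(n) K_{s-1/2}(2\pi n y) \cos(2 \pi n x),
\]
where $\sigma_\nu(n) = \sum_{d \mid n} d^\nu$ and $K_\nu$ is the usual $K$-Bessel function. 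Since $K_\nu(u) \sim (\pi/2u)^{1/2} e^{-u}$ as $u \to \infty$, and its dependence on $\nu$ is controlled by Stirling applied to the integral representation, the Bessel sum converges absolutely and locally uniformly in $(s,z)$ for all $s \in \mathbold{C}$ and $y > 0$; it thus defines an entire function of $s$. Consequently, the entire meromorphic structure of $E_s^*$ in $s$ lives in the constant term.

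From this all analytic claims follow. The factor $\xi(2s)$ contributes potential poles at $s = 0, 1/2$ and $\xi(2s-1)$ at $s = 1/2, 1$; the functional equation $\xi(u) = \xi(1-u)$ shows the pair of would-be poles at $s = 1/2$ cancels, leaving simple poles of $E_s^*$ only at $s = 0$ and $s = 1$. The same functional equation, applied termwise to the Bessel sum via $K_\nu = K_{-\nu}$ and the identity $\sigma_\nu(n) = n^\nu \sigma_{-\nu}(n)$, yields $E_s^*(z) = E_{1-s}^*(z)$. Using $\res_{u=1} \xi(u) = 1$ and the chain rule, one computes $\res_{s=1} E_s^*(z) = 1$, and then dividing by $2 \xi(2s)$ (which is holomorphic and nonzero at $s = 1$) gives $\res_{s=1} E_s(z) = 1/(2 \xi(2)) = 3/\pi$, matching $\vol(\SL_2(\mathbold{Z}) \backslash \mathbold{H})^{-1}$ by the standard area computation. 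Regularity of $E_s$ (as opposed to $E_s^*$) on $\Re(s) \geq 1/2$ away from $s = 1$ follows because $2\xi(2s)$ has its zeros and pole at $s = 1/2$ exactly positioned to cancel the corresponding behavior of $E_s^*$ there.

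The two growth bounds reduce to Stirling. On $\mathcal{F}$ one has $y \geq \sqrt{3}/2$, and for $\Re(s)$ in a compact subset of $[1/2, \infty)$ the Bessel sum is $O(y^{1/2} (1+|s|)^{O(1)} e^{-2\pi y})$, which is absorbed into the bound $(1+|s|)^{O(1)} y^{\Re(s)}$ coming from the two constant-term summands. For rapid decay in vertical strips one observes that $\xi(2s)$ and $\xi(2s-1)$ decay super-polynomially in $|\Im(s)|$ by Stirling (the polynomial bound on $\zeta$ on vertical lines being overwhelmed by the super-exponential decay of $\Gamma(s/2)$), while the Bessel sum similarly decays rapidly; the factor $|s(s-1)|^{-1}$ is inserted precisely to clear the simple poles at $s = 0, 1$. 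The only mildly delicate step is verifying the cancellation of poles at $s = 1/2$ and tracking the constants in the residue at $s = 1$; everything else is a direct reading of the explicit Fourier expansion.
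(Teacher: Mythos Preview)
Your argument via the explicit Fourier expansion is the standard one and is correct; it is precisely the route taken in the reference the paper cites. Note, however, that the paper itself does not supply a proof of this proposition: it is stated as background with the parenthetical ``see \cite{MR1942691}'' and then used as a black box in the derivation of Theorem~\ref{thm:compute-integral-of-automorphic-form}. So there is nothing to compare against beyond observing that your sketch reproduces what one finds in Iwaniec's book.
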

As motivation,
we sketch the proof of
Lemma \ref{lem:holow-regularize}.
Let $h$ and $Y$ be as in its statement,
and define
$E^Y(z) = \sum_{\gamma \in \Gamma_\infty \backslash
  \SL_2(\mathbold{Z})}
Y^{-1} h(Y \cdot \Im(\gamma z))$.
By unfolding,  $\int E^Y F$
gives the main term on the RHS of \eqref{eq:24}.
On the other hand,
Mellin inversion and a contour shift
show
that
\[
E^Y(z) = \int _{(2)} h^\wedge(s) Y^{s-1} E_s(z) \, \frac{d s}{2
  \pi i}
= 1 +
Y^{-1/2} \int _{t \in \mathbold{R}}
h^\wedge(1/2+it)
Y^{it}
E_{1/2+it}(z)
\, \frac{d t}{2 \pi }
\]
Since
repeated partial integration implies
that $h^\wedge(1/2+it) \ll (1+|t|)^{-A}$
for each fixed $A \in \mathbold{R}$,
we deduce from Proposition \ref{prop:eis-standard-facts}
that $E^Y(z) = 1 + O(Y^{-1/2} y^{1/2})$
for all $z \in \mathcal{F}$.
Integrating against $F$
gives \eqref{eq:24}.

We refine this argument by completing
the Eisenstein series and continuing past the line $\Re(s) =
1/2$,
noting that unlike
$E_s$,
which has infinitely many poles in the region $0 < \Re(s) < 1/2$,
the function $E_s^*$ is holomorphic
away from $s = 0$ and $s = 1$.
Fix $\eps > 0$,
and let $H$ be a holomorphic function
on the strip $\{s \in \mathbold{C} : -\eps  < \Re(s) < 1 + \eps 
\}$ satisfying the conditions
$H(0) = 0$, $H(1) = 1$, and
$H(s) \ll (1 + |s|)^{O(1)}$.
For example,
one can take $H(s) = s$.
Then $H(s) E_s^*$
is holomorphic away from its simple pole at $s = 1$
of residue $1$ and decays rapidly
in the strip $-\eps  < \Re(s) < 1 + \eps $,
so for each $\delta \in (0,\eps )$,
we have the fundamental identity
\begin{align}\label{eq:26}
  1 = \res_{s = 1}
  H(s) E_s^*
  &=
  \left(
    \int _{(1 + \delta )} - \int _{(- \delta )}
  \right)
  H(s) E_s^*
  \, \frac{d s}{2 \pi i} \\ \nonumber
  &= 
  \int _{(1 + \delta )}
  \left( H (s) - H (1 - s ) \right)
  E_s^* \, \frac{d s}{2 \pi i}.
\end{align}
In the final step, we used the functional
equation $E_s^* = E_{1-s}^*$.

We would like to integrate \eqref{eq:26}
against $F : \Gamma \backslash \mathbold{H} \rightarrow \mathbold{C}$
and interchange the order of integration
to obtain
\begin{align*}
  \int F
  &=
  \int _{(1 + \delta )} (H(s) - H(1-s)) 2 \xi(2 s)
  \left(
    \int F E_s
  \right)
  \, \frac{d s}{2 \pi i}.
\end{align*}
By taking $\delta$ sufficiently small,
we see that this interchange is justified
provided that $\lvert F(z) \rvert \ll y^{-\alpha}$
for some fixed $\alpha > 0$ and
all $z = x + i y$ with $y \geq 1$.
Unfolding gives
$\int F E_s = \int_{y \in \mathbold{R}^\times_+}
a_0(0,y;1) y^{s-1} \, d^\times y$.

If $\Gamma$ is a finite
index subgroup of $\SL_2(\mathbold{Z})$
and $F$ is on $\Gamma \backslash \mathbold{H}$,
then applying the above argument
to its pushforward
$z \mapsto \sum_{\Gamma \backslash \SL_2(\mathbold{Z})}
F(\tau z)$ on $\SL_2(\mathbold{Z}) \backslash \mathbold{H}$
yields:
\begin{theorem}\label{thm:compute-integral-of-automorphic-form}
  Let $\Gamma$ be
  a finite
  index subgroup of $\SL_2(\mathbold{Z})$.
  Let $F : \Gamma \backslash \mathbold{H} \rightarrow
  \mathbold{C}$
  be a bounded measurable function satisfying
  $F(\tau z) \ll y^{-\alpha}$ for some fixed $\alpha > 0$,
  almost all $z = x + i y$ with $y \geq 1$,
  and all $\tau \in \SL_2(\mathbold{Z})$.
  Let $\eps > 0$, and let $H$ be a holomorphic function
  on $\{s \in \mathbold{C} : -\eps < \Re(s) < 1+\eps\}$
  satisfying $H(0) = 0$, $H(1) = 1$, and $H(s) \ll (1+|s|)^{O(1)}$.
  Then for $\delta \in (0,\min(\alpha,\eps))$, we have
  \[
  \int _{\Gamma \backslash \mathbold{H} }F(z)
  \, \frac{d x \, d y}{y^2}
  = 
  \int _{(1+\delta)}
  (H(s) - H(1-s)) 2 \xi(2 s)
  \sum_{\tau \in \Gamma \backslash \SL_2(\mathbold{Z})}
  a_F(0,\cdot;\tau)^\wedge(1-s)
  \, \frac{d s}{2 \pi i}.
  \]
  Here
  $a_F(0,y;\tau)$ is the constant term of the Fourier expansion
  of $F(\tau z)$ as in \eqref{eq:22},
  while
  \[
  a_F(0,\cdot;\tau)^\wedge(1-s)
  := \int _{y \in \mathbold{R}^\times _+} a_F(0,y;\tau)
  y^{s-1} \, d^\times y.
  \]
\end{theorem}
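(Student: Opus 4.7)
The strategy is to integrate the fundamental identity \eqref{eq:26} against $F(z)\,dx\,dy/y^2$ over $\Gamma \backslash \mathbold{H}$ and then interchange the order of the $s$- and $z$-integrations; the resulting inner integral $\int_{\Gamma \backslash \mathbold{H}} F(z) E_s^*(z)\,dx\,dy/y^2$ will then be computed by unfolding against the Eisenstein series.

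First I would pass to the pushforward $\tilde F(z) := \sum_{\tau \in \Gamma \backslash \SL_2(\mathbold{Z})} F(\tau z)$, a bounded measurable function on $\SL_2(\mathbold{Z}) \backslash \mathbold{H}$ whose integral over that quotient coincides with $\int F$ and whose zeroth Fourier coefficient at infinity is $a_{\tilde F}(0,y) = \sum_\tau a_F(0,y;\tau)$. Since $E_s^*$ is $\SL_2(\mathbold{Z})$-invariant, a fundamental-domain decomposition $\bigsqcup_\tau \tau \mathcal{F}$ for $\Gamma$ gives $\int_{\Gamma \backslash \mathbold{H}} F \cdot E_s^*\,d\mu = \int_{\SL_2(\mathbold{Z})\backslash\mathbold{H}} \tilde F \cdot E_s^*\,d\mu$. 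For $\Re(s) > 1$, unfolding the defining series of $E_s$ collapses the latter to $\int_{\Gamma_\infty \backslash \mathbold{H}} \tilde F(z)\, y^s \,d\mu(z) = \int_0^\infty a_{\tilde F}(0,y) y^{s-1}\,d^\times y = \sum_\tau a_F(0,\cdot;\tau)^\wedge(1-s)$. Multiplying by $2\xi(2s)$ yields precisely the claimed right-hand side; absolute convergence of this unfolded Mellin transform holds in the strip $1 < \Re(s) < 1+\alpha$, using the hypothesis $F(\tau z) \ll y^{-\alpha}$ for $y \geq 1$ (which transfers to $\tilde F$ because the sum over $\tau$ is finite) together with boundedness of $F$ (which controls $y \to 0$).

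The main obstacle is justifying the Fubini interchange on the contour $\Re(s) = 1+\delta$. I would verify absolute convergence of the double integral $\int \int |F(z)| \, |H(s) - H(1-s)| \, |E_s^*(z)| \, \tfrac{|ds|}{2\pi} \tfrac{dx\,dy}{y^2}$. The factor $H(s) - H(1-s)$ grows only polynomially on the contour, while $E_s^*$ decays rapidly in $|\Im(s)|$ on vertical strips for $z$ in compacta, by Proposition \ref{prop:eis-standard-facts}. To handle the cuspidal region, one uses the explicit constant-term expansion $E_s^*(z) = 2\xi(2s)\, y^s + 2\xi(2s-1)\, y^{1-s}$ modulo terms exponentially small in $y$, together with Stirling's formula to see that $\xi(2s)$ and $\xi(2s-1)$ decay exponentially in $|\Im(s)|$ on the line $\Re(s) = 1+\delta$; the inner $s$-integral is thereby bounded by a constant multiple of $y_0^{1+\delta} + y_0^{-\delta}$, where $y_0 = \Im(z_0)$ for $z_0$ the $\SL_2(\mathbold{Z})$-translate of $z$ in $\mathcal{F}$. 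Integrating against $|F(z)|$ over $\Gamma \backslash \mathbold{H}$ and passing to $\mathcal{F}$, the decay $|F(\tau z_0)| \ll y_0^{-\alpha}$ for $y_0 \geq 1$ renders the double integral convergent precisely when $\delta < \alpha$; the hypothesis $\delta < \eps$ simultaneously ensures holomorphy of $H(s) E_s^*$ in the requisite strip, so that the contour shift underlying \eqref{eq:26} is valid. With Fubini justified, the claimed formula follows.
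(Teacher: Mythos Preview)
Your proposal is correct and follows essentially the same approach as the paper: integrate the identity \eqref{eq:26} against $F$, justify the interchange of the $s$- and $z$-integrals using the decay hypothesis $F(\tau z)\ll y^{-\alpha}$ together with the growth bound $E_s(z)\ll (1+|s|)^{O(1)} y^{\Re(s)}$ from Proposition~\ref{prop:eis-standard-facts} and the exponential decay of $\xi(2s)$ on vertical lines, then unfold to obtain the Mellin transform of the constant term; the reduction to $\SL_2(\mathbold{Z})\backslash\mathbold{H}$ via the pushforward $\tilde F$ is exactly what the paper does in the final paragraph. Your treatment of the Fubini step is in fact more explicit than the paper's, which simply asserts that taking $\delta$ sufficiently small justifies the interchange.
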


\begin{example}\label{example:computing-petersson-norms}
  We give an example
  in which Theorem
  \ref{thm:compute-integral-of-automorphic-form} specializes
  to
  the approximate functional equation
  applied to Rankin--Selberg $L$-functions.
  Take $\Gamma = \SL_2(\mathbold{Z})$, $k = 12$, and let
  $\Delta(z) =  q \prod ( 1 - q^n)^{24}
  = \sum a_\Delta(n) q^n \in S_k(\Gamma)$ be as in \sec\ref{sec:summary-results}.
  Define $F : \Gamma \backslash \mathbold{H} \rightarrow
  \mathbold{C}$
  by the formula $F(z) = y^k \lvert \Delta(z)  \rvert ^2$.
  The constant term of $F$ is
  $a_F(0,y;1)
  = y^k \sum
  \lvert a_\Delta(n) \rvert^2 e ^{- 4 \pi n y}$,
  which has the Mellin transform (for $\Re(s) > 1$)
  \[
  a_F(0,\cdot;1)^\wedge(1-s)
  =
  \frac{\Gamma(k-1+s)}{(4 \pi)^{k-1+s}}
  \sum_{n \in \mathbold{N}} \frac{\left\lvert a_\Delta(n)
    \right\rvert^2}{n^{k-1+s}}
  =:
  \frac{\Gamma(k-1+s)}{(4 \pi )^{k-1+s}}
  \frac{L(\Delta \times \Delta,s)}{\zeta(2 s)}.
  \]
  Thus Theorem \ref{thm:compute-integral-of-automorphic-form}
  reads
  \begin{align*}
    \int_{\SL_2(\mathbold{Z}) \backslash \mathbold{H}}
    y^k \lvert \Delta (z) \rvert^2
    \frac{d x \, d y }{y^2}
    &=
    \int _{(1 + \delta )}
    (H(s)-H(1-s))
    2 \xi (2 s )
    a_F(0,\cdot;1)^\wedge(1-s)
    \, \frac{d s}{2 \pi i}
    \\
    &= 2
    \sum _{n \in \mathbold{N} }
    \frac{\left\lvert a _\Delta (n) \right\rvert ^2 }{(4 \pi n)
      ^{k - 1 }}
    \sum_{d \in \mathbold{N}}
    \Phi_{k,H}(4 \pi^2 d^2 n ),
  \end{align*}
  where
  $\Phi_{k,H}(y)
  = \int _{(1 + \delta )}
  (H (s) - H (1 - s )) \Gamma(s) \Gamma(k-1+s) y^{-s}
  \, \frac{d s}{2 \pi i}.$
  For the choice $H(s) = s$, one can verify that
  $\Phi_{k,H}(y)
  =
  y^{(k-1)/2} ( 4 \sqrt{y} K_{k-2}(2 \sqrt{y})
  - 2 K_{k-1}(2 \sqrt{y}))$
  (see Lemma \ref{lem:evaluate-integral-bessel-stuff}).
  The result obtained
  is equivalent to what one would get
  by first expressing the Petersson norm of $\Delta$ in terms
  of the residue of $L(\Delta \times \Delta, s)$
  at $s=1$ via the Rankin--Selberg method, and then computing that
  residue
  via an approximate functional
  equation.
  The ``two proofs'' are essentially
  rearrangements
  of one another.

\end{example}

\begin{remark}\label{rmk:holom-proj-wt-2}
  Theorem \ref{thm:compute-integral-of-automorphic-form} allows
  one to compute effectively the holomorphic projection of
  certain weight $k$ automorphic functions, even in the delicate
  case $k=2$.  However, it requires knowledge of Fourier
  expansions at all cusps.  It would be interesting to have a
  effective method for computing holomorphic projections using
  Fourier expansions only at the cusp $\infty$ when $k = 2$.
  Conceivably one could redo the above argument with
  the non-holomorphic weight $k$ Poincar\'{e} series and its
  recurrence relation under $s \mapsto s+1$ taking the place of the non-holomorphic
  weight $0$ Eisenstein series and its functional equation under $s \mapsto
  1-s$,
  but we have not attempted to do so.
\end{remark}

\begin{remark}
  Theorem \ref{thm:compute-integral-of-automorphic-form} is also
  of theoretical interest, because it gives a way to evaluate
  integrals of automorphic functions $F$ for which one cannot
  satisfactorily control the norm $S(F)$ of Lemma
  \ref{lem:holow-regularize}. We exploit this property
  in a paper under preparation to bound
  periods of restrictions of Hilbert modular forms.
\end{remark}

\appendix

\section{Proof of formulas}
\label{sec:proofs-formulas}
In this section we show
how the identity of Theorem
\ref{thm:explicit-shimizu-unadelized}
and the method for computing Petersson
inner products afforded by Theorem
\ref{thm:compute-integral-of-automorphic-form}
imply the identites stated in \sec\ref{sec:explicit-formula}.

\subsection{Fourier expansions at various
  cusps}\label{sec:fourier-expansions}
Preserve the notation of \secs\ref{sec:overview-1}
and \ref{sec:expl-shim-corr-1}.
Let $F : \mathbold{H} \rightarrow \mathbold{C}$
be the function
$F(z) = y^k \overline{f(z)} \theta_{z_1,z_2}(z)$,
which arose as the integrand of \eqref{eq:27}.
Owing to the identity
$\Im(\tau z) = \Im(z) / |c z + d|^2$
for $\tau = \left(
  \begin{smallmatrix}
    *&*\\
    c&d
  \end{smallmatrix}
\right) \in \SL_2(\mathbold{R})$,
we have
$F(\tau z)
=
y^k \overline{f|_k \tau(z)}
\theta_{z_1,z_2}|_k \tau(z)$
for all $\tau \in \SL_2(\mathbold{R})$.
In particular,
$F(\gamma  z) = F(z)$ for all $\gamma  \in \Gamma$.
For each $\tau \in \SL_2(\mathbold{Z})$,
write
$F(\tau z)
= \sum
a_F(n,y;\tau) e(n x)$
for the Fourier expansion of $F(\tau z)$,
where the sum is over $n$ in $(1/w) \mathbold{Z}$
with $w$ the width of the cusp $\tau \infty$.

Suppose now that $N$, the level of the Eichler order
$R$,
is squarefree.
Our aim in this subsection is to evaluate
the sum, taken over $\tau \in \Gamma \backslash
\SL_2(\mathbold{Z})$, of the ``constant terms''
$a_F(0,y;\tau)$ of $F$.
For a lattice $L < M_2(\mathbold{R})$
and positive reals $t$ and $y$,
set
\[
\widetilde{U}_k(L;t,y)
:= y^k
U_k(L;t,y)
e^{- 2 \pi t y}.
\]
\begin{theorem}
  \label{thm:zeroth-fourier-coefficient-of-integrand}
  With $c_f(d)$ as in \eqref{eq:28},
  we have
  \[
  \sum_{\tau \in \Gamma_0(d_B N) \backslash \SL_2(\mathbold{Z})}
  a_F(0,y;\tau)
  = 
  \sum_{n \in \mathbold{N}}
  a_f(n)
  \sum_{d|d_B N}
  c_f(d)
  \widetilde{U}_k(\sigma_{z_1}^{-1} R^{(d)} \sigma_{z_2};n/d,y).
  \]
\end{theorem}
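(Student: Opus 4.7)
Since $N$ (hence $d_BN$) is squarefree, the cusps of $\Gamma_0(d_BN)$ are in bijection with the divisors $d$ of $d_BN$, and for each such $d$ one may choose a representative $\tau_d \in \SL_2(\mathbold{Z})$ of the corresponding coset in $\Gamma_0(d_BN) \backslash \SL_2(\mathbold{Z})$ which, after clearing denominators, arises from an Atkin--Lehner-type element. The plan is, for each $d$, to compute $F(\tau_d z) = y^k \overline{f|_k \tau_d(z)} \, \theta_{z_1,z_2}|_k \tau_d(z)$, extract the $x$-constant term, and then sum over $d$.

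First, I would analyze the automorphic factor $f|_k \tau_d$. Atkin--Lehner theory for squarefree level (cf.~\cite{MR0268123}) expresses $f|_k \tau_d$ as a scalar multiple of the oldform-free normalized newform $f$ itself, with the scalar determined by the Atkin--Lehner eigenvalues of $f$ at the primes dividing $d$. Using the fact that $a_f(d) = \pm d^{k/2-1}$ for $d \mid d_BN$, this scalar is exactly what one needs to produce the factor $c_f(d) = \mu((d,N))/(d \cdot a_f(d))$ appearing in the claim (the $\mu((d,N))$-factor absorbs the Fricke-vs-$U$-operator distinction between primes $p\mid d_B$ and $p\mid N$, and the $\mu(d)/d$-piece is the standard Fricke normalization).

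Second, and this is where the real work lies, I would compute $\theta_{z_1,z_2}|_k \tau_d$. From the definition of $\theta_{z_1,z_2}$ as a theta series for the positive quadratic form $P$ attached to the lattice $L = \sigma_{z_1}^{-1} R \sigma_{z_2}$ (polarized by $X(\alpha)^k$ and twisted by the determinant), its slash action by an element of $\SL_2(\mathbold{Z})$ is governed by the Weil representation of $\SL_2$ on the Schwartz space of $(B,\det)$. A local-global decomposition shows that at each prime $p\nmid d$ the local lattice $L_p$ is unchanged, while at each $p\mid d$ Poisson summation replaces $L_p$ by its dual; by the characterization of $R^{(d)}$ in \S\ref{sec:part-dual-latt}, the resulting global lattice is precisely a rescaling of $\sigma_{z_1}^{-1} R^{(d)} \sigma_{z_2}$, with a rescaling of the determinant by $1/d$ that accounts for the argument $n/d$ in the claimed formula. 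The overall constant produced by the Weil representation cocycle combines with the Atkin--Lehner scalar from the first step to give the clean coefficient $c_f(d)$.

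Third, I would extract the $x$-constant term of the product. Since $f|_k \tau_d$ is holomorphic and cuspidal, its Fourier expansion at $\infty$ has the form $\sum_{n\ge 1} a_{f|\tau_d}(n) e(nx) e^{-2\pi n y}$, while $\theta_{z_1,z_2}|_k \tau_d$ expands as $\sum_{n} U_k(L_d;n,y) e(nx)$ for the lattice $L_d$ identified above. Matching opposite frequencies in $x$, the constant term collects to $y^k \sum_{n\ge 1} \overline{a_{f|\tau_d}(n)} \, U_k(L_d;n,y) e^{-2\pi n y}$, which is the sum of $\widetilde{U}_k(L_d; n, y)$ terms weighted by the appropriate coefficients of $f$. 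Finally, one sums over $d\mid d_BN$ and reindexes the determinant variable to obtain the claimed identity.

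The main obstacle will be the second step: pinning down exactly how the Weil representation transforms the lattice $R$ into $R^{(d)}$ under the slash action of $\tau_d$, including the precise scalar coming from the local Gauss-sum-like factors, and checking that it combines cleanly with the Atkin--Lehner scalar from the first step to yield $c_f(d)$ and the determinant-normalization $n/d$. This step is really a local calculation at each prime $p\mid d_BN$, where the definitions of $R^{(d)}$ in \S\ref{sec:part-dual-latt} were tailored to make it come out, so the bookkeeping, while intricate, should be purely mechanical once the local Weil-representation formula is in hand.
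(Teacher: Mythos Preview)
Your approach is essentially the same as the paper's: compute $f|_k\tau_d$ via Atkin--Lehner, compute $\theta_{z_1,z_2}|_k\tau_d$ via the Weil representation (obtaining the lattice $R^{(d)}$ and the rescaled determinant $n/d$), match Fourier frequencies to extract the constant term, and sum over $d$.

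There is, however, a genuine bookkeeping gap. You conflate the cusps of $\Gamma_0(d_BN)$ with the cosets in $\Gamma_0(d_BN)\backslash\SL_2(\mathbold{Z})$. The statement to be proved is a sum over \emph{cosets}, of which there are $\prod_{p\mid d_BN}(p+1)=\sum_{d\mid d_BN}d$, not a sum over cusps (of which there are $2^{\omega(d_BN)}$). For each divisor $d$, the cusp $\tau_d\infty$ has width $d$, and a full set of coset representatives is given by $\tau_d\, n(j)$ with $j$ ranging over $\mathbold{Z}/d$ (this is the paper's Lemma~\ref{lem:coset-repr-1}). All $d$ of these cosets produce the same constant term $a_F(0,y;\tau_d\,n(j))=a_F(0,y;\tau_d)$, so the sum over cosets picks up an extra factor of $d$ for each divisor.

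This matters because your claim that the Atkin--Lehner scalar and the Weil-representation scalar combine to give exactly $c_f(d)$ is off by precisely this factor. The Atkin--Lehner computation yields $\mu(d)/(d\cdot a_f(d))$ and the theta transformation yields $\mu((d,d_B))/d$; their product is $c_f(d)/d$, not $c_f(d)$. The missing $d$ is supplied by the width multiplicity you omitted. Once you insert the sum over $j\in\mathbold{Z}/d$, everything balances and your argument goes through.
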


The proof is a calculation that occupies
the remainder of this subsection.

\subsubsection*{Coset representatives}
\label{sec:coset-repr}
For each positive integer $M$,
the natural map $\SL_2(\mathbold{Z}) \rightarrow
\SL_2(\mathbold{Z}/M)$
is surjective.
By the Chinese remainder theorem,
it follows that
for each divisor $d$ of $d_B N$,
there exists an element $\tau_d \in \SL_2(\mathbold{Z})$
such that
\[
\tau_d \equiv \begin{bmatrix}
  0 & 1 \\
  -1 & 0
\end{bmatrix}
\mod d,
\quad 
\tau_d \equiv \begin{bmatrix}
  1 & 0 \\
  0 & 1
\end{bmatrix}
\mod \frac{d_B N}{d}.
\]
The difference between two such elements
$\tau_d$
belongs to the normal subgroup $\Gamma(d_B N)
= \{\gamma \in \SL_2(\mathbold{Z}):
\gamma \equiv 1 \mod d_B N \}$ of $\Gamma$.
The width of the cusp $\tau_d \infty$
is $d$.
For each real $x$
and nonzero real $y$,
set
\[
n(x) = \begin{bmatrix}
  1 & x \\
  0 & 1
\end{bmatrix}
\in \SL_2(\mathbold{R}),
\quad
a(y) = \begin{bmatrix}
  y^{1/2} & 0 \\
  0 & y^{-1/2}
\end{bmatrix}
\in \SL_2(\mathbold{R}).
\]
Recall that
$\Gamma = \Gamma_0(d_B N)$
with $d_B N$ squarefree.
The following lemma is well known.
\begin{lemma}
  \label{lem:coset-repr-1}
  Let $d$ traverse the positive divisors
  of $d_B N$, and let  $j$
  traverse
  a set of representatives
  for $\mathbold{Z} / d$.
  Then the matrices
  $\tau_d n(j)$
  traverse a set of
  representatives for
  $\Gamma  \backslash
  \SL_2(\mathbold{Z})$.
\end{lemma}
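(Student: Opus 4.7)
Set $M := d_B N$ for brevity. My plan is to reduce the whole problem modulo $M$, exploit the assumption that $M$ is squarefree via the Chinese remainder theorem, and work one prime at a time. Since the reduction map $\SL_2(\mathbold{Z}) \to \SL_2(\mathbold{Z}/M)$ is surjective, and $\Gamma = \Gamma_0(M)$ is by definition the preimage of the upper triangular subgroup $B(\mathbold{Z}/M) \subset \SL_2(\mathbold{Z}/M)$, there is a canonical bijection
\[
\Gamma \backslash \SL_2(\mathbold{Z}) \;\cong\; B(\mathbold{Z}/M) \backslash \SL_2(\mathbold{Z}/M) \;\cong\; \prod_{p \mid M} B(\mathbold{F}_p) \backslash \SL_2(\mathbold{F}_p),
\]
the last identification using CRT and the squarefreeness of $M$.

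At each prime $p$, the Bruhat decomposition gives $\SL_2(\mathbold{F}_p) = B(\mathbold{F}_p) \sqcup B(\mathbold{F}_p) \, S \, N(\mathbold{F}_p)$, where $S = \left(\begin{smallmatrix} 0 & 1 \\ -1 & 0 \end{smallmatrix}\right)$ and $N$ is the unipotent radical. A concrete set of coset representatives for $B(\mathbold{F}_p) \backslash \SL_2(\mathbold{F}_p)$ is therefore $\{I\} \cup \{ S \, n(j) : j \in \mathbold{F}_p\}$, of cardinality $p+1$. Under the product decomposition above, for each divisor $d \mid M$ and each $j \in \mathbold{Z}/d$, I associate the element whose $p$-component is $S n(j \bmod p)$ at primes $p \mid d$ and the trivial coset of $I$ at primes $p \mid M/d$. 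By the defining congruences of $\tau_d$, the matrix $\tau_d n(j)$ reduces to exactly this element modulo $M$: modulo $p \mid d$ it is $Sn(j)$, and modulo $p \mid M/d$ it is $n(j)$, which lies in the same $B$-coset as $I$.

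It remains to confirm that the proposed list is complete and irredundant. Irredundancy is checked prime-by-prime: if $d \ne d'$, pick a prime $p$ dividing exactly one of them, so that one representative reduces to a $B$-coset and the other to a non-$B$-coset at $p$; if $d = d'$ but $j \not\equiv j' \pmod d$, pick $p \mid d$ with $j \not\equiv j' \pmod p$ and observe that $Sn(j)$ and $Sn(j')$ lie in distinct $B(\mathbold{F}_p)$-cosets. The total count is
\[
\sum_{d \mid M} d \;=\; \prod_{p \mid M}(1+p) \;=\; [\SL_2(\mathbold{Z}) : \Gamma_0(M)],
\]
which matches the index, so the representatives exhaust the quotient. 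There is no serious obstacle; the only care needed is to keep the left/right conventions straight and to verify that the CRT-prescribed local data indeed match the global matrices $\tau_d n(j)$, which is immediate from the congruences defining $\tau_d$.
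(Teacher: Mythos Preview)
Your argument is correct. The paper does not supply a proof of this lemma at all---it simply declares it ``well known'' and moves on---so there is nothing to compare against; your reduction modulo $M$ combined with the Bruhat decomposition at each prime is a clean and standard way to establish it.
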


\subsubsection*{Fourier expansion of $f$}
Let $d$ be a divisor of $d_B N$.
The newform $f \in \mathcal{F}_k(d_B N)$
has squarefree level.
Let $a_f(n)$ be the $n$th Fourier coefficient of $f$, as in \eqref{eq:7}.
By a well-known result of Atkin--Lehner,
it transforms under $\tau_d a(d)$
via
$f|_k \tau_d a(d) (z)
=
\frac{\mu(d) d^{k/2-1}}{a_f(d)}
f(z).$
Since $f |_k a(d)^{-1}(z)
= d^{-k/2} f(z/d)$,
it follows that
\begin{equation}\label{eq:2}
  f|_k \tau_d (z)
  =
  \frac{\mu(d)}{d \cdot a_f(d)}
  f(z/d)
  = 
  \frac{\mu(d)}{d \cdot a_f(d)}
  \sum_{n \in \mathbold{N}}
  a_f(n) e((n/d) x) e^{- 2 \pi (n/d) y}.
\end{equation}

\subsubsection*{Fourier expansion of $\theta_{z_1,z_2}$}
Let $d$ be a divisor of $d_B N$.
One can compute the Fourier expansion
of $\theta_{z_1,z_2} |_k \tau_d$
via the adelic Weil representation
(see Remark \ref{rmk:fourier-exp-other-cusps}).
The result obtained is that
\begin{equation}\label{eq:1}
\theta_{z_1,z_2}|_k \tau_d(z)
=
\frac{\mu((d,d_B))}{d}
\sum_{n \in \mathbold{Z}}
U_k(\sigma_{z_1}^{-1} R^{(d)} \sigma_{z_2};
n/d,y) e((n/d) x).
\end{equation}

\begin{proof}[Proof of
  Theorem \ref{thm:zeroth-fourier-coefficient-of-integrand}]
Let $d$ be a divisor of $d_B N$.
By (\ref{eq:2}), (\ref{eq:1}),
and the definition
\eqref{eq:28}
of $c_f(d)$,
the constant
term
of $F(\tau_d z)$
is
\begin{align*}
  a_F(0,y;\tau_d)
  &=
  y^k
  \frac{\mu(d)}{d \cdot a_f(d)}
  \cdot \frac{\mu((d,d_B))}{d}
  \sum_{n \in \mathbold{N}}
  a_f(n)
  U_k(\sigma_{z_1}^{-1} R^{(d)} \sigma_{z_2};n/d,y)
  e^{- 2 \pi (n/d) y}
  \\
  &= 
  \frac{c_f(d)}{d}
  \sum_{n \in \mathbold{N}}
  a_f(n)
  \widetilde{U}_k(\sigma_{z_1}^{-1} R^{(d)} \sigma_{z_2};n/d,y).
\end{align*}
A direct calculation shows that
$a_F(0,y;\tau_d n(j)) =
a_F(0,y;\tau_d)$ for all integers $j$,
so Lemma \ref{lem:coset-repr-1} gives
\begin{align*}
  \sum_{\tau \in \Gamma_0(d_B N) \backslash \SL_2(\mathbold{Z})}
  a_F(0,y;\tau)
  &= 
  \sum_{d | d_B N}
  \sum_{j \in \mathbold{Z}/d}
  a_F(0,y;\tau_d n(j))
  \\
  &=
  \sum_{n \in \mathbold{N}}
  a_f(n)
  \sum_{d|d_B N}
  c_f(d)
  \widetilde{U}_k(\sigma_{z_1}^{-1} R^{(d)} \sigma_{z_2};n/d,y).
\end{align*}
\end{proof}


\subsection{Proofs of formulas stated in \sec\ref{sec:explicit-formula}}\label{sec:comp-shim-lift}
Let notation be as in \sec\ref{sec:overview-1},
and set $F(z) = y^k \overline{f(z)} \theta_{z_1,z_2}(z)$.
Theorem \ref{thm:explicit-shimizu-unadelized} reads
\begin{equation}\label{eq:31}
  (y_1 y_2)^{k/2}
  \overline{f_B(z_1)} f_B(z_2)
  =
  \int _{\Gamma \backslash \mathbold{H} }
  F(z)
  \, \frac{d x \, d y}{y^2}.
\end{equation}
We apply Theorem \ref{thm:compute-integral-of-automorphic-form}
with
$\eps = 100$, $H(s) = s$, and $\delta = 10$, giving
\begin{equation}\label{eq:30}
  \int _{\Gamma \backslash \mathbold{H} }
  F(z)
  \, \frac{d x \, d y}{y^2}
  = \int _{(10)} (2 s - 1) 2 \xi(2 s)
  \sum_{\tau \in \Gamma \backslash \SL_2(\mathbold{Z})}
  a_F(0,\cdot;\tau)^\wedge(1-s) \, \frac{d s}{2 \pi i}.
\end{equation}
Suppose now that $N$ is squarefree.
By
Theorem \ref{thm:zeroth-fourier-coefficient-of-integrand},
we then have
\begin{equation}\label{eq:29}
  \sum_{\tau \in \Gamma \backslash \SL_2(\mathbold{Z})}
  a_F(0,\cdot;\tau)^\wedge(1-s)
  =
  \sum_{n \in \mathbold{N}}
  a_f(n)
  \sum_{d|d_B N}
  c_f(d)
  \widetilde{U}_k(\sigma_{z_1}^{-1} R^{(d)}
  \sigma_{z_2};n/d,\cdot)^\wedge(1-s).
\end{equation}

\begin{lemma}
  \label{lem:integral-Uk-stuff}
  For each lattice $L < M_2(\mathbold{R})$
  and positive real $t$,
  \[
  \int _{(10)} (2 s - 1) 2 \xi (2 s)
  \widetilde{U} _k (L;t,\cdot)^\wedge(1-s)
  \, \frac{d s}{2 \pi i}
  = V_k(L,t).
  \]
\end{lemma}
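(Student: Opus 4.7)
The plan is to unfold $\widetilde{U}_k(L;t,y)$ termwise, compute each summand's Mellin transform as a standard $\Gamma$-integral, substitute $\xi(2s) = \pi^{-s}\Gamma(s)\zeta(2s)$, and recognize the resulting Mellin--Barnes contour integral as exactly the Bessel sum defining $W_k$.

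First, I would exploit the identity $P(\alpha) + \det(\alpha) = 2|X(\alpha)|^2$ (immediate from $P = |X|^2 + |Y|^2$ and $\det = |X|^2 - |Y|^2$). On the locus $\det(\alpha) = t$ this collapses $e^{-2\pi y P(\alpha)} e^{-2\pi t y}$ to $e^{-4\pi y |X(\alpha)|^2}$, so
\[
\widetilde{U}_k(L;t,y) = \tfrac{1}{2} y^{k+1} \sum_{\substack{\alpha \in L \\ \det(\alpha) = t}} X(\alpha)^k e^{-4\pi y |X(\alpha)|^2}.
\]
A direct $\Gamma$-integral evaluates the Mellin transform as
\[
\widetilde{U}_k(L;t,\cdot)^\wedge(1-s) = \tfrac{1}{2}(4\pi)^{-(k+s)}\Gamma(k+s) \sum_{\alpha} e^{i k \arg X(\alpha)} |X(\alpha)|^{-k-2s}.
\]

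Next, inserting $\xi(2s) = \pi^{-s}\Gamma(s)\zeta(2s) = \pi^{-s}\Gamma(s)\sum_{n \ge 1} n^{-2s}$ and swapping the order of the two sums with the contour integral --- routine on the line $\Re s = 10$, using Stirling's bounds on $\Gamma(s)\Gamma(s+k)$ together with the estimate from Remark \ref{rmk:conv-rapid-sum-alpha} on the number of $\alpha \in L$ of bounded $|X(\alpha)|$ --- reduces the lemma to the Mellin--Barnes evaluation
\[
\int_{(10)} (2s-1) \Gamma(s)\Gamma(s+k) y^{-s} \frac{ds}{2\pi i} = 4 y^{(k+1)/2} K_{k-1}(2\sqrt{y}) - 2 y^{k/2} K_k(2\sqrt{y}),
\]
specialized to $y = (2\pi n |X(\alpha)|)^2$.

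I would establish this Bessel identity from the classical formula $\int_{(c)} \Gamma(s)\Gamma(s+k) y^{-s} \frac{ds}{2\pi i} = 2 y^{k/2} K_k(2\sqrt{y})$ (verified by Mellin-transforming the RHS against the standard $\int_0^\infty K_k(u) u^{s-1}\, du = 2^{s-2}\Gamma(\tfrac{s+k}{2})\Gamma(\tfrac{s-k}{2})$) together with the observation that the prefactor $(2s-1)$ on the Mellin side corresponds to the operator $-2y \frac{d}{dy} - 1$ on the $y$ side; the derivative is computed via $\frac{d}{dx}(x^\nu K_\nu(x)) = -x^\nu K_{\nu-1}(x)$ with $\nu = k$ and the chain rule in $t = 2\sqrt{y}$. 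Finally, substituting $y = (2\pi n |X(\alpha)|)^2$ and collecting the constants --- in particular the crucial collapse $(4\pi)^{-k}(2\pi)^k = 2^{-k}$ --- the inner sum over $n$ becomes exactly $2^{1-k}\sum_{n \ge 1} n^k\bigl( n(4\pi|X(\alpha)|) K_{k-1}(n \cdot 4\pi|X(\alpha)|) - K_k(n \cdot 4\pi|X(\alpha)|)\bigr) = W_k(4\pi|X(\alpha)|)$, and summing over $\alpha$ gives $V_k(L,t)$ by its definition \eqref{eq:17}. The only real obstacle is the Fubini step, but once the rapid decay of $(2s-1)\xi(2s)\Gamma(s+k)$ on vertical lines is combined with the lattice-point count, everything that remains is bookkeeping of powers of $2$ and $\pi$.
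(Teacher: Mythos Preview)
Your proposal is correct and follows essentially the same route as the paper: simplify $\widetilde{U}_k$ via $P+\det=2|X|^2$, compute the Mellin transform termwise, expand $\xi(2s)=\pi^{-s}\Gamma(s)\zeta(2s)$ as a Dirichlet series, swap sums and integral, and reduce to a Mellin--Barnes identity for Bessel functions. The one cosmetic difference is that the paper proves the Bessel identity (its Lemma~\ref{lem:evaluate-integral-bessel-stuff}) by splitting $(s-\tfrac12)=s-\tfrac12$ and using $s\Gamma(s)=\Gamma(s+1)$ to shift one piece, whereas you realize $(2s-1)$ as the Mellin conjugate of $-2y\,d/dy-1$ and apply $\tfrac{d}{dx}(x^\nu K_\nu)=-x^\nu K_{\nu-1}$; these are equivalent manipulations and lead to the same formula.
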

\begin{proof}[Proof of Theorem \ref{thm:rough-main-2},
  assuming Lemma \ref{lem:integral-Uk-stuff}]
  Combine
  \eqref{eq:31}, \eqref{eq:30}, \eqref{eq:29} and Lemma
  \ref{lem:integral-Uk-stuff}.
\end{proof}
\begin{proof}[Proof of Theorem \ref{thm:ext-maass-deriv}]
  This is obtained
  from Theorem \ref{thm:rough-main-2}
  by differentiating.
  We omit the details, but remark that the resulting
  formula has convincingly passed all numerical tests.
\end{proof}
To prove Lemma \ref{lem:integral-Uk-stuff}, we need
the following technical lemma.
\begin{lemma}
  \label{lem:evaluate-integral-bessel-stuff}
  For each positive real $x$ and $\nu \in \mathbold{C}$
  with $\Re(\nu) \geq 0$,
  \begin{equation*}
    \int _{(10)}
    (s - \tfrac{1}{2})
    \frac{
      \Gamma(s) \Gamma(s+\nu)
    }
    {
      (x/2)^{2s+\nu }
    }
    \, \frac{d s }{2 \pi i}
    = x K_{\nu -1}(x) - K_\nu (x).
  \end{equation*}
\end{lemma}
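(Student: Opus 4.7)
The plan is to recognize the integrand as a Mellin--Barnes representation of $K$-Bessel functions and extract the claimed formula via the recursion $s\Gamma(s) = \Gamma(s+1)$.

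First, I would recall (or quickly rederive by inverting the standard Mellin transform $\int_0^\infty K_\nu(x) x^{\mu-1}\,dx = 2^{\mu-2}\Gamma(\tfrac{\mu+\nu}{2})\Gamma(\tfrac{\mu-\nu}{2})$ via the substitution $\mu = 2s+\nu$) the classical identity
\[
2K_\nu(x) = \frac{1}{2\pi i}\int_{(c)}\Gamma(s)\Gamma(s+\nu)\left(\frac{x}{2}\right)^{-(2s+\nu)}\,ds,
\]
valid for any $c > \max(0,-\Re\nu)$. In our setting, $\Re\nu \geq 0$ and the contour $\Re(s) = 10$ lies well to the right of all poles of $\Gamma(s)\Gamma(s+\nu)$, so this formula applies directly.

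Next I would split the linear factor using $s\Gamma(s) = \Gamma(s+1)$:
\[
(s-\tfrac{1}{2})\Gamma(s)\Gamma(s+\nu) = \Gamma(s+1)\Gamma(s+\nu) - \tfrac{1}{2}\Gamma(s)\Gamma(s+\nu).
\]
The second summand contributes, by the Mellin--Barnes formula above, exactly $-\tfrac{1}{2}\cdot 2K_\nu(x) = -K_\nu(x)$.

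For the first summand, I would substitute $s \mapsto s-1$, shifting the contour to $\Re(s) = 11$ (no poles are crossed in the strip $10 \leq \Re(s) \leq 11$). The integrand becomes $\Gamma(s)\Gamma(s+(\nu-1))(x/2)^{-(2s+(\nu-1))+1}$, i.e.\ a factor of $(x/2)$ times the Mellin--Barnes kernel for $2K_{\nu-1}(x)$; since $11 > \max(0, 1-\Re\nu)$ the formula applies with $\nu$ replaced by $\nu-1$, and this piece contributes $(x/2)\cdot 2K_{\nu-1}(x) = x K_{\nu-1}(x)$. Summing the two contributions yields $xK_{\nu-1}(x)-K_\nu(x)$, as claimed.

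The only conceptual point that needs care — and the ``main obstacle'' to the extent there is one — is verifying that no pole of $\Gamma(s)\Gamma(s+\nu)$ or $\Gamma(s+1)\Gamma(s+\nu)$ is crossed under the contour shift and that both Mellin--Barnes applications are in their domain of absolute convergence when $\Re\nu \geq 0$; both are immediate from the location of the poles on $\Re(s) \leq 0$ together with the fixed choice of far-right contour. The remainder is bookkeeping with the change of variables.
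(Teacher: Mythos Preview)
Your proposal is correct and follows essentially the same route as the paper: both invoke the Mellin--Barnes representation $2K_\nu(x)=\int_{(c)}\Gamma(s)\Gamma(s+\nu)(x/2)^{-(2s+\nu)}\,\tfrac{ds}{2\pi i}$ and the functional equation $s\Gamma(s)=\Gamma(s+1)$ to identify the two pieces of the integrand with $-K_\nu(x)$ and $xK_{\nu-1}(x)$. The only cosmetic difference is directional---the paper builds the integral from the two Bessel terms and subtracts, whereas you split the integral and recognize each summand---but the content is identical.
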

\begin{proof}
  Mellin inversion
  and the integral formula
  $\int _{0}^\infty 
  x^s K_\nu(2 x) \, d^\times x
  = 
  \frac{1}{4}
  \Gamma \left( \frac{s + \nu }{2} \right)
  \Gamma \left( \frac{s -  \nu }{2} \right)$
  valid for $\Re(s \pm \nu)  > 0$
  (see \cite[6.561.16]{GR})
  show that for $\Re(\nu) > -5$,
  \begin{align*}
    K_\nu(x)
    &= \frac{1}{4}
    \int _{(\Re(\nu) + 5)}
    \frac{
      \Gamma \left( \frac{s + \nu }{2} \right)
      \Gamma \left( \frac{s -  \nu }{2} \right)
    }{
      (x/2)^s
    }
    \, \frac{d s}{2 \pi i}
    = \frac{1}{2}
    \int _{(10)}
    \frac{
      \Gamma \left( s\right)
      \Gamma \left( s + \nu \right)
    }{
      (x/2)^{2s+\nu}
    }
    \, \frac{d s}{2 \pi i}.
  \end{align*}
  For $\Re(\nu) > -4$,
  the identity $s \Gamma(s) = \Gamma(s+1)$
  gives
  \[
  x K_{\nu-1}(x)
  =
  \int _{(9)}
  \frac{\Gamma(s) \Gamma(s+\nu-1)}{
    (x/2)^{2s + \nu-2}}
  \, \frac{d s}{2 \pi i}
  =
  \int _{(10)}
  s
  \frac{\Gamma(s) \Gamma(s+\nu)}{
    (x/2)^{2s + \nu}}
  \, \frac{d s}{2 \pi i}.
  \]
  Subtracting, we are done.
\end{proof}
\begin{proof}[Proof of Lemma \ref{lem:integral-Uk-stuff}]
  Since $P(\alpha) + \det(\alpha) = 2 |X(\alpha)|^2$,
  we see that
  \begin{align*}
    \widetilde{U}_k(L;t,y)
    &= \frac{y ^{k + 1}}{2} \sum _{\substack{
        \alpha \in L  \\
        \det (\alpha) = t
      }
    }
    X (\alpha) ^k e ^{- 2 \pi (P (\alpha) + t) y}
    = \frac{y ^{k + 1}}{2} \sum _{\substack{
        \alpha \in L  \\
        \det (\alpha) = t
      }
    }
    X (\alpha) ^k e ^{- 4 \pi |X(\alpha)|^2 y}.
  \end{align*}
  Thus for each $s \in \mathbold{C}$ with $\Re(s) = 10$,
  \begin{align}\label{eq:32}
    \nonumber
    \widetilde{U} _k (L;t,\cdot)^\wedge(1-s)
    &=
    \nonumber
    \int _{y \in \mathbold{R} _+ ^\times }
    \widetilde{U} _k (L;t,y)
    y ^{s - 1 }
    \, d^\times y
    \\
    \nonumber
    &=
    \frac{1}{2}
    \sum _{
      \substack{
        \alpha \in L \\
        \det(\alpha) = t
      }
    }
    X(\alpha)^k
    \frac{\Gamma(s + k)}{(4 \pi |X(\alpha)|^2)^{s+k}} \\
    &=
    \sum _{
      \substack{
        \alpha \in L \\
        \det(\alpha) = t
      }
    }
    e^{i k \arg X(\alpha)}
    Z_k(4 \pi \lvert X(\alpha) \rvert; s),
  \end{align}
  where
  $Z_k(x;s)
  :=
  2^{-k-1}
  \pi^s
  \Gamma(s + k)
  (x/2)^{-2 s-k}$;
  here the interchange of summation and integration
  is justified by absolute convergence.
  Since
  $\xi(2 s) \pi^s = \Gamma(s) \sum_{m \in \mathbold{N}} m^{- 2
    s}$,
  it follows from
  Lemma \ref{lem:evaluate-integral-bessel-stuff}
  that
  \begin{align}\label{eq:35}
    \int _{(10)}
    (2 s - 1) 2 \xi(2 s)
    Z_k(x;s)
    \, \frac{d s}{2 \pi i}
    &=
    2^{1-k}
    \sum_{m \in \mathbold{N}}
      m^k
    \int _{(10)}
    (s - \tfrac{1}{2})
    \frac{
      \Gamma(s) \Gamma(s+k)
    }
    {
      (m x/2)^{2s+k}
    }
    \, \frac{d s }{2 \pi i} \\
    \nonumber
    &=
    2^{1-k}
    \sum_{m \in \mathbold{N}}
    m^k
    ( m xK_{k-1}(m x) - K_k(m x)) \\ \nonumber
    &=
    W_k(x).
  \end{align}
  Combining \eqref{eq:32} and \eqref{eq:35} (with $x = 4 \pi |X(\alpha)|$),
  we conclude that
  \[
  \int _{(10)} (2 s - 1) 2 \xi (2 s)
  \widetilde{U} _k (L;t,\cdot)^\wedge(1-s)
  \, \frac{d s}{2 \pi i}
  =
  \sum _{
    \substack{
      \alpha \in L \\
      \det(\alpha) = t
    }
  }
  e^{i k \arg X(\alpha)}
  W_k(4 \pi |X (\alpha)|),
  \]
  which equals
  $V_k(L,t)$ by definition.
\end{proof}

\section{Explicit Shimizu  correspondence}
\label{sec:shim-corr-1}
In this appendix, we
explain how Theorem
\ref{thm:explicit-shimizu-unadelized}
follows from
the Shimizu correspondence
as pinned down by Watson \cite[Thm 1]{watson-2008}.

\subsection{Notation, measures}\label{sec:notation-measures}
Let $B$ be an
indefinite rational quaternion algebra.
Recall the
notation from \secs\ref{sec:quaternion-algebras2}
and \ref{sec:newf-shim-curv}.
The discussion that follows applies to non-split $B$ as well as to
$B = M_2(\mathbold{Q})$.
We regard $B$, $B^\times$,
$P B^\times$,
$B^1 := \{b \in B^\times : \det(b) = 1\}$,
and $Z(B^\times) = $ center of $B^\times$
as algebraic groups over $\mathbold{Q}$.

Let $\mathbold{A} = \prod ' \mathbold{Q}_v
= \mathbold{R} \times \hat{\mathbold{Q}}$
($\hat{\mathbold{Q}} = \prod_p' \mathbold{Q}_p$)
be the adele ring of $\mathbold{Q}$.
For $G$ an algebraic group over $\mathbold{Q}$,
write $G_v = G(\mathbold{Q}_v)$.
We identify
$G(\mathbold{A})= G_\infty \times
G(\hat{\mathbold{Q}})$
without mention.

Let $G_{\infty+}$
be the topologically connected component
of $G_\infty$,
which for $G = B^\times$ or $PB^\times$ is
the subgroup of positive determinant elements.
We fix
compatible identifications
$B_\infty = M_2(\mathbold{R})$
and $B_\infty^\times  = \GL_2(\mathbold{R})$,
and let $K_\infty = \SO(2) \subset B_{\infty+}^\times$
be the maximal compact connected subgroup
of
$B^\times_{\infty+}$
stabilizing $i \in \mathbold{H}$.

Let $X = $ either $\mathbold{Q}_v$ or $B_v$.
Define
$\langle , \rangle : X \times X \rightarrow
\mathbold{Q}_v$
by $\mathbold{Q}_v \times \mathbold{Q}_v \ni (x,y)
\mapsto x y$
and
$B_v \times B_v \ni (\alpha,\beta) \mapsto
\tr(\alpha \beta^{\iota})
= \alpha \beta^{\iota} + \beta \alpha ^{\iota}$.
Let
$\mathcal{S}(X)$
denote the space of Schwarz-Bruhat
functions
on $X$.
Let $\mathbf{e} = \prod \mathbf{e} _v  \in
\Hom(\mathbold{A}/\mathbold{Q},S^1)$
be the standard additive character, characterized
by requiring
$\mathbf{e}_\infty(x) = e(x) := e^{2 \pi i x}$.
Let
$d x$ be the Haar measure on $X$
for which the Fourier transform
$\mathcal{F} : \mathcal{S}(X) \rightarrow \mathcal{S}(X)$
given by
$\mathcal{F} \varphi(y) :=
\int _{B_v} \varphi(x) e_v (\langle x, y
\rangle) \, d x$
satisfies $\mathcal{F} \mathcal{F} \varphi(x) = \varphi(-x)$.
For $x \in X$, let $|x|$ be its modulus,
so that $[y \mapsto x y]^* d y = |x| \, d y$.
Let $d^\times x$ be the measure $\zeta_v(1) |x|^{-1} \, d x$
on $X^\times$,
where $\zeta_p(s) := (1-p^{-s})^{-1}$ and $\zeta_\infty(s)
:= \pi^{-s/2} \Gamma(s/2)$.

The measures so-defined on $\mathbold{Q}_v^\times$
and $B_v^\times$
induce measures $d^\times \alpha$
on $PB_v^\times$ and $d^{(1)} \alpha$ on $B_v^1$
via the short exact sequences
$1 \rightarrow B_v^1 \rightarrow B_v^\times \xrightarrow{\nu}
\mathbold{Q}_v^\times \rightarrow 1$
and 
$1 \rightarrow \mathbold{Q}_v^\times \rightarrow B_v^\times \rightarrow
PB_v^\times \rightarrow 1$.
This normalization is consistent with
\cite[2.1.2]{watson-2008}.
For $p \nmid d_B$ (resp. $p \mid d_B$),
the measure $d^{(1)} \alpha$ assigns volume
$\zeta_p(2)^{-1}$ (resp. $(p-1)^{-1} \zeta_p(2)^{-1}$)
to the group of norm one units
in a maximal order in $B_p$.
Let $\sigma_z$ ($z \in \mathbold{H}$)
be as in \sec\ref{sec:main-formula}.
Then on $B_\infty^1$, the measure
$d^{(1)} \alpha$ is given by
\[
d^{(1)} \alpha =
\frac{1}{2}
d x \, \frac{d ^\times y}{|y|} \, d \theta
\quad \text{ if }
\alpha =
\sigma_{x+iy}
 \begin{bmatrix}
  \cos(\theta) & \sin(\theta) \\
  -\sin(\theta)& \cos(\theta)
\end{bmatrix}.
\]

The product
$d^{(1)} \alpha :=  \prod_v d^{(1)} \alpha_v$
of measures on $B^1_v$
converges
to a measure on $B^{1}(\mathbold{A})$,
inducing a quotient measure on
$[B^1] := B^1(\mathbold{Q}) \backslash B^1(\mathbold{A})$
by giving $B^1(\mathbold{Q})$
the counting measure.
We may define similarly
a product measure
$d^\times  \alpha :=  \prod_v d^\times \alpha_v$
 on $P B^\times(\mathbold{A})$
 inducing a quotient measure on
 $[PB^\times] := PB^\times(\mathbold{Q}) \backslash
PB^\times(\mathbold{A})$.

\subsection{Adelization}\label{sec:forw-reverse-adel}
Let
$R \subset B \hookrightarrow M_2(\mathbold{R})$
be an Eichler order and real embedding
giving rise to the lattice $\Gamma
= R \cap \SL_2(\mathbold{R}) < \SL_2(\mathbold{R})$.
Let $R_p = R \otimes_\mathbold{Z} \mathbold{Z}_p$
and $\hat{R} = \prod R_p \subset B(\hat{\mathbold{Q}})$.
If $\mathcal{F}(\Gamma \backslash \mathbold{H})$
is a fundamental domain for $\Gamma \backslash \mathbold{H}$,
then
\[
\left\{
  \sigma_z \left(
    \begin{smallmatrix}
      \cos \theta & \sin \theta \\
      - \sin \theta & \cos \theta 
    \end{smallmatrix}
  \right) \times \kappa_0 :
  z \in \mathcal{F}(\Gamma \backslash \mathbold{H}),
  \,
  0 \leq \theta < \pi,
  \,
  \kappa_0 \in \hat{R}^\times
\right\}
\]
is  a
fundamental domain for $[\SL_2]$.
Fix a positive even integer $k$.
The following lemma
is a consequence the strong approximation theorem.
\begin{lemma}\label{lem:adelize}
  The identity
  $F(\sigma_z \times 1) = y^{k/2} f(z)$
  for $z = x + i y \in \mathbold{H}$
  induces a bijection
  $f \leftrightarrow F$
  between the space of automorphic
  functions $f$  of weight $k$ on $\Gamma$ 
  and the space of smooth
  functions $F : B^\times_\mathbold{A} \rightarrow
  \mathbold{C}$
  satisfying
  $F(z \gamma g \kappa_\infty \kappa_0)
  = F(g) \chi_k(\kappa_\infty)$,
  where
  $z \in Z(B^\times)_\mathbold{A}$, $\gamma \in
  B^\times_\mathbold{Q}$, $g \in B^\times_\mathbold{A}$,
  $\kappa_\infty \in K_\infty \subset B_{\infty+}^\times$,
  $\kappa_0 \in \hat{R}^\times$
  and  $\chi_k : K_\infty = \SO(2) \rightarrow S^1$
  is given by
  $\left(
    \begin{smallmatrix}
      \cos \theta & \sin \theta \\
      - \sin \theta & \cos \theta 
    \end{smallmatrix}
  \right) \mapsto e^{i k \theta}$.
  If $f_1 \leftrightarrow F_1$
  and
  $f_2 \leftrightarrow F_2$,
  then
\begin{align*}
\int_{[B^1]}
\overline{F_1(g)} F_2(g) \, d^{(1)} g
&=
\frac{1}{2}
\int_{[P B^\times]}
\overline{F_1(g)} F_2(g) \, d^{\times} g \\
&=
\frac{1}{c(\Gamma)}
\int _{\Gamma \backslash \mathbold{H}}
y^k \overline{f_1(z)}
f_2(z)
\, \frac{d x \, d y}{y^2},
\end{align*}
where
$c(\Gamma)
:= 2 \pi^{-1} \zeta(2) d_B N \prod_{p \mid d_B} (1 - p^{-1})
\prod_{p \mid N} (1 + p^{-1})$.
\end{lemma}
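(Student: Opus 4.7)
The plan is to deduce both assertions from strong approximation together with a careful disintegration of the measure $d^{(1)} \alpha$ along the fundamental domain indicated just before the lemma.

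First, for the bijection. Since $B$ is indefinite, $B^1_\infty \cong \SL_2(\mathbold{R})$ is noncompact, so strong approximation applies to $B^1$: the group $B^1(\mathbold{Q})$ is dense in $B^1(\hat{\mathbold{Q}})$, and consequently $B^1(\mathbold{A}) = B^1(\mathbold{Q}) \cdot (B^1_\infty \cdot \hat{R}^1)$, where $\hat{R}^1 := \hat{R}^\times \cap B^1(\hat{\mathbold{Q}})$. Combining this with the adelic decomposition $\mathbold{Q}^\times_\mathbold{A} = \mathbold{Q}^\times \cdot \mathbold{R}^\times_+ \cdot \hat{\mathbold{Z}}^\times$ and the fact that $\det(\hat{R}^\times) = \hat{\mathbold{Z}}^\times$, I would extend to the full unit group:
\[
B^\times(\mathbold{A}) = B^\times(\mathbold{Q}) \cdot Z(B^\times)(\mathbold{A}) \cdot \bigl(B^\times_{\infty+} \cdot \hat{R}^\times\bigr).
\]
Using the Iwasawa-type factorization $B^\times_{\infty+} = Z_\infty^+ \cdot \sigma_z \cdot K_\infty$ at the archimedean place, this realizes the double coset space $B^\times(\mathbold{Q}) Z(B^\times)(\mathbold{A}) \backslash B^\times(\mathbold{A}) / \hat{R}^\times$ as $\Gamma \backslash \mathbold{H}$, where the stabilizer at $\sigma_z \times 1$ is exactly (the image of) $\Gamma$. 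The correspondence $f \leftrightarrow F$ is then forced: given $f$, set $F$ on the cell $B^\times(\mathbold{Q}) \cdot Z_\mathbold{A} \cdot (\sigma_z K_\infty \times \hat{R}^\times)$ according to the prescription $F(\sigma_z \times 1) = y^{k/2} f(z)$ extended by the transformation law, and the $\Gamma$-automorphy of $f$ guarantees well-definedness; conversely, $f(z) := y^{-k/2} F(\sigma_z \times 1)$ recovers $f$ from $F$.

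Next, for the integral identity, I would disintegrate $d^{(1)} g$ along the fundamental domain displayed before the lemma. Since $|F|^2$ is invariant under right translation by $K_\infty$ (because $F$ transforms by the unitary character $\chi_k$ there) and under right translation by $\hat{R}^\times$, the integral factors as
\[
\int_{[B^1]} |F(g)|^2 \, d^{(1)} g
= \vol(\hat{R}^1) \cdot \vol(K_\infty \cap B^1_\infty)/|\{\pm 1\}| \cdot \int_{\Gamma \backslash \mathbold{H}} |F(\sigma_z)|^2 \cdot \frac{1}{2} \frac{dx\,dy}{y^2},
\]
where the factor $\tfrac{1}{2}$ and the $|\{\pm 1\}|$ account for the archimedean Haar normalization $d^{(1)}\alpha = \frac{1}{2}\, dx\, \frac{d^\times y}{|y|}\, d\theta$ and the fact that $\theta$ ranges over $[0,\pi)$ rather than $[0,2\pi)$ in the stated fundamental domain. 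Substituting $|F(\sigma_z)|^2 = y^k |f(z)|^2$ yields an identity of the claimed form, with the constant on the right equal to $\prod_p \vol(R_p^1)$ times the factor from $K_\infty$. Expanding the local volumes via the normalizations recorded in \S\ref{sec:notation-measures}---which give $\vol(R_p^1) = \zeta_p(2)^{-1}$ for $p \nmid d_B N$, $\vol(R_p^1) = (p-1)^{-1} \zeta_p(2)^{-1}$ for $p \mid d_B$, and an Eichler-order correction $[\SL_2(\mathbold{Z}_p) : R_p^1]^{-1} \zeta_p(2)^{-1} = (p^\alpha(1+p^{-1}))^{-1}\zeta_p(2)^{-1}$ for $p^\alpha \| N$---produces exactly the reciprocal of the stated constant $c(\Gamma) = 2\pi^{-1}\zeta(2)\, d_B N \prod_{p\mid d_B}(1-p^{-1})\prod_{p\mid N}(1+p^{-1})$. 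The equality of the $B^1$- and $PB^\times$-integrals with the indicated $\tfrac{1}{2}$ then follows from the exact sequence $1 \to \{\pm 1\} \to B^1 \to PB^\times$ (whose image is everything, since $\det$ is trivial on $B^1$) and the compatibility of $d^{(1)}$ and $d^\times$ with the short exact sequence $1 \to Z \to B^\times \to PB^\times \to 1$ as fixed in \S\ref{sec:notation-measures}.

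The main obstacle will be the bookkeeping of the constant $c(\Gamma)$: all three steps---the normalization of local Haar measures on $B_p^1$, the Euler-product computation of $\vol(\hat R^1)$ across the three cases $p\nmid d_B N$, $p\mid d_B$, $p\mid N$, and the comparison of the $\theta$-integration range against the archimedean factor $\vol(\SO(2))$---must be pinned down simultaneously and shown to conspire to $c(\Gamma)^{-1}$; here the volume formulas for $\Gamma\backslash \mathbold{H}$ recorded in Remark \ref{rmk:volumes-known} serve as an independent consistency check.
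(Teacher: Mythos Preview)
Your proposal is correct and follows exactly the approach the paper indicates: the paper offers no detailed argument here, stating only that the lemma ``is a consequence [of] the strong approximation theorem,'' and your proof is the standard unpacking of that attribution---strong approximation for the bijection, disintegration of $d^{(1)}g$ along the displayed fundamental domain, and the local volume bookkeeping for $c(\Gamma)$. Your sketch in fact supplies more detail than the paper does.
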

\begin{remark}
\label{rmk:dfdfd}
One has $c(\Gamma) = \vol(\Gamma \backslash \mathbold{H})$
(see Remark \ref{rmk:volumes-known}),
or equivalently,
$\vol([B^1])
=
(1/2) \vol([PB^\times])
=1$.
\end{remark}



\subsection{Weil representation}
\label{sec:construction-general}
Let $B$ be a quaternion algebra over $\mathbold{Q}$.
Let $\GO(B)$ be the orthogonal similitude group of
the four-dimensional quadratic space
$(B,\det)$,
and $\nu : \GO(B) \rightarrow \mathbold{G}_m$ the similitude factor.
Let $H = G(\SL_2 \times \O(B)) = \{(g,h) \in \GL_2 \times \GO(B)
: \det(g) = \nu(h)\}$.
Then $H(\mathbold{A})$
acts on the space
$\mathcal{S}(B(\mathbold{A}))= \otimes'
\mathcal{S}(B_v)$
of Schwarz-Bruhat functions
via the Weil representation $\omega = \otimes \omega_v$,
characterized by
(see \cite[\S 3]{harris-kudla-1991})
\begin{align*}
  \omega_v (n(x) t(y),1)
  \varphi_v
  (\alpha)
  &= |t|_v^{4/2}
  \mathbf{e}_v(x \det(\alpha))
  \varphi_v(t \alpha) \quad \text{for }
  x \in \mathbold{Q}_v, y \in \mathbold{Q}_v^*,
  \\
  \omega_v (w,1)
  \varphi_v(\alpha) &= (-1)^{\mathbf{1}_{v|d_B}}
  \mathcal{F} \varphi_v(\alpha),  \\
  \omega_v
  \left(
    \begin{bmatrix}
      1 &  \\
      & \nu(h)
    \end{bmatrix},
    h
  \right)
  \varphi_v(\alpha)
  &= \lvert \nu(h) \rvert_v^{-4/4} \varphi_v(h^{-1} \alpha)
  \quad \text{ for } h \in \GO(B)_v.
\end{align*}
Here $\mathbf{1}_{v|d_B}$ is $1$ if $v$ divides $d_B$ and $0$
otherwise,
while
\begin{equation*}\label{eq:matrix-notation}
  n(x) := \begin{bmatrix}
    1 & x \\
    & 1
  \end{bmatrix},
  \quad
  t(y) :=
  \begin{bmatrix}
    y &  \\
    & y ^{-1} 
  \end{bmatrix},
  \quad
  w := \begin{bmatrix}
    & 1 \\
    -1 & 
  \end{bmatrix}.
\end{equation*}

\subsection{Theta correspondence}
\label{sec:shimizu-lift-subsec}
There is a morphism
$\rho : B^\times \times B^\times \rightarrow \GO(B)$
given by
$\rho(b_1,b_2)(\alpha) = b_1 \alpha b_2^{-1}$,
which surjects onto the connected component
and satisfies
$\det(b_1 b_2^{-1}) = \nu(\rho(b_1,b_2))$.
For $\varphi \in \mathcal{S}(B_\mathbold{A})$,
$g \in \GL_2(\mathbold{A})$
and $(b_1,b_2) \in (B^\times  \times B^\times)(\mathbold{A})$
with $\det(g) = \det(b_1 b_2^{-1})$,
define the theta kernel
\begin{equation*}
  \theta_\varphi(g;b_1,b_2)
  = \sum_{\alpha \in B}
  \omega(g,\rho(b_1,b_2)) \varphi(\alpha).
\end{equation*}
For fixed $b_1, b_2 \in B^\times (\mathbold{A})$
and $g' \in \GL_2(\mathbold{A})$ with $\det(g') = \det(b_1
b_2^{-1})$,
the function $\SL_2(\mathbold{A}) \ni g \mapsto \theta_\varphi(g
g')$
is left-$\SL_2(\mathbold{Q})$-invariant.
If
$F$ is a function on $\GL_2(\mathbold{Q}) \backslash
\GL_2(\mathbold{A})$
of rapid decay modulo the center,
define for $b_1,b_2 \in B^\times(\mathbold{A})$ its theta lift
\[
\Theta_\varphi(\bar{F})(b_1,b_2)
= \int_{g \in [\SL_2]}
\bar{F}(g g')
\theta_\varphi(g g';b_1,b_2) \, d^{(1)} g,
\]
where $g' \in \GL_2(\mathbold{A})$ is any element
for which $\det(g') = \det(b_1 b_2^{-1})$.

\subsection{The precise lifting for newforms}
\label{sec:prec-corr-newforms-1}
Let
notation
be as in \sec\ref{sec:overview-1}.
Let $f_B \leftrightarrow F_B$ and $f \leftrightarrow F$
via
Lemma \ref{lem:adelize}.
Define $\varphi = \otimes \varphi_v \in
\mathcal{S}(B(\mathbold{A}))$ via
$\varphi_p
= 
\vol(R_p^\times, d^\times x)^{-1}
\mathbf{1}_{R_p}$
and
$\varphi_\infty = \pi^{-1} X^k e^{- 2 \pi P}$,
where $\mathbf{1}_{R_p}$ is the characteristic function
of $R_p$,
and $X$ and $P$ are
as
in \sec\ref{sec:weighted-sums-over}.
This choice is consistent with
\cite[\S2.3]{watson-2008}.
The definitions imply:


\begin{lemma}\label{lem:theta-vs-theta}
  For $z, z_1, z_2 \in \mathbold{H}$,
  we have
  $\theta_{\varphi}(\sigma_z;\sigma_{z_1},\sigma_{z_2})
  =
  c(\Gamma ')
  y^{k/2}
  \theta_{z_1,z_2}(z).$
\end{lemma}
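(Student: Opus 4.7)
The plan is to unfold $\theta_\varphi(\sigma_z;\sigma_{z_1},\sigma_{z_2}) = \sum_{\alpha \in B(\mathbold{Q})} \omega(\sigma_z, \rho(\sigma_{z_1},\sigma_{z_2})) \varphi(\alpha)$ by computing the Weil representation factor by factor at each place, and then to recognize the answer as a scalar multiple of $\theta_{z_1,z_2}(z)$.

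The finite places are immediate: since $\sigma_z, \sigma_{z_1}, \sigma_{z_2}$ embed as trivial at finite places, $\omega_p(1, \rho_p(1,1)) \varphi_p = \varphi_p$, contributing $\vol(R_p^\times)^{-1} \mathbf{1}_{R_p}(\alpha)$. After taking the product over $p$, the relation $\bigcap_p R_p = R$ restricts the sum to $\alpha \in R$ with overall constant $\prod_p \vol(R_p^\times)^{-1}$.

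At the archimedean place I would use the Iwasawa decomposition $\sigma_z = n(x) t(y^{1/2})$ and the fact that $\nu(\rho(\sigma_{z_1},\sigma_{z_2})) = \det(\sigma_{z_1}\sigma_{z_2}^{-1}) = 1$. Applying the three formulas for $\omega_\infty$ from \S\ref{sec:construction-general} in order yields
\[
\omega_\infty(\sigma_z, \rho(\sigma_{z_1},\sigma_{z_2})) \varphi_\infty(\alpha) = y \cdot e(x \det \alpha) \cdot \varphi_\infty\bigl(y^{1/2} \sigma_{z_1}^{-1} \alpha \sigma_{z_2}\bigr).
\]
Substituting $\varphi_\infty(\beta) = \pi^{-1} X(\beta)^k e^{-2 \pi P(\beta)}$ and using the homogeneities $X(c\beta) = c X(\beta)$ and $P(c\beta) = c^2 P(\beta)$ for $c \in \mathbold{R}$ simplifies this to $\pi^{-1} y^{1+k/2} e(x \det \alpha) X(\gamma)^k e^{-2\pi y P(\gamma)}$, where $\gamma := \sigma_{z_1}^{-1} \alpha \sigma_{z_2}$.

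After the change of variables $\alpha \mapsto \gamma$ (which preserves the determinant) and grouping by $n := \det \gamma \in \mathbold{Z}$, the inner sum is $(2/y) U_k(\sigma_{z_1}^{-1} R \sigma_{z_2}; n, y)$ by definition of $U_k$. Summing on $n$ and factoring out $e(nx)$ reconstructs $\theta_{z_1,z_2}(z)$, giving
\[
\theta_\varphi(\sigma_z;\sigma_{z_1},\sigma_{z_2}) = \tfrac{2}{\pi} \Bigl(\prod_p \vol(R_p^\times)^{-1}\Bigr) y^{k/2} \theta_{z_1,z_2}(z).
\]
The main (purely bookkeeping) obstacle is identifying $(2/\pi) \prod_p \vol(R_p^\times)^{-1}$ with $c(\Gamma')$. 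This follows from the standard local volume computations for Eichler orders under the normalizations of \S\ref{sec:notation-measures}: $\vol(R_p^\times) = \zeta_p(2)^{-1}$ at $p \nmid d_B N$, with the expected corrections at primes dividing $d_B$ (maximal order in a local division algebra) and $N$ (Eichler order of prime-power level), matching exactly the factorization of $c(\Gamma')$ recorded in Lemma \ref{lem:adelize}.
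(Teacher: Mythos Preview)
Your proof is correct and is precisely the unfolding the paper has in mind when it writes ``the definitions imply''; you have simply made explicit the archimedean Weil-representation computation and the local volume bookkeeping that identifies $\tfrac{2}{\pi}\prod_p \vol(R_p^\times)^{-1}$ with $c(\Gamma')$. There is no alternative route here---the lemma is a direct comparison of two definitions, and your argument carries it out accurately.
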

\begin{remark}\label{rmk:fourier-exp-other-cusps}
  To get the Fourier expansion for $\theta_{z_1,z_2}|\tau_d(z)$
  claimed in Appendix \ref{sec:fourier-expansions},
  we use that
  $c(\Gamma ') y^{k/2} \theta_{z_1, z_2} | \tau_d(z)
  = \theta_\varphi(\tau_d \sigma_z \times
  1;\sigma_{z_1},\sigma_{z_2})
  = \theta_\varphi(\sigma_z \times
  \tau_d^{-1};\sigma_{z_1},\sigma_{z_2})$
  and apply the definition of the Weil representation
  at the finite places.
  The key calculation is that
  at a prime $p$ for which
  we may identify $B_p = M_2(\mathbold{Q}_p)$,
  we have
  \[
  \mathcal{F} 1 _{\left(
      \begin{smallmatrix}
        \mathbold{Z}_p& \mathbold{Z}_p\\
        p^\alpha  \mathbold{Z}_p &\mathbold{Z}_p
      \end{smallmatrix}
    \right)}
  = p^{-\alpha}
  1 _{\left(
      \begin{smallmatrix}
        \mathbold{Z}_p& p^{-\alpha} \mathbold{Z}_p\\
        \mathbold{Z}_p &\mathbold{Z}_p
      \end{smallmatrix}
    \right)}.
  \]
  We omit the details, since they are discussed
  in \cite{nelson-appendix-C-prasanna-notes} and \cite[\S 2.1]{watson-2008}.
\end{remark}

\begin{theorem}
  \label{thm:watson-1}
  For $b_1, b_2 \in B^\times(\mathbold{A})$,
  we have
  $\overline{F _B}(b_1) F_B(b_2)
  =
  \frac{\|F_B\|^2}{\|F\|^2}
  \Theta_\varphi(\bar{F})(b_1,b_2)$,
  where
  \[
  \|F_B\|^2 :=
  \frac{1}{2}
  \int _{
    [PB^\times]
  }
  |F_B|^2
  (g)
  \, d^\times g,
  \quad 
  \|F\|^2 :=
  \frac{1}{2}
  \int _{
    [\PGL_2]
  }
  |F|^2
  (g)
  \, d^\times g.\]
\end{theorem}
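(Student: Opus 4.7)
The plan is to proceed in three steps: establish that $\Theta_\varphi(\bar F)(b_1,b_2)$ is an automorphic form on $B^\times \times B^\times$ of the appropriate weight and level; use Shimizu's theorem together with multiplicity one on $B^\times$ to deduce proportionality to $\overline{F_B}(b_1) F_B(b_2)$; and compute the proportionality constant via a Rallis-style inner product calculation. For the automorphy, the theta kernel $\theta_\varphi$ is by construction invariant under the $H(\mathbold{Q})$-action on $\SL_2 \times \O(B)$, and the diagonal embedding $(B^\times \times B^\times) \hookrightarrow \O(B)$ via $\rho$ is defined over $\mathbold{Q}$, so $\Theta_\varphi(\bar F)$ is left-$(B^\times(\mathbold{Q}) \times B^\times(\mathbold{Q}))$-invariant. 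Its right-transformation under $K_\infty \times K_\infty$ (giving weight $(k,k)$) and $\hat R^\times \times \hat R^\times$ (giving the correct level) follows from the explicit action of the Weil representation on the chosen $\varphi_v$.

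The rigidity step invokes Shimizu's theorem, which identifies the theta lift from $\GL_2$ to $\GO(B)$ with the Jacquet--Langlands correspondence. Thus, as a function of $b_2$, $\Theta_\varphi(\bar F)(b_1,b_2)$ lies in the Jacquet--Langlands transfer $\pi_B$ of the cuspidal representation $\pi$ generated by $F$; as a function of $b_1$ it lies in the complex conjugate $\overline{\pi_B}$. Because our finite-place test function $\varphi_p = \vol(R_p^\times)^{-1}\mathbf{1}_{R_p}$ is the indicator of the Eichler order (up to normalization), the lift picks out precisely the newvector line in $\pi_B$ at each finite place. Multiplicity one on $B^\times$ then forces
\[
\Theta_\varphi(\bar F)(b_1,b_2) = c \cdot \overline{F_B}(b_1) F_B(b_2)
\]
for a scalar $c$ depending only on the normalizations of $\varphi$, $F$, $F_B$ and the measures.

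To pin down $c$, I would pair both sides against $F_B(b_1)\overline{F_B}(b_2)$ and integrate over $[PB^\times]\times [PB^\times]$: the right side becomes $c\|F_B\|^4$. For the left, interchange the $[\SL_2]$-integral defining $\Theta_\varphi$ with the $[PB^\times]^2$-integral; the resulting inner theta integral is the Shimizu lift of $|F_B|^2$ back to $\GL_2$, which by the same rigidity argument equals $\|F_B\|^2 \cdot F(\cdot)$ up to local matching constants depending only on $\varphi$. The remaining $[\SL_2]$-integral of $\bar F$ against this expression then produces $\|F\|^2$. Reading off $c\|F_B\|^4 = \|F\|^2 \|F_B\|^2$ gives $c = \|F\|^2/\|F_B\|^2$, equivalent to the stated formula. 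The principal obstacle is verifying that the local matching constants in this Rallis inner product calculation, especially the archimedean factor attached to $\varphi_\infty = \pi^{-1} X^k e^{-2\pi P}$ and the holomorphic weight-$k$ newvector $F_B$, combine to exactly $1$ with no spurious constant; this is the substantive content of \cite[Thm 1]{watson-2008}, which I would invoke rather than rederive.
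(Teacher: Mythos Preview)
Your sketch is correct in outline and is essentially how Watson's argument goes; since you ultimately invoke \cite[Thm~1]{watson-2008} for the local constants, your proposal and the paper's proof coincide. The paper itself provides even less: its entire proof is the citation to Watson, together with the remark that Watson's standing squarefree hypothesis on $N$ is not actually used in this particular result.
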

\begin{proof}
  This is \cite[Thm 1]{watson-2008}.
  Although
  Watson assumes throughout his paper that $N$ is squarefree,
  his proof of this particular result applies verbatim for general $N$.
\end{proof}

\begin{proof}[Proof of Theorem
  \ref{thm:explicit-shimizu-unadelized}]
  By Lemma \ref{lem:adelize},
  Lemma \ref{lem:theta-vs-theta}
  and
  Theorem
  \ref{thm:watson-1},
  \begin{align*}
    (y_1 y_2)^k
    \overline{f_B(z_1)}
    f_B(z_2)
    &=
    \overline{F_B(\sigma_{z_1})}
    F_B(\sigma_{z_2})
    = \Theta_{\varphi}(\overline{F})(\sigma_{z_1},\sigma_{z_2})
    \\
    &=
    \frac{\|F_B\|^2}{ \|F\|^2}
    \int _{[\SL_2]}
    \overline{F}(g)
    \theta_\varphi(g;\sigma_{z_1},\sigma_{z_2}) \, d^{(1)} g \\
    &=
    \frac{\|F_B\|^2}{ \|F\|^2}
    \frac{1}{c(\Gamma)}
    \int _{\Gamma \backslash \mathbold{H}}
    y^{k/2} \overline{f(z)}
    \theta_\varphi(\sigma_z;\sigma_{z_1},\sigma_{z_2}) \,
    \frac{d x \, d y}{y^2} \\
    &=
    C
    \int _{\Gamma \backslash \mathbold{H}}
    y^{k} \overline{f(z)}
    \theta_{z_1,z_2}(z) \,
    \frac{d x \, d y}{y^2},
    \quad
    C := \frac{\|F_B\|^2}{ \|F\|^2}
    \frac{
      c(\Gamma ')
    }{
      c(\Gamma)
    }.
    \\
  \end{align*}
  Since $f_B$ and $f$ are compatibly-normalized
  (see Definition \ref{defn:compatibly-normalized}),
  we have $C = 1$.
\end{proof}

\bibliography{refs}{}
\bibliographystyle{plain}
\end{document}